\let\mathcal\mathscr
\DeclareRobustCommand{\SkipTocEntry}[5]{}
\def\Z{{\bf Z}}
\def\C{{\bf C}}
\def\Q{{\bf Q}}
\def\P{{\bf P}}
\def\k{{\bf k}}
\newcommand{\addresseshere}{%
\enddoc@text\let\enddoc@text\relax
}
\def\av{abelian variety}
\def\lra{\longrightarrow}
\def\lraa{\lra\hskip-6.05mm \lra}
\def\llra{\hbox to 10mm{\rightarrowfill}}
\def\lllra{\hbox to 15mm{\rightarrowfill}}
\newcommand{\longtwoheadrightarrow}{} 
\DeclareRobustCommand{\longtwoheadrightarrow}{\relbar\joinrel\twoheadrightarrow}
\def\lthra{\longtwoheadrightarrow}
\def\PA{{\widehat A}}
\def\PB{{\widehat B}}
\def\PK{{\widehat K}}
\def\PE{{\widehat E}}
\def\phi{{\varphi}}
\def\wX{{\widetilde X}}
\def\wY{{\widetilde Y}}
\def\wA{{\widetilde A}}
\def\cF{\mathcal{F}}
 \def\cL{{L}}
\def\cO{\mathcal{O}}
\def\dra{\dashrightarrow}
\def\tto{\twoheadrightarrow}
\def\isom{\simeq}
\def\k{\mathbf k}
\DeclareMathOperator{\rank}{rank}
\DeclareMathOperator{\Ker}{Ker}
\DeclareMathOperator{\codim}{codim}
\DeclareMathOperator{\Pic}{Pic}
\DeclareMathOperator{\id}{id}
\DeclareMathOperator{\Gr}{Gr}
\DeclareMathOperator{\isomlra}{\stackrel{{}_{\scriptstyle\sim}}{\lra}}
\DeclareMathOperator{\isomdra}{\stackrel{{}_{\scriptstyle\sim}}{\dra}}
\newtheorem{lemm}{Lemma}[section]
\newtheorem{theo}[lemm]{Theorem}
\newtheorem{coro}[lemm]{Corollary}
\newtheorem{prop}[lemm]{Proposition}
\newtheorem{theAlph}{Theorem}
\newtheorem{theoA}{Theorem}
\newtheorem{corA}[theoA]{Corollary}
\theoremstyle{definition}
\newtheorem{defi}[lemm]{Definition}
\newtheorem{rema}[lemm]{Remark}
\newtheorem{exam}[lemm]{Example}
\newtheorem{qu}[lemm]{Question}
\theoremstyle{remark}
\newtheorem*{remark*}{Remark}
\newtheorem*{note*}{Note}
\newcommand{\set}[1]{\left\{#1\right\}}
 \newcommand{\res}[2]{\left.#1\right|_{#2}}
\def\bw#1#2{\textstyle{\bigwedge\hskip-0.9mm^{#1}}\hskip0.2mm{#2}}
\author[O.~Debarre]{Olivier Debarre}
\address{D\'epartement Math\'ematiques et Applications\\UMR CNRS 8553\\PSL Research University\\\'Ecole Normale Su\-p\'e\-rieu\-re\\45 rue d'Ulm, 75230 Paris cedex 05, France}
\email{{olivier.debarre@ens.fr}}
\urladdr{\url{http://www.math.ens.fr/~debarre/}}
\author[Z.~Jiang]{Zhi Jiang}
\address{D\'epartement de Math\'ematiques d'Orsay\\UMR CNRS 8628\\Universit\'{e} Paris-Sud\\B\^{a}timent 425, F-91405 Orsay, France}
\email{Zhi.Jiang@math.u-psud.fr}
\urladdr{\url{http://www.math.u-psud.fr/~jiang/}}
\author[M.~Lahoz]{Mart\'{\i} Lahoz}
\address{Institut de Math\'{e}matiques de Jussieu -- Paris Rive Gauche\\UMR CNRS 7586\\Universit\'{e} Paris--Diderot\\B\^{a}timent Sophie Germain, Case 7012, 75205 Paris cedex 13, France}
\email{marti.lahoz@imj-prg.fr}
\urladdr{\url{http://webusers.imj-prg.fr/~marti.lahoz/}}
\let\@wraptoccontribs\wraptoccontribs
\qquad \lowercase{with an appendix by}]{William F. Sawin}
\thanks{M.~L.~is partially supported by MTM2012-38122-C03-02.}
\keywords{Complex tori, compact K\"ahler manifolds, rational cohomology ring}
\subjclass[2010]{32J27, 32Q55, 14F45, 32Q15, 14E99}
\begin{document}
\title{Rational cohomology tori}

\begin{abstract}
We study normal compact K\"ahler spaces whose rational cohomology ring is isomorphic to that of a complex torus.
We call them rational cohomology tori.
First, we classify, up to dimension three, those with rational singularities.
Then, we give constraints on the degree of the Albanese morphism and the number of simple factors of the Albanese variety for rational cohomology tori of general type (hence projective) with rational singularities.
Their properties are related to the birational geometry of smooth projective varieties of general type, maximal Albanese dimension, and with vanishing holomorphic Euler characteristic.
We finish with the construction of  series of examples.\end{abstract}

\maketitle
 
\section*{Introduction}
\parskip=1mm

Given a compact complex manifold, one fundamental problem is to determine how much information is encoded in its underlying topological space.

Kodaira and Hirzebruch proved that any compact K\"ahler manifold of {\em odd} dimension $n$ which is homeomorphic to $\P^n$ is actually isomorphic to $\P^n$ (\cite{hk}, \cite[Theorem 1]{mor}).
A stronger property is actually conjectured:
it should be sufficient to assume that the rings $H^\bullet(X,\Z)$ and $H^\bullet(\P^n,\Z)=\Z[x]/(x^{n+1})$ are isomorphic and $c_1(T_X)>0$ to deduce that $X$ is biholomorphic to $\P^n$
(this is known in dimensions $\le 5$; \cite[Theorem 1]{fujitaPn}, \cite{vandeVen}).

In \cite[Theorem 70]{cat} (see also Theorem \ref{thm:cat}), Catanese observed that complex tori $X$ satisfy this stronger property: they can be characterized among compact K\"ahler manifolds by the fact that the cup product induces a ring isomorphism
\begin{equation}\label{eqZ}
\bw{\bullet}{H^1(X,\Z)} \isomlra H^\bullet(X,\Z).
\end{equation}
The K\"ahler assumption is essential since one can construct non-K\"ahler compact complex manifolds (hence not biholomorphic to complex tori)
 that satisfy \eqref{eqZ} (see Example \ref{exam:nonKaehler}).
If we replace \eqref{eqZ} by an isomorphism
\begin{equation}\label{eqQ}
\bw{\bullet}{H^1(X,\Q)} \isomlra H^\bullet(X,\Q)
\end{equation}
of $\Q$-algebras, Catanese asked whether this property still characterizes complex tori (\cite[Conjecture 71]{cat}).

We say that a normal compact K\"ahler space $X$ is a {\em rational cohomology torus} if it satisfies \eqref{eqQ}.
The main objective of this article is to study the geometry of these spaces: give restrictive properties and construct examples (some smooth) which are not complex tori, thereby answering negatively Catanese's question.

The Albanese morphism of a smooth rational cohomology torus $X$ is
finite (\cite[Remark 72]{cat}).
A result of Kawamata (Remark \ref{rmk:algebraic}) then says that there is a morphism $X\to Y$ which is an Iitaka fibration for $X$, such that $Y$ is algebraic and has again a finite morphism to a torus.
We prove that $Y$ is also a rational cohomology torus, but possibly singular.
This leads to the following result.

\begin{theAlph}\label{thm:Iitaka}
Let $f\colon  X\rightarrow A$ be a finite morphism from a normal compact K\"ahler space $X$ to a torus $A$.
Consider the sequence of Iitaka fibrations
\begin{equation*}
X\xrightarrow{\ I_X\ } X_1\xrightarrow{\ I_{X_1}\ }X_2\lra\cdots \lra X_{k-1}\xrightarrow{\ I_{X_{k-1}}\ } X_{k}.
\end{equation*}
Then $X$ is a rational cohomology torus if, and only if, $X_k$ is a rational cohomology torus.
Moreover,
\begin{itemize}
\item either $X_k$ is a point and we say that $X$ is an {\em Iitaka torus tower};
\item or $X_k$ is of general type  (of positive dimension).
\end{itemize}
\end{theAlph}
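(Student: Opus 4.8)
The plan is to reduce to a single step of the tower and then induct on its length. It suffices to prove the following: if $I_X\colon X\to X_1$ is the first Iitaka fibration attached to a finite morphism $f\colon X\to A$ as in the statement, then $X_1$ is again a normal compact K\"ahler space carrying a finite morphism to a quotient torus of $A$, and $X$ is a rational cohomology torus if and only if $X_1$ is. Granting this, one concludes as follows. Each $X_i$ has a finite morphism to a torus, hence is of maximal Albanese dimension and satisfies $\kappa(X_i)\ge 0$, so all the fibrations $I_{X_i}$ are defined; and whenever $I_{X_i}$ is not birational one has $\dim X_{i+1}=\kappa(X_i)<\dim X_i$, so the tower terminates at some $X_k$ for which $I_{X_k}$ is birational, that is, $\kappa(X_k)=\dim X_k$. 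When $\dim X_k>0$ this means $X_k$ is of general type, and when $\dim X_k=0$ it is a point; applying the single-step equivalence along $X\to X_1\to\cdots\to X_k$ then identifies the rational-cohomology-torus property of $X$ with that of $X_k$.

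For the single step, I would first record the geometry of $I_X$ from Kawamata's theorem (Remark \ref{rmk:algebraic}): there is a subtorus $K\subset A$, with quotient $A_1=A/K$, such that $f$ induces a finite morphism $f_1\colon X_1\to A_1$, the general fibre $F$ of $I_X$ maps finitely onto a translate of $K$, and $\kappa(F)=0$. The first point to establish is that this general fibre $F$ is itself a rational cohomology torus: being a normal space of Kodaira dimension zero and of maximal Albanese dimension (via the finite map to $K$), its Albanese morphism is birational onto the torus $\Alb(F)$ by the classification of such varieties, and one checks that this forces $\bw{\bullet}{H^1(F,\Q)}\isomlra H^\bullet(F,\Q)$. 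Thus, rationally, $I_X$ is a fibration whose fibres are cohomology tori.

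The heart of the matter is then the cohomological comparison of $X$ and $X_1$. I would study the Leray spectral sequence of $I_X$, or equivalently the complex $R(I_X)_*\Q_X$, in conjunction with the torus quotient $A\to A_1$. Since the fibres are rational cohomology tori and the fibration is governed by the subtorus $K$, the higher direct images $R^q(I_X)_*\Q_X$ should become constant after a suitable isogeny coming from the torus structure; up to that finite cover the rational cohomology of $X$ then decomposes as $H^\bullet(X_1,\Q)\otimes H^\bullet(F,\Q)$ compatibly with cup product and with the induced surjection $\Alb(X)\to\Alb(X_1)$. Tracking $H^1$ and the wedge map through this decomposition yields that $\bw{\bullet}{H^1(X,\Q)}\to H^\bullet(X,\Q)$ is an isomorphism precisely when the analogous map for $X_1$ is, which is exactly the single-step equivalence.

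I expect the main obstacle to be this last comparison: controlling the monodromy of the higher direct images and the degeneration and ring structure of the Leray spectral sequence, so as to split the cohomology as a tensor product compatible with cup products. The difficulty is twofold. First, the monodromy of $R^q(I_X)_*\Q_X$ need not be trivial before passing to a finite cover, so one must exploit the torus quotient $A\to A_1$ to trivialize it rationally and then descend the conclusion. Second, all the spaces in sight are singular normal compact K\"ahler spaces, so the Hodge-theoretic and decomposition-theorem inputs, as well as the identification of the $H^1$'s with those of the Albanese tori, must be set up in that generality. Combining the finiteness of $f$ (which makes $R(I_X)_*$ and $Rf_*$ behave as honest sheaves) with the K\"ahler structure on the $H^1$'s is what I anticipate will require the most care.
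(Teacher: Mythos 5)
Your overall skeleton is the same as the paper's: reduce to one step of the tower, invoke Kawamata's theorem (Remark \ref{rmk:algebraic}) for the structure of $I_X$, split the cohomology as a tensor product after a finite cover, and descend; your termination argument ($\kappa(X_i)\ge 0$ since $X_i$ is finite over a torus, and the dimension drops until $\kappa=\dim$ or $0$) also matches. But there is a genuine gap exactly where you place the ``heart of the matter'': you never establish the decomposition $H^\bullet(X,\Q)\cong H^\bullet(X_1,\Q)\otimes H^\bullet(F,\Q)$ compatibly with cup product and with $f^*$; you only assert that the local systems $R^q(I_X)_*\Q_X$ ``should become constant after a suitable isogeny'' and flag the monodromy, the degeneration, and the multiplicativity of the Leray spectral sequence as anticipated obstacles. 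As written, this is a plan for a proof, not a proof, and the machinery you propose (decomposition-theorem-type arguments and Hodge theory on singular compact K\"ahler spaces) is both unproved in your sketch and unnecessary.

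The missing idea is that Kawamata's theorem already gives much more than the data you extract from it, and this is what makes the step elementary. After the abelian Galois \'etale cover $\pi\colon \wX\to X$ with group $G$ \emph{induced by an \'etale cover of $A$}, the Iitaka fibration $I_{\wX}\colon \wX\to \hat{Y}$ is an \emph{analytic fiber bundle} whose fiber $\widetilde{K}$ is a torus (an \'etale cover of $K$), it is $G$-equivariant, $\hat{Y}$ is projective, and $X_1=\hat{Y}/G$. This kills your monodromy problem at the source: Leray--Hirsch applies to the bundle $I_{\wX}$ (with classes pulled back from the covering torus of $A$), giving $H^\bullet(\wX,\Q)\cong H^\bullet(\hat{Y},\Q)\otimes H^\bullet(\widetilde{K},\Q)$; the group $G$ acts trivially on $H^\bullet(\widetilde{K},\Q)$ because it acts through translations of the covering torus; and Bredon's theorem $H^\bullet(V/G,\Q)=H^\bullet(V,\Q)^G$ descends everything to the possibly singular quotients $X=\wX/G$ and $X_1=\hat{Y}/G$. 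Taking $G$-invariants then yields $H^\bullet(X,\Q)\cong H^\bullet(X_1,\Q)\otimes H^\bullet(K,\Q)$ with $f^*$ identified with $f_{X_1}^*\otimes\id$, which is precisely the single-step equivalence $(*_f)\Leftrightarrow(*_{f_{X_1}})$ — purely topological, with no spectral-sequence degeneration or Hodge theory. (Your separate step showing the general fiber $F$ is a rational cohomology torus is then subsumed: the relevant fiber upstairs is literally a torus.) One last point you elide: the single-step equivalence is naturally about the $f$-relative property \eqref{*f}, whereas the theorem is stated for the intrinsic property \eqref{*X}; bridging the two requires Proposition \ref{prop:albanese}, which should be cited when assembling the induction.
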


It is easy to construct smooth projective surfaces which are Iitaka torus towers but not complex tori (Example \ref{exam:surface}).
Since the product of two rational cohomology tori is again a rational cohomology torus, this already gives a negative answer to Catanese's question in any dimension $\ge 2$.

The next question is whether all rational cohomology tori are Iitaka torus towers.
By Theorem \ref{thm:Iitaka}, this is the same as asking whether there exist (possibly singular) projective rational cohomology tori of general type.
This reduces our problem to the algebraic category; however, the price we have to pay is that we need to deal with singular varieties.

Rational singularities turn out to be the suitable kind of singularities to work with, because they are stable under the construction of the sequence of Iitaka fibrations of Theorem~\ref{thm:Iitaka}.
Moreover, any desingularization of a projective rational cohomology torus of general type with rational singularities has maximal Albanese dimension and vanishing holomorphic Euler characteristic (Proposition \ref{cor:pg1}).
These varieties were studied in \cite{CDJ,CJ}.
Building on these results,
we give a classification,  in dimensions up to three, of rational cohomology tori with rational singularities.

\begin{theAlph}\label{thm:dim2-3} Let $X$ be a compact K\"ahler space with rational singularities.
\begin{itemize}
\item[1)] If $X$ is a surface, $X$ is a rational cohomology torus if, and only if, $X$ is an Iitaka torus tower.
\item[2)] If $X$ is a threefold, $X$ is a rational cohomology torus if, and only if,
\begin{itemize}
\item either $X$ is an Iitaka torus tower;
\item or $X$ has an \'etale cover which is a Chen--Hacon threefold ($X$ is then singular, of general type).
\end{itemize}
\item[3)]
Starting from dimension $4$, there exist smooth rational cohomology tori  that  are not Iitaka torus towers.
\end{itemize}
\end{theAlph}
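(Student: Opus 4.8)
The backbone of parts~1) and~2) is Theorem~\ref{thm:Iitaka}: running the sequence of Iitaka fibrations, $X$ is a rational cohomology torus if and only if the terminal space $X_k$ is, and $X_k$ is either a point---in which case $X$ is an Iitaka torus tower---or a projective variety of general type of positive dimension. Moreover the Iitaka fibration of a general-type variety is birational to the identity, so $\dim X_k<\dim X$ unless $X$ is already of general type (and then $X_k=X$), while rational singularities persist along the tower. Thus everything reduces to one question: \emph{for which $d\le 3$ is there a projective rational cohomology torus of general type with rational singularities of dimension $d$?} I will show the answer is ``none'' for $d\in\{1,2\}$ and ``exactly the Chen--Hacon threefolds, up to \'etale cover'' for $d=3$.

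First I rule out the low-dimensional general-type cases. In dimension~$1$ a rational cohomology torus has $b_1=2$, hence is an elliptic curve, whereas a general-type curve has genus $\ge 2$ and $b_1\ge4$; so there is no example. In dimension~$2$, suppose $Z$ were such a surface. By Proposition~\ref{cor:pg1}, a desingularization, and hence (by birational invariance of $\chi(\cO)$) a minimal model $M$ of general type, satisfies $\chi(M,\cO_M)=0$. But Noether's formula $\chi(\cO_M)=\tfrac1{12}\bigl(K_M^2+c_2(M)\bigr)$ together with $K_M^2>0$ and $c_2(M)>0$ forces $\chi(\cO_M)\ge1$, a contradiction. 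Hence for $\dim X\le2$ the space $X_k$ cannot be of general type, so it is a point and $X$ is an Iitaka torus tower; with the converse furnished by Theorem~\ref{thm:Iitaka}, this proves part~1).

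For part~2), let $X$ be a threefold. If $X$ is not of general type then $\dim X_k\le2$, so by the previous paragraph $X_k$ is a point and $X$ is an Iitaka torus tower. If $X$ is of general type then $X_k=X$, and here I would combine Proposition~\ref{cor:pg1} with the classification in \cite{CDJ,CJ} of threefolds of general type, of maximal Albanese dimension, with $\chi(\cO)=0$, to conclude that $X$ has an \'etale cover isomorphic to a Chen--Hacon threefold. For the converse I would use that a Chen--Hacon threefold is a rational cohomology torus (as established in \cite{CDJ}) and that this property is inherited by the relevant free quotients: via the transfer isomorphism $H^\bullet(X,\Q)\isom H^\bullet(\wZ,\Q)^G$ for a finite \'etale Galois cover $\wZ\to X$ with group $G$, one checks directly that the $G$-invariant ring is again the exterior algebra on $H^1(X,\Q)$.

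Finally, part~3) is an existence statement, established by construction rather than by classification. Building on the Chen--Hacon threefolds, I would form products (or suitable fiber products over a torus) of such blocks with abelian varieties and pass to a free quotient by a finite group $G$ acting without fixed points; in dimension $\ge4$ there is enough room to arrange that the quotient is \emph{smooth}, projective, of general type, and of maximal Albanese dimension, so that it is not an Iitaka torus tower. The crux---and the main obstacle throughout---is to verify that the rational cohomology ring of such a quotient is the full exterior algebra $\bw{\bullet}{H^1}$, i.e.\ to control the entire cup-product structure and not merely the Betti numbers. Concretely this amounts to a representation-theoretic condition on the $G$-action on $H^1$ guaranteeing $\bigl(\bw{\bullet}{H^1(\wZ,\Q)}\bigr)^G=\bw{\bullet}{\bigl(H^1(\wZ,\Q)^G\bigr)}$, which is exactly the delicate point isolated in the appendix; in parts~1) and~2) the analogous difficulty is absorbed into the classification input of \cite{CDJ,CJ}.
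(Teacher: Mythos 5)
Your reduction via Theorem \ref{thm:Iitaka}, the genus and Noether-formula arguments ruling out general type in dimensions $1$ and $2$, and the appeal to the structure theorem of \cite{CDJ} for the forward half of 2) all match the paper's own proof. But there are two genuine gaps. First, the converse half of 2) rests on the claim that a Chen--Hacon threefold is a rational cohomology torus ``as established in \cite{CDJ}''. This is not in \cite{CDJ} (nor anywhere prior --- the notion itself originates with Catanese's question); it is precisely Proposition \ref{prop:construction1} of the present paper, and it requires real work: writing the threefold as $(C_1\times\cdots\times C_n)/G$ with each $C_j\to E_j$ an abelian cover, pushing the reflexive differentials $\widetilde{\Omega}^p$ forward to $E_1\times\cdots\times E_n$, decomposing into character eigensheaves, and using $h^0(\omega_X)=1$ to show that every non-trivial summand contains a non-trivial torsion line bundle factor and hence has vanishing cohomology, so that $H^q(X,\widetilde{\Omega}^p_X)\isom H^q(A,\Omega^p_A)$. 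Your proposal contains no substitute for this computation; your ``transfer'' remark only handles descent along the abelian \'etale cover, which is the easy step, since that deck group acts by translations on the Albanese and hence trivially on rational cohomology.

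Second, and more seriously, your plan for 3) is to produce in dimension $\ge 4$ a free quotient that is simultaneously \emph{smooth}, \emph{of general type}, and a rational cohomology torus. No such variety exists: this is exactly Corollary \ref{cor:smoothrtgt} of the Appendix (a smooth projective variety of general type finite over an abelian variety has $(-1)^n\chi_{\rm top}>0$, while a rational cohomology torus has $\chi_{\rm top}=0$). You have also misread the Appendix: it isolates no representation-theoretic condition on group actions; it proves the non-existence result that dooms your construction. The point you are missing is that ``not an Iitaka torus tower'' does not require $X$ itself to be of general type, only the bottom $X_k$ of its tower. The paper's construction takes the smooth double cover $\hat Y$ of a (singular) Chen--Hacon threefold $Y=\hat Y/\langle\tau\rangle$, an abelian variety $K$ with an order-$2$ translation $\sigma$, and sets $X:=(\hat Y\times K)/\langle\tau\times\sigma\rangle$: this $X$ is smooth with $\kappa(X)=3<\dim X$, its Iitaka fibration is the induced map $X\to Y$, and Theorem \ref{thm:Iitaka} --- whose Leray--Hirsch mechanism in Remark \ref{rmk:algebraic} replaces any direct computation of invariant cohomology rings --- shows that $X$ is a rational cohomology torus which is not an Iitaka torus tower, since the bottom of its tower is the general-type variety $Y$.
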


Chen--Hacon threefolds were constructed in \cite[\S4, Example]{ch-Iitaka} (see also Example \ref{exam:el&ch}).
The $n$-folds we construct for item 3) have Kodaira dimension  any number in   $\set{3,\dots,n-1}$.
In Corollary \ref{cor:smoothrtgt} of the Appendix, William Sawin shows that {\em smooth} rational cohomology tori of general type do not exist (but we construct in Example \ref{exam-dim}  {\em singular} rational cohomology tori of general type in any dimension $\ge3$).

After this classification result,
we focus on giving restrictions  on 
rational cohomology tori of general type.

\begin{theAlph}\label{thm:deg-rct}
Let $X$ be a projective variety of general type with rational singularities.
Assume that $X$ is a rational cohomology torus, with Albanese morphism $a_X$.
There exists a prime number $p $ such that $p^2\mid \deg(a_X)$.
\end{theAlph}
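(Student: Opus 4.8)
We keep the notation $f=a_X\colon X\to A$ and set $n=\dim X$. The plan is to extract from the cohomology ring enough numerical and sheaf-theoretic constraints on $f$ to force the covering data of $f$ to contain a non-cyclic $p$-group. First I would record the invariants imposed by the rational cohomology torus condition. Since cup product is an isomorphism of Hodge structures $\bw{\bullet}{H^1(X,\Q)}\isomlra H^\bullet(X,\Q)$, the Hodge structure on $H^n$ is the $n$-th exterior power of that on $H^1$; as $b_1(X)=2n$ this yields $h^{n,0}(X)=1$, i.e. $p_g(X)=1$, and, together with Proposition \ref{cor:pg1}, $\dim A=\dim X=n$, maximal Albanese dimension, and $\chi(X,\omega_X)=(-1)^n\chi(X,\cO_X)=0$. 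In particular $f$ is finite; since $X$ is Cohen--Macaulay (it has rational singularities), $A$ is regular, and the fibres are $0$-dimensional, miracle flatness shows $f$ is finite and flat, so $f_*\cO_X$ is locally free of rank $d:=\deg f\ge2$ (the value $d=1$ is excluded because a variety of general type is not a torus).

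Next I would analyse the bundle $f_*\omega_X$. The trace splits off the unit, $f_*\cO_X\simeq\cO_A\oplus\cQ$ with $\cQ$ locally free of rank $d-1$, and Grothendieck duality for the finite flat morphism $f$, using $\omega_A\simeq\cO_A$, gives $f_*\omega_X\simeq\cHom(f_*\cO_X,\cO_A)\simeq\cO_A\oplus\cQ^\vee$. Because $X$ has maximal Albanese dimension, $f_*\omega_X$ is a generic-vanishing (GV) sheaf, so the Chen--Jiang decomposition \cite{CJ} applies (to a resolution $\tilde X\to X$, using $R\pi_*\omega_{\tilde X}=\omega_X$ for the rational singularities of $X$, so that $a_{\tilde X*}\omega_{\tilde X}=f_*\omega_X$) and writes it as $\bigoplus_i(\alpha_i\ot p_i^*F_i)$, where $p_i\colon A\to A_i$ is a quotient torus, $F_i$ an M-regular sheaf on $A_i$, and $\alpha_i\in\Pic^0(A)$ torsion. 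Since $\chi(A,\alpha\ot p^*F)$ vanishes as soon as $\Ker p$ is positive-dimensional, whereas an M-regular sheaf on $A$ itself has strictly positive Euler characteristic, the equality $\chi(A,f_*\omega_X)=\chi(X,\omega_X)=0$ forces every summand to come from a \emph{proper} quotient $A_i\subsetneq A$. Feeding in $h^0(X,\omega_X)=p_g(X)=1$ pins the decomposition down further: exactly one summand is the trivial line bundle $\cO_A$ (the image of the trace), and every other summand $\alpha_i\ot p_i^*F_i$ has no global sections, i.e. $\alpha_i$ restricts nontrivially to the fibre $\Ker p_i$.

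The decisive, and hardest, step is to convert this into a divisibility statement for $d=\rank f_*\omega_X=\sum_i\rank F_i$. Here I would exploit the $\cO_A$-algebra structure of $f_*\cO_X$, which the Chen--Jiang decomposition ignores: the multiplication must carry the torsion weights $\{\alpha_i^{-1}\}$ of $f_*\cO_X\simeq\cO_A\oplus\cQ^\vee\!{}^\vee$ into one another, so these weights generate a finite subgroup $G\subset\Pic^0(A)$ and $f_*\cO_X$ becomes a $G$-graded twisted group algebra over $A$. If all the $p_i$ were constant, the homogeneous pieces would be line bundles and $f$ would be the abelian étale cover attached to $G$, making $X$ a torus and contradicting general type; hence some quotients $A_i$ are positive-dimensional and $f$ is genuinely ramified. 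The plan is then to show, prime by prime, that the combination of (i) $\chi=0$, (ii) a single global section, and (iii) genuine ramification cannot be realized by a cover whose $p$-part is cyclic, since a cyclic $p$-layer would produce either an M-regular summand on all of $A$ (violating (i)) or a second trivial summand (violating (ii)). Ruling this out for every prime dividing $d$ yields, for some $p$, a rank-two elementary abelian $p$-subgroup inside the covering data, whence $p^2\mid d$.

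The main obstacle is exactly step (iii): extracting group-theoretic non-cyclicity from the sheaf decomposition. The numerical inputs ($p_g=1$, $\chi=0$, maximal Albanese dimension) and the duality together with the Chen--Jiang structure are routine once assembled; it is the passage from ``the covering data'' to a concrete finite group, and the exclusion of squarefree $d$, that carries the content. I expect the rigorous argument to require either a direct study of the cohomology support locus $V^0(A,f_*\omega_X)$ --- which by the above is $\{\hat0\}$ together with torsion translates of proper subtori avoiding the origin --- and of how the orders of the points it contains constrain $d$, or a reduction, via the Galois closure of $f$ and the classification results of \cite{CDJ,CJ}, to a minimal building block where the covering group can be computed directly, as in the Chen--Hacon examples, where it is precisely $(\Z/p)^2$.
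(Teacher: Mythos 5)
Your reduction is sound and matches the paper's own preliminaries: passing to a desingularization, Proposition \ref{cor:pg1} gives $p_g=1$, $\chi=0$, and maximal Albanese dimension, so the statement is translated into one about smooth varieties of general type with $\chi(X,\omega_X)=0$ (this is exactly how the paper deduces Theorem \ref{thm:deg-rct} from Theorem \ref{thm:deg-chi0}), and your use of the Chen--Jiang decomposition together with $\chi=0$ and $p_g=1$ correctly constrains the summands and the locus $V^0(f_*\omega_X)$.

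However, the actual content of the theorem --- the divisibility $p^2\mid\deg(a_X)$ --- is never established, and the mechanism you propose for it fails as stated. You want the torsion line bundles $\alpha_i^{-1}$ in the decomposition of $f_*\cO_X$ to generate a finite group $G\subset\Pic^0(A)$ making $f_*\cO_X$ a $G$-graded twisted group algebra; but this presupposes that the summands are line bundles indexed by group elements, as for an abelian Galois cover, whereas $a_X$ is not Galois in general and the summands $\alpha_i\ot p_i^*F_i$ are higher-rank sheaves pulled back from proper quotients $A_i$, so the algebra multiplication induces no grading by a finite group. Likewise, the ``prime by prime'' exclusion of a cyclic $p$-part is only asserted: nothing in the numerical data (i)--(iii) rules out a squarefree degree, and you acknowledge that this step is the main obstacle. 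Your closing suggestion --- reduce, via classification results, to a minimal building block where a covering group can be computed --- is indeed what is required, but it is precisely the hard part of the paper. There, an induction on dimension (using Lemma \ref{lem:chi0}.4)) reduces to \emph{minimal primitive} varieties (Definition \ref{def:primitive}); for these, Theorem \ref{thm:minimal}, whose proof requires showing that the fibrations $X\to X_{A_j}$ induced by the simple factors of $A_X$ are birationally isotrivial and then invoking Lemma \ref{lem:str}, realizes $X$ after an \'etale cover as birational to $(F_1\times\cdots\times F_m)/G$, where $F_j/G_j$ is birational to a simple abelian factor $K_j$, the projections $G\tto G_j$ are surjective, and the projections $\pi_{ij}$ to the products with two factors deleted are injective. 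Only then does the divisibility become elementary arithmetic: surjectivity gives $\deg(a_X)\mid\prod_{2\le j\le m}g_j$, injectivity gives $g_jg_k\mid\deg(a_X)$, so any prime $p\mid g_1$ divides $\deg(a_X)$, hence divides some $g_j$ with $j\ge2$, hence $p^2\mid g_1g_j\mid\deg(a_X)$. This structure theorem for minimal primitive varieties is the missing idea in your proposal, and it cannot be replaced by the twisted-group-algebra argument you sketch.
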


\noindent Moreover, if $\deg (a_X)=p^2$, the morphism $a_X$ is a $( \Z/p\Z)^2$-cover of its image (Corollary \ref{cor:extremal-rct}).

We deduce Theorem \ref{thm:deg-rct} and Corollary \ref{cor:extremal-rct} from the analogous restrictions on the degree of the Albanese morphism of a smooth projective variety $X$ of general type, maximal Albanese dimension, and $\chi(X,\omega_X)=0$ (Theorems \ref{thm:deg-chi0} and \ref{thm:extremal-chi}).
Theorem \ref{thm:deg-chi0} is probably the deepest result of this paper and a key step in its proof is the description of {\em minimal primitive varieties of general type with $\chi=0$} (Theorem \ref{thm:minimal}): 
{\em primitive} in the sense that there exist no proper subvarieties with $\chi=0$ through a general point, and {\em minimal} with respect to the degree of birational factorizations of the Albanese morphism (Definition \ref{def:primitive}).
The most difficult part of the proof of Theorem \ref{thm:deg-chi0} is to realize these varieties as Galois quotients of products of lower-dimensional varieties (Lemma \ref{lem:str}).

Continuing with the idea of giving constraints to the existence of rational cohomology tori of general type, we prove the following condition on the number of simple factors of their Albanese varieties.

\begin{theAlph}\label{thm:factors-rct}
Let $X$ be a projective variety of general type with rational singularities.
Assume that $X$ is a rational cohomology torus, with Albanese morphism $a_X\colon X\to A_X$, and let $p $ be the smallest prime divisor of $\deg (a_X)$.
Then $A_X$ has at least $p+1$ simple factors.
\end{theAlph}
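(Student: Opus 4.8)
The plan is to encode the hypothesis "$X$ is a rational cohomology torus" as a total cohomological vanishing for the non-trivial part of the finite morphism $a_X$, and then to recover the simple factors of $A_X$ from the subtori along which the resulting eigensheaves degenerate. Set $n=\dim X$. Since $X$ is of general type and a rational cohomology torus, $a_X\colon X\to A_X$ is finite and $a_X^\ast\colon H^\bullet(A_X,\Q)\to H^\bullet(X,\Q)$ is a ring isomorphism; in particular $H^\bullet(X,\cO_X)\cong H^\bullet(A_X,\cO_{A_X})$, and by Proposition \ref{cor:pg1} one has $\chi(X,\omega_X)=0$ and maximal Albanese dimension. By Theorem \ref{thm:deg-rct} we already know $p^2\mid\deg(a_X)$, so the cover is genuinely ramified and $\deg(a_X)$ is never prime; the content of the statement lies in the extremal-type situations. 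Using the structure of minimal primitive varieties (Lemma \ref{lem:str}), I would reduce to the case where, after passing to a suitable model, $a_X$ is a Galois cover with abelian group $G$, and in the extremal case $\deg(a_X)=p^2$ to $G=(\Z/p\Z)^2$ via Corollary \ref{cor:extremal-rct}. Then $a_{X\ast}\cO_X=\bigoplus_{\chi\in\widehat G}L_\chi^{-1}$ splits into character eigensheaves, with $L_0=\cO_{A_X}$.

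The next step is to translate vanishing into degeneracy. The isomorphism $H^\bullet(X,\cO_X)\cong H^\bullet(A_X,\cO_{A_X})$ is accounted for entirely by the trivial character, so $H^i(A_X,L_\chi^{-1})=0$ for all $i$ and all $\chi\neq 0$; in particular $\chi(A_X,L_\chi^{-1})=0$, hence $c_1(L_\chi)^n=0$ and the homomorphism $\varphi_{L_\chi}\colon A_X\to\widehat{A_X}$ has a positive-dimensional kernel. Let $K_\chi\subseteq A_X$ be its identity component, a non-zero abelian subvariety: each non-trivial eigensheaf degenerates along $K_\chi$, i.e. $c_1(L_\chi)$ is pulled back (up to a torsion twist, by the Chen--Jiang decomposition) from the proper quotient $A_X/K_\chi$. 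On the other hand, $\omega_X=a_X^\ast\omega_{A_X}(R)=\cO_X(R)$ is big, which forces $\bigcap_{\chi\neq 0}K_\chi$ to be finite: if all $L_\chi$ degenerated along a common positive-dimensional subtorus $K$, the whole cover would be pulled back from a cover of $A_X/K$, exhibiting an abelian fibre direction and contradicting that $X$ is of general type.

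It then remains to convert "finitely many positive-dimensional subtori $K_\chi$, compatible with the $G$-action, with trivial common intersection" into the bound $m\geq p+1$ on the number $m$ of simple factors of $A_X$. In the model case $G=(\Z/p\Z)^2$, the $p^2-1$ non-trivial characters organize into the $p+1$ lines of $\P^1(\F_p)$; each line is the character group of an intermediate $\Z/p\Z$-cover, which must ramify (otherwise it is étale and produces a torus direction, contradicting general type), and distinct lines should degenerate along independent subtori, so that $A_X$ splits off at least $p+1$ mutually non-isogenous directions. For general $\deg(a_X)$ with smallest prime divisor $p$, I would try to reduce to this configuration through the order-$p$ subgroups of the $p$-part of $G$. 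The hard part is precisely this independence and counting statement: proving that the degeneracy subtori attached to different intermediate $\Z/p\Z$-covers cannot all be absorbed by fewer than $p+1$ simple factors. This requires combining the numerical relations among the classes $c_1(L_\chi)$ coming from the abelian-cover data with the triviality of the common kernel, and the case where the $p$-Sylow of $G$ is cyclic (so that there is a single order-$p$ subgroup) looks genuinely different and will likely need an induction along the cover tower together with the divisibility input $p^2\mid\deg(a_X)$ from Theorem \ref{thm:deg-rct}.
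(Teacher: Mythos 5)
Your proposal has a genuine gap, and you name it yourself: the entire content of the theorem is the step you leave open, namely converting the degeneracy subtori $K_\chi$ into the lower bound on the number of simple factors. Nothing in your sketch supplies a mechanism for this. Note in particular that the target is \emph{not} to produce ``mutually non-isogenous directions'': simple factors are counted with multiplicity in the isogeny decomposition, and in the extremal examples (Example \ref{exam:chi0-p2}) all $p+1$ elliptic factors may well be isogenous to one another, so an argument that tries to extract non-isogenous subtori from the classes $c_1(L_\chi)$ is aiming at the wrong statement and could not succeed in general. There is also a problem earlier in the reduction: Theorem \ref{thm:minimal} (via Lemma \ref{lem:str}) makes $\widetilde X$ birational to $(F_1\times\cdots\times F_m)/G$ with $G$ a subgroup of a product of \emph{possibly non-abelian} groups $G_j$, and $a_X$ need not be Galois, let alone abelian Galois; the eigensheaf decomposition $a_{X*}\cO_X=\bigoplus_\chi L_\chi^{-1}$ into line bundles is therefore unavailable except in the extremal case $\deg(a_X)=p^2$ covered by Corollary \ref{cor:extremal-rct}. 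Finally, the cyclic $p$-Sylow difficulty you flag is real, and your proposal offers no route around it.

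For comparison, the paper's proof (Proposition \ref{prop:factors}, which gives Theorem \ref{thm:factors-rct} via Proposition \ref{cor:pg1}) never looks at $a_{X*}\cO_X$ or at first Chern classes. After the same reduction to minimal primitive varieties by induction on the dimension, it applies Theorem \ref{thm:minimal}, so that $A_X$ is isogenous to $K_1\times\cdots\times K_m$ with exactly $m$ simple factors, and $\deg(a_X)=\frac{1}{g}\prod_j g_j$, $\deg(a_{X_{B_k}})=\frac{1}{g}\prod_{j\neq k}g_j$, where $g=|G|$, $g_j=|G_j|$, and $\PB_k=\PK_1\times\cdots\times\{0_{\PK_k}\}\times\cdots\times\PK_m$. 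The quantitative replacement for your missing ``independence'' step is the Chen--Jiang decomposition theorem \cite[Theorems 1.1 and 3.5]{CJ} applied to $a_{X*}\omega_X$, together with \cite[Lemma 3.7]{CJ}: since every element of $S_{a_X}$ lies in some $\PB_j$, comparing ranks yields
\begin{equation*}
\deg(a_X)-1\ \le\ \sum_{1\leq j\leq m}\bigl(\deg(a_{X_{B_j}})-1\bigr).
\end{equation*}
Combining this with the identities $\deg(a_X)=g_k\deg(a_{X_{B_k}})$ and choosing $\deg(a_{X_{B_1}})$ maximal gives $m\deg(a_{X_{B_1}})\ge\deg(a_X)+m-1>g_1\deg(a_{X_{B_1}})$, hence $m>g_1\geq p$ (the last inequality because $g_1>1$ and, by the divisibility $g_1g_2\mid\deg(a_X)$ from the proof of Theorem \ref{thm:deg-chi0}, every prime factor of $g_1$ divides $\deg(a_X)$). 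This rank-counting argument handles all groups $G_j$ at once, with no Galois or abelian hypothesis and no case distinction on the Sylow structure, which is exactly where your approach stalls.
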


Section \ref{sec:examples} is devoted to the construction of examples.
First, we construct in Example \ref{exam:chi0-p2}, for each prime $p$,
minimal primitive varieties $X$ of general type  of dimension $p+1$  with $\chi(X,\omega_X)=0$ whose Albanese morphisms are surjective $( \Z/p\Z)^2$-covers of a product of $p+1$ elliptic curves; they are finite Galois quotients of a product of $p+1$ curves.
These examples show that the structure of primitive varieties with $\chi=0$ is much more complicated than expected in \cite{CJ} (see Remark \ref{rmk:exam2}).

We then use techniques from \cite{par} to produce (singular) rational cohomology tori of general type  in 
any dimensions $\ge 3$
(Examples \ref{exam:rt-p2} and \ref{exam-dim}).
The first of these examples shows that
the lower bound on the degree of the Albanese morphism in Theorem \ref{thm:deg-rct} is optimal, and so is the lower bound on the number of factors of the Albanese variety in Theorem \ref{thm:factors-rct}.

\smallskip

The article ends with a series of examples of non-minimal primitive fourfolds with $\chi = 0$ whose Albanese variety has $4$ simple factors and whose Albanese morphism has degree $8$.

\addtocontents{toc}{\SkipTocEntry}
\subsection*{Notation}
We work over the complex numbers.
A variety is of general type if a (hence any) desingularization is of general type.
Given a compact K\"ahler manifold $X$, we denote by $A_X$ its Albanese torus and by $a_X\colon  X\rightarrow A_X$ its Albanese morphism (see Remark \ref{rmk:albanese}).
Given a complex torus $A$, we   denote by $\PA$ its dual.
A proper morphism $ X\to Y$ between normal varieties is a {\em fibration} if it is surjective with general connected fibers; it is {\em birationally isotrivial} if the general fibers are all birationally isomorphic to a fixed variety $F$.
By \cite{bbb}, this is equivalent to saying that after a finite (Galois) base change $Y'\to Y$, the product $X\times_YY' $ is birationally isomorphic, over $Y'$, to $F\times Y'$.

\addtocontents{toc}{\SkipTocEntry}
\subsection*{Acknowledgements}
We thank Fabrizio Catanese for explaining \cite[Conjecture 72]{cat} during a talk at the IMJ-PRG--ENS algebraic geometry seminar.
This was the starting point of this article.
It is also a pleasure to thank Lie Fu, S\'andor Kov\'acs, Stefan Schreieder, and Claire Voisin for useful conversations and comments.
M.~L.~worked on this article during his stay at IMPA  (Brazil)  and he is grateful for the support he received on this occasion.

%\tableofcontents

\parskip=0mm
\section{Catanese's theorem and question}

In \cite{cat}, Catanese proved the following topological characterization of complex tori.

\begin{theo}[{\cite[Theorem 70]{cat}}]\label{thm:cat}
A compact K\"{a}hler manifold $X$ is
biholomorphic to a complex torus if, and only if, the cup product induces a ring isomorphism
\begin{equation}\label{eq2Z}
\bw{\bullet}{H^1(X, \Z)}\isomlra H^\bullet(X, \Z).
\end{equation}
\end{theo}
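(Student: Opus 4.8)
The plan is to dispatch the easy implication immediately and then concentrate on the converse, which is where the K\"ahler hypothesis does all the work. If $X$ is a complex torus $V/\Lambda$ with $n=\dim_\C X$, then $X$ is diffeomorphic to the real torus $(\R/\Z)^{2n}$, and the K\"unneth formula identifies the cup-product map $\bw{\bullet}{H^1(X,\Z)}\to H^\bullet(X,\Z)$ with the tautological isomorphism $\bw{\bullet}{\Z^{2n}}\isomlra H^\bullet((\R/\Z)^{2n},\Z)$; this direction uses nothing beyond the topology of $X$.

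For the converse, assume the cup product induces the ring isomorphism \eqref{eq2Z} and set $n=\dim_\C X$. First I would read off the numerical consequences. Since $H^1(X,\Z)$ is torsion-free of rank $b_1(X)$, the algebra $\bw{\bullet}{H^1(X,\Z)}$ is nonzero exactly in degrees $0,\dots,b_1(X)$, whereas $H^\bullet(X,\Z)$ is nonzero exactly in degrees $0,\dots,2n$; comparing top degrees forces $b_1(X)=2n$, so the Albanese torus $A_X$ has dimension $n=\dim X$. By construction of the Albanese morphism, $a_X^*\colon H^1(A_X,\Z)\isomlra H^1(X,\Z)$ is an isomorphism. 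I then assemble the commutative square whose horizontal arrows are the two cup-product isomorphisms (the torus $A_X$ on top, the hypothesis \eqref{eq2Z} for $X$ on the bottom) and whose left vertical arrow is the isomorphism $\bw{\bullet}{a_X^*}$ induced on exterior algebras by $a_X^*$. This shows that $a_X^*\colon H^\bullet(A_X,\Z)\isomlra H^\bullet(X,\Z)$ is itself a ring isomorphism.

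Next I would pin down the nature of $a_X$. In top degree, $a_X^*\colon H^{2n}(A_X,\Z)\isomlra H^{2n}(X,\Z)$ is multiplication by the topological degree of $a_X$, which must therefore equal $1$; hence $a_X$ is surjective and bimeromorphic. Upgrading this to a biholomorphism is the crux. Because $a_X^*$ is a morphism of Hodge structures, it restricts in degree $n$ to an isomorphism $H^0(A_X,\omega_{A_X})=H^{n,0}(A_X)\isomlra H^{n,0}(X)=H^0(X,\omega_X)$; writing $\eta$ for a generator of $H^0(A_X,\omega_{A_X})$, the holomorphic $n$-form $s:=a_X^*\eta$ generates $H^0(X,\omega_X)$. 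Off the exceptional locus $a_X$ is a local isomorphism, so $s$ vanishes nowhere there; thus the zero divisor $E$ of $s$ is the ramification divisor of $a_X$, it is supported on $\Exc(a_X)$, and it satisfies $[E]=c_1(K_X)$ in $H^2(X,\Z)$.

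The final step is to show $E=0$. Since $a_X$ is bimeromorphic onto the smooth variety $A_X$, the image of $\Exc(a_X)$ in $A_X$ has codimension $\ge 2$, so $a_{X*}[E]=0$. As $a_X^*$ is surjective on $H^2$ and $a_{X*}a_X^*=\id$ (degree one), writing $[E]=a_X^*\beta$ gives $\beta=a_{X*}[E]=0$, hence $[E]=0$ in $H^2(X,\R)$. But $X$ is K\"ahler: for a K\"ahler form $\kappa$, pairing the effective divisor $E$ with $\kappa^{n-1}$ computes a sum of positive volumes, so $[E]=0$ forces $E=0$. Therefore $s$ is nowhere vanishing, $da_X$ is an isomorphism at every point, and $a_X$ is an unramified proper map of degree one, i.e.\ a biholomorphism $X\isomlra A_X$ onto a torus. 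I expect the genuine obstacle to be precisely this last step: it is the only place the K\"ahler assumption is used, and indeed the conclusion fails without it (cf. Example \ref{exam:nonKaehler}).
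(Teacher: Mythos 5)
Your proof is correct, and it coincides with the paper's own argument up to and including the step where $a_X$ is shown to be bimeromorphic: both use the isomorphism $a_X^{*}$ on $H^1(\cdot,\Z)$, the resulting ring isomorphism $a_X^*\colon H^\bullet(A_X,\Z)\isomlra H^\bullet(X,\Z)$, and the top-degree comparison to get degree one. The divergence is in the final step, where bimeromorphic is upgraded to biholomorphic. The paper argues that, $X$ being K\"ahler and $a_X^*$ being an isomorphism of the whole cohomology rings, $a_X$ cannot contract any positive-dimensional subvariety (a K\"ahler class of $X$ is a pull-back from $A_X$, so a contracted subvariety would have zero volume); hence $a_X$ is finite, and a finite bimeromorphic morphism onto the normal space $A_X$ is an isomorphism. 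You instead work with the canonical bundle: you pull back the trivializing $n$-form of $A_X$, identify its zero divisor $E$ with the ramification divisor, prove $[E]=0$ using $a_{X*}a_X^*=\id$ together with the fact that the exceptional locus has image of codimension $\ge 2$ (Zariski's main theorem, normality of $A_X$), and then kill the effective divisor $E$ by pairing with $\kappa^{n-1}$. Both routes rest on the same K\"ahler positivity principle --- a nonzero effective cycle cannot be cohomologically trivial --- but the paper applies it to an arbitrary contracted subvariety and needs no auxiliary inputs, whereas yours needs Zariski's main theorem and the projection formula; in exchange, your version makes the geometry explicit ($K_X$ is trivial, $a_X$ is unramified, hence a proper \'etale map of degree one). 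One caveat: your closing remark that positivity is ``the only place the K\"ahler assumption is used'' is not accurate --- K\"ahlerness already enters in $\dim A_X=b_1(X)/2$, in the isomorphism $a_X^*$ on $H^1(\cdot,\Z)$, and in the Hodge-theoretic identification of $H^{n,0}$; in the non-K\"ahler Example \ref{exam:nonKaehler} it is these earlier steps that break down (there $h^{1,0}(M)=1$ while $b_1(M)=6$), not only the final one.
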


Let us recall the proof.
The Albanese map $a_X\colon  X\rightarrow A_X$ induces an isomorphism $a_X^{*1}\colon H^1(A_X, \Z)\isomlra H^1(X, \Z)$, and \eqref{eq2Z} then implies that $a_X^*\colon H^\bullet(A_X, \Z)\to H^\bullet(X,\Z)$ is also an isomorphism.
Set $n:=\dim (X)$;  the fact that $a_X^{*n}\colon H^{2n}(X, \Z)\isomlra H^{2n} (A_X, \Z) $ is an isomorphism implies that  $a_X$ is birational.
Moreover, since we have an isomorphism of the whole cohomology rings and $X$ is K\"ahler, $a_X$ cannot contract any subvariety of $X$ and is therefore finite.
Thus, $a_X$ is an isomorphism.

If we replace \eqref{eq2Z} by an isomorphism at the level of rational cohomology,
Catanese already observed that the Albanese morphism is still surjective and finite (\cite[Remark 72]{cat}).

\begin{defi}\label{def:rctorus}
Let $X$ be a normal compact K\"ahler space.
We say that $X$ is a {\em rational cohomology torus} if the cup product induces an isomorphism of $\Q$-algebras
\begin{equation}\tag{$*_X$}
\bw{\bullet}{H^1(X, \Q)} \isomlra H^\bullet(X, \Q).\label{*X}
\end{equation}
\end{defi}

We also give an a priori slightly different definition.

\begin{defi}\label{def:f-torus}
Let $X$ be a normal compact K\"ahler space and let $f\colon  X\rightarrow A$ be a morphism to a torus.
We say that $X$ is an {\em$f$-rational cohomology torus} if $f$ induces an isomorphism of $\Q$-algebras
\begin{equation}\tag{$*_f$}\label{*f}
f^*\colon  H^\bullet(A, \Q) \isomlra H^\bullet(X, \Q).
\end{equation}
\end{defi}

Condition \eqref{*f} certainly implies condition \eqref{*X}.
But because of singularities, the converse is not clear.
However, we show that Definitions \ref{def:rctorus} and \ref{def:f-torus} are equivalent.

\begin{prop}\label{prop:albanese}
Let $X$ be a normal compact K\"ahler space.
Assume that $X$ is a rational cohomology torus.
There exists a finite morphism $f\colon  X\rightarrow A$ onto a complex torus such that $X$ is an $f$-rational cohomology torus.
In particular,  the Deligne  Hodge structures on $H^\bullet(X) $ are pure.
\end{prop}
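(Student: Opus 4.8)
The plan is to build both the torus and the morphism by descending the Albanese morphism of a resolution of $X$. Let $\pi\colon\wX\to X$ be a resolution of singularities, with $\wX$ a compact K\"ahler manifold. Since $X$ is normal, the fibers of $\pi$ are connected, so $R^0\pi_*\Q=\Q_X$ and the Leray spectral sequence yields an injection $\pi^*\colon H^1(X,\Q)\hookrightarrow H^1(\wX,\Q)$. As $\pi^*$ is a morphism of mixed Hodge structures into the pure weight-one structure $H^1(\wX,\Q)$, strictness forces $W_0H^1(X,\Q)=0$, so $H^1(X,\Q)$ is pure of weight one and its image $V:=\pi^*H^1(X,\Q)$ is a sub-Hodge structure of $H^1(\wX,\Q)$. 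From $(*_X)$, the facts that $H^{2n}(X,\Q)\neq0$ and $H^{2n+1}(X,\Q)=0$ (with $n=\dim X$) give $\dim_\Q H^1(X,\Q)=2n$; hence $V$ has rank $2n$ and cuts out a quotient torus $q\colon A_{\wX}\thra A$ with $H^1(A,\Q)=V$ and $\dim A=n$.

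Next I would descend $g:=q\circ a_{\wX}\colon\wX\to A$ through $\pi$. For a connected fiber $F=\pi^{-1}(x)$ and any $\alpha\in V=\pi^*H^1(X,\Q)$, the class $\alpha|_F$ is pulled back along $\pi|_F\colon F\to\{x\}$ from $H^1(\{x\},\Q)=0$, hence vanishes; since $g^*$ is the inclusion $V\hookrightarrow H^1(\wX,\Q)$, the restriction $g|_F$ acts by zero on $H^1$, and a morphism from a compact connected space to a torus acting trivially on $H^1$ is constant, so $g$ contracts every fiber of $\pi$ to a point. By the rigidity lemma (using that $\pi$ is proper with connected fibers and $\pi_*\cO_{\wX}=\cO_X$), $g$ factors holomorphically as $g=f\circ\pi$ for a unique morphism $f\colon X\to A$. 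Then $\pi^*\circ f^*=g^*$ is the inclusion $V\hookrightarrow H^1(\wX,\Q)$, which is injective, so $f^*\colon H^1(A,\Q)\to H^1(X,\Q)$ is injective, hence an isomorphism since both spaces have dimension $2n$. Because $A$ is a torus we have $H^\bullet(A,\Q)=\bw{\bullet}{H^1(A,\Q)}$, and by $(*_X)$ we have $H^\bullet(X,\Q)=\bw{\bullet}{H^1(X,\Q)}$; as $f^*$ is a ring homomorphism restricting to an isomorphism on $H^1$, it is therefore a ring isomorphism $f^*\colon H^\bullet(A,\Q)\isomlra H^\bullet(X,\Q)$, i.e.\ $X$ is an $f$-rational cohomology torus.

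It remains to see that $f$ is finite, and here the K\"ahler hypothesis enters. Suppose $f$ contracted a subvariety $Z\subseteq X$ of dimension $m\geq1$ to a point. Pick a K\"ahler class $\omega$ on $X$; since $f^*$ is an isomorphism also with real coefficients, $\omega=f^*\omega'$ for some $\omega'\in H^2(A,\R)$, and then $\omega|_Z$ is pulled back along $f|_Z\colon Z\to\{f(Z)\}$ from $H^2(\{f(Z)\},\R)=0$, so $\int_Z\omega^m=0$, contradicting positivity of the K\"ahler volume. Thus $f$ is finite, and it is surjective because $f^*$ is an isomorphism in top degree. Finally, $f^*\colon H^k(A,\Q)\to H^k(X,\Q)$ is an isomorphism of mixed Hodge structures and $H^k(A,\Q)$ is pure of weight $k$; hence each $H^k(X,\Q)$ is pure, as asserted. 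I expect the main difficulty to be the Hodge-theoretic input for the singular space — ensuring $H^1(X,\Q)$ is pure so that it defines a torus, and justifying the holomorphic descent $g=f\circ\pi$ — rather than the formal passage from $H^1$ to the whole ring or the finiteness argument.
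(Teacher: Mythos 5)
Your proof is correct and follows essentially the same route as the paper's: resolve singularities, show $H^1(X,\Q)$ is pure of rank $2n$, realize $\pi^*H^1(X,\Q)$ as a sub-Hodge structure of $H^1$ of the resolution defining a quotient torus $A$ of its Albanese torus, show the induced map to $A$ contracts the fibers of the resolution, descend to $f\colon X\to A$, and conclude with the ring isomorphism and K\"ahler positivity. The differences are only cosmetic — you get injectivity of $\pi^*$ on $H^1$ from Leray and deduce purity, where the paper deduces purity from the MHS morphism $\bw{2n}{H^1(X)}\to H^{2n}(X)$ and then injectivity; you contract fibers via constancy of torus-valued maps killing $H^1$ rather than via a pulled-back K\"ahler form; and you include explicitly the finiteness/surjectivity argument that the paper defers to the discussion following the proposition.
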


\begin{proof}
Set $n:=\dim(X)$.
There is a functorial mixed $\Q$-Hodge structure on $H^\bullet(X)$ (\cite{del2,del3,hironaka}, \cite[Proposition (1.4.1)]{fujiki}) for which the
wedge product $\bw{2n}{H^1(X)}\rightarrow H^{2n}(X)$ is a morphism of mixed Hodge structures.
Since $X$ is compact, $H^{2n}(X)$ is a one-dimensional pure Hodge structure, hence $h^1(X)=2n$, $W_0 H^1(X)=0$, and $H^1(X)$ carries a pure Hodge structure.
Therefore, $H^k(X)\isom \bw{k}{H^1(X)}$ has a pure Hodge structure for each $k\in\{0,\dots, 2n\}$.

Let $\mu\colon  X'\rightarrow X$ be a resolution of singularities with $X'$ K\"ahler.
Since $H^1(X)$ carries a pure Hodge structure, the pull-back map $\mu^*\colon  H^1(X)\rightarrow H^1(X')$ is injective.
Considering the Albanese morphism $a_{X'}\colon  X'\rightarrow A_{X'}$, we note that $a_{X'}^*\colon  H^1(A_{X'}, \Q)\rightarrow H^1(X', \Q)$ is an isomorphism.
Thus $H^1(A_{X'})$ has a sub-Hodge structure which is isomorphic to $H^1(X)$.
Moreover, there exists a quotient $\pi\colon A_{X'}\rightarrow A$ of complex tori such that $g^*H^1(A)=\mu^*H^1(X)$ as sub-Hodge structures of $H^1(X')$, where $g:=\pi a_{X'}\colon  X'\rightarrow A$.
We then have $\mu^*H^k(X, \Q)=g^*H^k(A, \Q)$ as subspaces of $H^k(X', \Q)$.

We claim that $g$ contracts every fiber of $\mu$.
Otherwise, let $F$ be an irreducible closed subvariety contained in a fiber of $\mu$ and assume $\dim \bigl(g(F)\bigr)>0$.
Let $\rho\colon  F'\rightarrow F$ be a desingularization with $F'$ K\"ahler and let $t\colon  F'\rightarrow F \hookrightarrow X'$ be the composition.
Let $\omega\in g^*H^2(A, \C)\subset H^2(X', \C)$ be the pullback of a K\"ahler form on $A$.
Since $\dim\bigl(g t (F')\bigr)>0$, the form $t^*\omega_X\in H^2(F', \C)$ is non-trivial.
This is a contradiction, since $\omega\in \mu^*H^2(X, \C)$ and $ \mu t (F')$ is a point.

Therefore, $g$ contracts every fiber of $\mu$.
Moreover, $\mu$ is birational and $X$ is normal, hence $\mu_*\cO_{X'}=\cO_X$.
Thus the morphism $g\colon  X'\rightarrow A$ factors through a morphism $f\colon  X\rightarrow A$ and $X$ is an $f$-rational cohomology torus.
\end{proof}

\begin{rema}\label{rmk:albanese}
If $X$ is a rational cohomology torus, Proposition \ref{prop:albanese} provides a finite morphism $f\colon  X\rightarrow A$ which has a universal property (up to translations) for morphisms from $X$ to complex tori.
We will call $f$ the \emph{Albanese morphism} and $A$ the \emph{Albanese variety} of $X$.
\end{rema}

The hypothesis $X$ K\"ahler in Theorem \ref{thm:cat} is essential:
the topological characterization of tori is not true without this hypothesis, as we show in the following example whose origins can be traced to \cite[p.\ 163]{bla} (see also \cite[Example 5.1]{ueno2}).

\begin{exam} [A non-K\"ahler integral cohomology torus] \label{exam:nonKaehler}
Let $E$ be an elliptic curve,
let $\cL$ be a very ample line bundle on $E$,
and let $\varphi$ and $\psi$ be holomorphic sections of $\cL$ with no common zeroes on $E$.
Set
\[
J_1:=\left(\begin{smallmatrix}
\;1\;&\;0\;\\&\\\;0\;&\;1\;
\end{smallmatrix}\right),\qquad
J_2:=\left(\begin{smallmatrix}
\;0\;&\;1\;\\&\\-1&\;0\;
\end{smallmatrix}\right),\qquad
J_3:=\left(\begin{smallmatrix}
0&\sqrt{-1}\\\sqrt{-1}&0
\end{smallmatrix}\right),\qquad
J_4:=\left(\begin{smallmatrix}
\sqrt{-1}&0\\0&-\sqrt{-1}
\end{smallmatrix}\right).
\]
Since $\det\big(\sum_{i=1}^4 \lambda_i J_i\big)=\sum_{i=1}^4\lambda_i^2$, the group
\[\Gamma:=\sum_{i=1}^4\Z J_i\left(\begin{matrix}
\varphi\\ \psi
\end{matrix}\right)
\]
is a relative lattice in the rank-$2$ vector bundle $\mathcal{V}:=\cL\oplus\cL$, over $E$.
The quotient $M:=\mathcal{V}/\Gamma$ is a complex manifold with a surjective holomorphic map $\pi\colon M\to E$.
By construction, $\pi$ is smooth, each fiber of $\pi$ is a complex $2$-dimensional torus, and its relative canonical bundle is
$
\omega_{M/E}=\pi^*\cL^{-2}$. 

One checks that $M$ is diffeomorphic to a real torus, hence $H^\bullet(M, \Z)=\bw{\bullet}{ H^1(M,\Z)}$,
but $M$ is not a complex torus, since it is not K\"ahler:
if it were, $\pi_*\omega_{M/E}$ would be semipositive (\cite[Theorem (2.7)]{fujita}).
\end{exam}

Answering a question  of 
 Schreieder, we would like to point out that the hypothesis that $\bw{\bullet}{H^1(X, \Q)} \isom H^\bullet(X, \Q)$ is a ring isomorphism is crucial 
to get a finite morphism to an abelian variety:
if we only assume that the Hodge numbers of $X$ are  those of a torus, the Albanese morphism is not even necessarily finite as we show in the following example.

\begin{exam}[A surface with the Hodge numbers of a torus but non-finite Albanese map]
 Let $\rho \colon D\to C$ be a double \'etale cover of smooth projective curves, where $C$ has genus $g\geq 2$, and let $\tau$ be the associated involution of $D$.
 Let $E$ be an elliptic curve and let $\sigma$ be the involution of $E$ given by multiplication by $-1$, with quotient morphism $\rho'\colon E\to \P^1$.
 Let $S:=(D\times E)/ \langle \tau \times \sigma\rangle$  be the diagonal quotient.
The surface $S$ is smooth and  its Hodge numbers  are
\[
\begin{array}{ccccc}
	&&1\\
	&2&&2&\\
	1&&4&&1
\end{array}
\]
Indeed,  we have 
$$
\begin{array}{rcll}
	 \rho_*\cO_D&=&\cO_C\oplus \cL^{-1}, &\text{where } \cL\in \Pic^0(C) \text{ is a 2-torsion point,}\\
	  \rho'_*\cO_E&=&\cO_{\P^1}\oplus \cO_{\P^1}(-2).&
\end{array}
$$
If we denote by $g\colon S\to C\times \P^1$ the natural double covering,  we have
\begin{eqnarray*}
	g_*\cO_S&=&\cO_{C\times \P^1}\oplus \bigl(\cL^{-1}\boxtimes\cO_{\P^1}(-2)\bigr),\\
	g_*\Omega^1_S&=& \bigl(\omega_C\boxtimes\cO_{\P^1} \bigr)\oplus  \bigl(\omega_C\otimes \cL)\boxtimes \cO_{\P^1}(-2) \bigr)\oplus\bigl(\cO_C\boxtimes\omega_{\P^1}\bigr)\oplus \bigl(\cL^{-1}\boxtimes\cO_{\P^1}\bigr), \\
	g_*\omega_S&=&\omega_{C\times \P^1}\oplus \bigl((\omega_{C}\otimes\cL) \boxtimes\cO_{\P^1}\bigr).  
\end{eqnarray*}
Note that the Albanese variety of $S$ and the Jacobian $J(C)$ are isogenous and that the Albanese morphism of $S$ contracts the elliptic curves $E$ that are the fibers of $S\to C$.
\end{exam}

\subsection{Iitaka torus towers}
By Proposition \ref{prop:albanese}, studying rational cohomology tori is equivalent to studying $f$-rational cohomology tori.
For an $f$-rational cohomology torus $X$, the property \eqref{*f} implies, since $X$ is K\"ahler, that $f$ is finite and surjective.
A theorem of Kawamata describes Iitaka fibrations for varieties with a finite morphism to a torus.

\begin{rema}[Reduction to algebraic varieties]\label{rmk:algebraic}
Let $X$ be a normal compact complex space and let $f\colon  X \rightarrow A$ be a finite morphism to a torus.
By \cite[Theorem 23]{kaw}, there are
\begin{itemize}
\item an abelian Galois \'etale cover $\pi\colon  \wX\rightarrow X$ with group $G$ induced by an \'etale cover of $A$,
\item a subtorus $K$ of $A$,
\item a normal projective variety $\hat{Y}$ of general type,
\end{itemize}
and a commutative diagram
\begin{equation}\label{eqn:diag1}
\xymatrix{
\wX\ar[r]^{\pi}\ar[d]_{I_{\wX}} & X\ar[r]^{f}\ar[d]_{I_X} & A\ar[d] \\
\hat{Y}\ar[r] & Y \ar[r]^{f_Y} & A/K }
\end{equation}
with the following properties:
\begin{itemize}
\item $I_X$ is the Stein factorization of the composition $X\rightarrow A\rightarrow A/K$ and is an Iitaka fibration of $X$;
\item $I_{\wX}$ is an analytic fibre bundle with fiber an \'etale cover $\widetilde{K}$ of $K$, and is also the Stein factorization of $I_X\pi\colon \wX\rightarrow Y$.
Hence, there is a natural $G$-action on $\hat{Y}$ which may not be faithful,
$I_{\wX}$ is $G$-equivariant, and $Y=\hat{Y}/G$.
\end{itemize}

For any finite group $G$ acting on a irreducible projective variety $V$, we have $H^\bullet(V, \Q)^G=H^\bullet(V/G, \Q)$ (\cite[Chapter III, Theorem 7.2]{bre}).
Thus,
\begin{eqnarray*}
H^\bullet(X, \Q)&=& H^\bullet(\wX, \Q)^G\\\nonumber
&=& \bigl(H^\bullet(\hat{Y}, \Q)\otimes H^\bullet(\widetilde{K}, \Q)\bigr)^G\\\nonumber
&=& H^\bullet(\hat{Y}, \Q)^G\otimes H^\bullet(K, \Q)\\\nonumber
&=& H^\bullet(Y, \Q)\otimes H^\bullet(K, \Q),
\end{eqnarray*}
where the second equality holds by the Leray--Hirsch theorem and the third equality holds because $G$ acts trivially on $H^\bullet(\widetilde{K}, \Q)$,
which is isomorphic to $H^\bullet(K, \Q)$.
Thus,
$(*_f)$ holds if, and only if, $(*_{f_Y})$ holds.
In particular, this allows to reduce the study of property $(*_f)$ to algebraic varieties.
\end{rema}

The following lemma, which implies Theorem \ref{thm:Iitaka} in the introduction, is an easy consequence of the previous remark.

\begin{lemm}\label{lem:char}
Let $f\colon  X\rightarrow A$ be a finite morphism from a normal compact complex space to a torus.
Let
\begin{equation}\label{eqn:ITT}
X\xrightarrow{\ I_X\ } X_1\xrightarrow{\ I_{X_1}\ }X_2\lra\cdots \lra X_{k-1}\xrightarrow{\ I_{X_{k-1}}\ } X_{k}
\end{equation}
be the tower of Iitaka fibrations as in diagram \eqref{eqn:diag1}, with morphisms $f_i\colon X_i\to A_i$ to quotient tori of $A$ and where $X_k$ is of general type or a point.
Then $X$ is an $f$-rational cohomology torus if, and only if, $X_k$ is an $f_k$-rational cohomology torus.
In particular, if $X_{k}$ is a point, $X$ is an $f$-rational cohomology torus.
\end{lemm}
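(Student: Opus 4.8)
The plan is to reduce the whole statement to the single-step equivalence already contained in Remark~\ref{rmk:algebraic} and then propagate it along the tower by induction. Write $X_0:=X$ and $f_0:=f$, so that for each $i\in\{0,\dots,k-1\}$ we have a finite morphism $f_i\colon X_i\to A_i$ from a normal compact complex space to a torus, whose Iitaka fibration $I_{X_i}\colon X_i\to X_{i+1}$ is, by the construction of \eqref{eqn:ITT}, an instance of Kawamata's diagram \eqref{eqn:diag1}. The first thing I would verify is that the construction genuinely iterates within the right category: the target $X_{i+1}$ appearing in \eqref{eqn:diag1} is the normal projective variety $\hat Y/G$, hence again a normal compact complex space, and the induced $f_{i+1}\colon X_{i+1}\to A_{i+1}$ (the map $f_Y$ onto a quotient torus of $A_i$) is finite, being produced by a Stein factorization. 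Thus the pair $(X_i,f_i)$ satisfies the hypotheses of Remark~\ref{rmk:algebraic} at every level.

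Next I would apply the conclusion of Remark~\ref{rmk:algebraic} to $f_i\colon X_i\to A_i$ for each $i$: it asserts precisely that \eqref{*f} holds for $f_i$ if, and only if, it holds for $f_{i+1}$, that is, $X_i$ is an $f_i$-rational cohomology torus exactly when $X_{i+1}$ is an $f_{i+1}$-rational cohomology torus. A straightforward induction on $i$, composing these $k$ equivalences, then yields that $X=X_0$ is an $f$-rational cohomology torus if, and only if, $X_k$ is an $f_k$-rational cohomology torus. This is the main assertion of the lemma.

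For the final clause, suppose $X_k$ is a point. The finite morphisms in the tower are surjective onto the respective quotient tori, so $A_k$ has dimension $\dim X_k=0$ and is a point; hence both $H^\bullet(A_k,\Q)$ and $H^\bullet(X_k,\Q)$ reduce to $\Q$ in degree $0$, the map $f_k^*$ is trivially an isomorphism of $\Q$-algebras, and $X_k$ is an $f_k$-rational cohomology torus. By the equivalence just proved, $X$ is then an $f$-rational cohomology torus.

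The genuine content of this circle of ideas lies entirely in Remark~\ref{rmk:algebraic}, where the K\"unneth-type splitting $H^\bullet(X_i,\Q)\cong H^\bullet(X_{i+1},\Q)\otimes H^\bullet(K,\Q)$ is obtained from the $G$-invariant cohomology identity, the Leray--Hirsch theorem applied to the fibre bundle $I_{\wX}$, and the triviality of the $G$-action on $H^\bullet(\widetilde{K},\Q)$. Granting that input, the only point that requires care in the lemma itself is the verification that each stage of the tower is literally an instance of \eqref{eqn:diag1}, so that the single-step equivalence can be invoked uniformly; everything else is a formal composition of equivalences, and I anticipate no serious obstacle.
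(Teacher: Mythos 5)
Your proposal is correct and takes essentially the same approach as the paper, which introduces the lemma as ``an easy consequence of the previous remark'': one simply iterates the single-step equivalence $(*_{f_i})\Leftrightarrow(*_{f_{i+1}})$ of Remark~\ref{rmk:algebraic} along the tower \eqref{eqn:ITT}. Your treatment of the final clause --- surjectivity of the finite maps $f_i$ onto the quotient tori forces $A_k$ to be a point when $X_k$ is, so $(*_{f_k})$ holds trivially --- is also the intended argument.
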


\begin{defi}\label{def:ITT}
We say that $X$ is an \emph{Iitaka torus tower} if, in \eqref{eqn:ITT}, $X_k$ is a point.
\end{defi}

We give an example of an Iitaka torus tower which is not a torus.

\begin{exam}[An Iitaka torus tower which is not a torus]\label{exam:surface}
Let $\rho\colon  C\rightarrow E$ be a double cover of smooth projective curves, where $C$ has genus $g\geq 2$ and $E$ is an elliptic curve.
Let $\tau$ be the corresponding involution on $C$.
Let $E'\rightarrow E$ be a degree-$2$ \'etale cover of elliptic curves and let $\sigma$ be the corresponding involution on $E'$.
Let $X$ be the smooth surface $(C\times E')/\langle \tau\times\sigma\rangle$.
Then $X$ is an Iitaka torus tower but has Kodaira dimension 1, hence is not a torus.
\end{exam}

The answer to Catanese's original conjecture \cite[Conjecture 70]{cat} is therefore negative.
Nevertheless, we may still ask the following question.

\begin{qu}\label{que:qu}
Is a complex compact K\"{a}hler manifold
which is a rational cohomology torus always an Iitaka torus tower?
\end{qu}

To answer  (negatively)  this question,    we study   the variety $X_k$ of Lemma \ref{lem:char}, which is a possibly singular projective variety.

\subsection{Rational singularities}
Recall the following classical definition.

\begin{defi}
Let $X$ be a complex compact space and let $\mu\colon  X'\rightarrow X$ be a desingularization.
We say that $X$ has {\em rational singularities} if $R^i\mu_*\cO_{X'}=0$ for all $i>0$ and $\mu_*\cO_{X'}=\cO_X$ (equivalently, $X$ is normal).
\end{defi}

The following lemma explains the reason why we work with rational singularities.

\begin{lemm}\label{lem:rational}
Let $f\colon  X\tto A$ be a finite and surjective morphism from a projective variety $X$ with rational singularities to an abelian variety.
Consider a quotient $A\tto B$ of abelian varieties.
If the composition $X\tto A\tto B$ factors through a finite morphism $Y\rightarrow B$ with $Y$ normal, $Y$ has rational singularities.
\end{lemm}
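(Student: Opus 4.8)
The plan is to reduce the statement to the elementary fact that a normal variety receiving a finite surjective morphism from a variety with rational singularities again has rational singularities, the point being to \emph{replace the fibration in play by a finite morphism}. Write $\pi\colon A\tto B$ for the given quotient, with connected kernel $K$, so that $\pi$ is a smooth surjective homomorphism whose fibers are translates of the abelian variety $K$. Denote by $h\colon Y\to B$ the given finite morphism and by $g\colon X\to Y$ the factorization of $\pi f\colon X\tto A\tto B$ through $Y$; since $\pi f=hg$ is surjective and $h$ is finite, both $h$ and $g$ are surjective. The difficulty is that the fibers of $g$ are finite covers of $K$, an abelian variety with \emph{nonvanishing} higher cohomology; hence one cannot compare a resolution of $Y$ with a resolution of $X$ directly through $g$, since the Leray spectral sequence of $g$ carries the cohomology of $K$.

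To circumvent this, I would make the map finite by base change. Form the fiber product $Y_A:=Y\times_B A$, with projections $p_Y\colon Y_A\to Y$ and $p_A\colon Y_A\to A$. As $p_Y$ is the base change of the smooth surjection $\pi$, it is smooth and surjective with fibers isomorphic to $K$; in particular $Y_A$ is normal (normality ascends along smooth morphisms) and, being smooth over the variety $Y$ with irreducible fibers, $Y_A$ is irreducible of dimension $\dim A$. Rational singularities descend from $Y_A$ to $Y$: choosing a resolution $\nu\colon Y'\to Y$, its base change $\nu_A\colon Y'\times_Y Y_A\to Y_A$ is again a resolution, and flat base change along $p_Y$ gives $p_Y^*R^i\nu_*\cO_{Y'}\isom R^i\nu_{A*}\cO_{Y'\times_YY_A}$; faithful flatness of $p_Y$ then lets us deduce $R^i\nu_*\cO_{Y'}=0$ for $i>0$ from the corresponding vanishing on $Y_A$ (and $\nu_*\cO_{Y'}=\cO_Y$ since $Y$ is normal). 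Thus it suffices to prove that $Y_A$ has rational singularities.

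Now I produce a finite surjective morphism onto $Y_A$ from $X$. Because $\pi f=hg$, the pair $(g,f)$ defines a morphism $\gamma:=(g,f)\colon X\to Y\times_B A=Y_A$ with $p_A\gamma=f$ and $p_Y\gamma=g$. As $X$ is proper and $Y_A$ is separated, $\gamma$ is proper; since $p_A\gamma=f$ has finite fibers, so does $\gamma$, whence $\gamma$ is finite. It is surjective: its image is closed and maps onto $A$ through the finite morphism $p_A$ (as $f$ is surjective), so $\gamma(X)$ has dimension $\dim A=\dim Y_A$ and therefore equals the irreducible variety $Y_A$.

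Finally, $\gamma\colon X\to Y_A$ is a finite surjective morphism from the variety $X$, which has rational singularities, onto the normal variety $Y_A$. In characteristic $0$ the normalized trace splits the inclusion $\cO_{Y_A}\hookrightarrow\gamma_*\cO_X=R\gamma_*\cO_X$ (here $R\gamma_*=\gamma_*$ since $\gamma$ is finite), so $\cO_{Y_A}\to R\gamma_*\cO_X$ admits a left inverse in the derived category. By Kov\'acs's splitting criterion for rational singularities, $Y_A$ has rational singularities, and by the descent of the second paragraph so does $Y$. I expect the construction of $\gamma$ to be the crux of the argument: it is precisely what converts the badly-behaved fibration $g$, whose fibers are covers of the abelian variety $K$, into a finite morphism to which the trace/splitting criterion applies.
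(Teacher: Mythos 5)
Your proof is correct and follows essentially the same route as the paper's: both pass to the fiber product $Y\times_B A$, map $X$ finitely onto it (the paper onto its image, which is a component; you onto all of it after checking irreducibility), apply Kov\'acs's splitting criterion for rational singularities via the trace of the finite surjective map, and then descend rational singularities to $Y$ along the smooth projection $Y\times_B A\to Y$. The only difference is expository: you spell out the irreducibility of $Y\times_B A$ and the faithfully flat descent step, which the paper leaves implicit.
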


In particular, the lemma applies when $X\to Y\to B$ is the Stein factorization of $X\to B$.

\begin{proof}
Since $f\colon  X\rightarrow A$ is finite, so is the induced morphism $g\colon X\rightarrow Y\times_BA$.
Since $Y\rightarrow B$ is finite and surjective, so is $ Y\times_BA\to A$, hence the image of $g$ is a component $\wY$ of $Y\times_BA$.
By \cite[Theorem~1]{kov-rat}, since $X$ has rational singularities, so has $\wY$.
Finally, since $A\to B$ is smooth, so is $\wY\hookrightarrow Y\times_BA\to Y$.
It follows that $Y$ has rational singularities.
\end{proof}

It follows that if $X$ has rational singularities, all the $X_i$ in the tower \eqref{eqn:ITT} have rational singularities.
Thus, in order to answer Question \ref{que:qu}, it suffices to answer the following.

\begin{qu}\label{que:qu1}
Can a projective variety with rational singularities which is a rational cohomology torus be of general type?
\end{qu}

To answer   (positively) this question, we first  prove that any desingularization must satisfy very strong numerical properties.

\begin{lemm}\label{lem:Gr0}
Let $X$ be a projective variety with
rational singularities.
For each $k$, we have an isomorphism
\[H^k(X, \cO_X)\isom \mathrm{Gr}_F^0H^k(X),\]
where $F^{\bullet}$ is the Hodge filtration for Deligne's mixed Hodge structure on $H^k(X)$.
\end{lemm}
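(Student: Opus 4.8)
The plan is to identify $\mathrm{Gr}_F^0 H^k(X)$ with a graded piece of the Du Bois (filtered de Rham) complex, and then exploit the fact that rational singularities are Du Bois. Recall that for any complex projective variety $X$ there is the Du Bois complex $\underline{\Omega}_X^\bullet\in D^b_{\mathrm{coh}}(X)$, functorial and equipped with a filtration $F$, built from Deligne's theory through a (cubical) hyperresolution. Its defining property is that $\mathbb{H}^k(X,\underline{\Omega}_X^\bullet)$ recovers Deligne's mixed Hodge structure on $H^k(X)$ together with its Hodge filtration; concretely, writing $\underline{\Omega}_X^0:=\mathrm{Gr}_F^0\underline{\Omega}_X^\bullet$ for the lowest graded piece, one has a canonical isomorphism
\[
\mathrm{Gr}_F^0 H^k(X)\;\isom\;\mathbb{H}^k(X,\underline{\Omega}_X^0)
\]
for every $k$. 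Thus the lemma is equivalent to the statement $\mathbb{H}^k(X,\underline{\Omega}_X^0)\isom H^k(X,\cO_X)$.

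There is a natural morphism $\cO_X\to\underline{\Omega}_X^0$ in $D^b_{\mathrm{coh}}(X)$, and by definition $X$ has Du Bois singularities exactly when this morphism is an isomorphism. The key input I would invoke is the theorem (Saito, via mixed Hodge modules, and Kovács, resolving the relevant Kollár--Steenbrink conjecture) that a variety with rational singularities has Du Bois singularities. Granting this, $\cO_X\isomlra\underline{\Omega}_X^0$ in the derived category, hence
\[
\mathbb{H}^k(X,\underline{\Omega}_X^0)\;\isom\;\mathbb{H}^k(X,\cO_X)\;=\;H^k(X,\cO_X),
\]
and combining with the previous display proves the lemma.

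The main obstacle is precisely the implication ``rational $\Rightarrow$ Du Bois'': this is the one genuinely deep ingredient, and I would cite it rather than reprove it. If instead one wanted a proof avoiding the Du Bois formalism, the natural alternative is to fix a resolution $\mu\colon X'\to X$ and run the Mayer--Vietoris descent sequence of mixed Hodge structures attached to the blow-up square with center $\Sing(X)$; since morphisms of mixed Hodge structures are strict with respect to $F$, the functor $\mathrm{Gr}_F^0$ is exact and can be applied termwise, and on the smooth total space $X'$ one has $\mathrm{Gr}_F^0 H^k(X')=H^k(X',\cO_{X'})$, controlled through $R\mu_*\cO_{X'}=\cO_X$ (the rational-singularities hypothesis). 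However, this route stalls because $\Sing(X)$ and the exceptional divisor need not have rational singularities, so the identification $\mathrm{Gr}_F^0=H^\bullet(-,\cO)$ would already have to be known for them --- which is exactly the Du Bois statement one is trying to circumvent. For this reason I expect the derived-category argument above to be the cleanest path.
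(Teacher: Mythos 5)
Your proposal is correct and is essentially the paper's own proof: both reduce the statement to the quasi-isomorphism $\cO_X\isom\underline{\Omega}_X^0$ via the theorem that rational singularities are Du Bois (Kov\'acs), and then identify $\mathbb{H}^k(X,\underline{\Omega}_X^0)$ with $\mathrm{Gr}_F^0H^k(X)$. The only cosmetic difference is that the paper makes explicit the ingredient you package as the ``defining property'' of the Du Bois complex, namely the $E_1$-degeneration of the spectral sequence $E_1^{p,q}=\mathbb{H}^q(X,\underline{\Omega}_X^p)\Rightarrow H^{p+q}(X,\C)$ abutting to Deligne's Hodge filtration, which is what lets one pass from the graded piece of the complex to the graded piece of the cohomology.
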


\begin{proof}
By \cite[Theorem S]{kov-DB},
rational singularities are Du Bois.
If $\underline{\Omega}_X^\bullet$ is the Deligne-Du Bois complex of $X$ (\cite[Definition~7.34]{ps}), this means that $\underline{\Omega}_X^0$ is quasi-isomorphic to $\cO_X$.
By Deligne's theorem (\cite[Sections 8.1, 8.2, 9.3]{del3}, \cite[4.2.4]{ks}), the spectral sequence $E_1^{p,q}=\mathbb{H}^q(X, \underline{\Omega}_X^p)\Rightarrow H^{p+q}(X, \C)$ degenerates at $E_1$ and abuts to the Hodge filtration of Deligne's mixed Hodge structure.
Thus, we have $H^k(X, \cO_X)\isom H^k(X, \underline{\Omega}_X^0)=\mathrm{Gr}_F^0H^k(X)$.
\end{proof}

\begin{prop}\label{prop:sing-albanese}\label{cor:pg1}
Let $X$ be a projective variety with rational singularities.
Assume that $X$ is a rational cohomology torus.
Let $\mu\colon X'\rightarrow X$ be a desingularization.
\begin{itemize}
\item[1)] We have $h^k(X', \omega_{X'})=h^k(X, \omega_X)=\binom{n}{k}$; in particular, $\chi(X', \omega_{X'})=0$.
\item[2)] \label{prop:sing-albanese2} The Albanese morphism $a_{X'}\colon X'\rightarrow A_{X'}$ factors through $\mu$ and the induced morphism $X\to A_{X'}$ is the Albanese morphism of $X$ in the sense of Remark \ref{rmk:albanese}.
\end{itemize}
\end{prop}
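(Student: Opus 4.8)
The plan is to derive both statements from the purity of the Hodge structures on $H^\bullet(X)$ established in Proposition \ref{prop:albanese}, together with Lemma \ref{lem:Gr0}, and to handle 2) by adapting the contraction argument already used in the proof of Proposition \ref{prop:albanese}. For 1), set $n:=\dim(X)=\dim(X')$. Serre duality on the smooth variety $X'$ gives $h^k(X',\omega_{X'})=h^{n-k}(X',\cO_{X'})$. Since $X$ has rational singularities, $R^i\mu_*\cO_{X'}=0$ for $i>0$ and $\mu_*\cO_{X'}=\cO_X$, so the Leray spectral sequence yields $H^j(X',\cO_{X'})\isom H^j(X,\cO_X)$ for every $j$; in particular $h^{n-k}(X',\cO_{X'})=h^{n-k}(X,\cO_X)$. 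By Lemma \ref{lem:Gr0}, $H^{n-k}(X,\cO_X)\isom\mathrm{Gr}_F^0H^{n-k}(X)$, and by Proposition \ref{prop:albanese} the space $H^{n-k}(X)\isom\bw{n-k}{H^1(X)}$ carries a pure Hodge structure of weight $n-k$ with $H^1(X)=H^{1,0}\oplus H^{0,1}$ and $h^{1,0}=h^{0,1}=n$. Hence $\mathrm{Gr}_F^0H^{n-k}(X)=\bw{n-k}{H^{0,1}}$ has dimension $\binom{n}{n-k}=\binom{n}{k}$, giving $h^k(X',\omega_{X'})=\binom{n}{k}$; Grothendieck duality on the Cohen--Macaulay variety $X$ likewise gives $h^k(X,\omega_X)=h^{n-k}(X,\cO_X)=\binom{n}{k}$, and then $\chi(X',\omega_{X'})=\sum_k(-1)^k\binom{n}{k}=0$.

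For 2), the case $k=n-1$ of 1) gives the irregularity $q(X')=h^1(X',\cO_{X'})=h^1(X,\cO_X)=n$, so $h^1(X',\Q)=2n=h^1(X,\Q)$. Since $\mu^*\colon H^1(X)\to H^1(X')$ is injective (purity, as in the proof of Proposition \ref{prop:albanese}), it is an isomorphism; together with the isomorphism $a_{X'}^*\colon H^1(A_{X'})\isom H^1(X')$ this shows $a_{X'}^*H^1(A_{X'})=\mu^*H^1(X)=H^1(X')$. Because $A_{X'}$ is a torus one has $H^2(A_{X'})=\bw{2}{H^1(A_{X'})}$, and because $X$ is a rational cohomology torus one has $H^2(X)=\bw{2}{H^1(X)}$; applying the ring homomorphisms $a_{X'}^*$ and $\mu^*$ and using the equality of their images on $H^1$ yields $a_{X'}^*H^2(A_{X'})=\mu^*H^2(X)$ as subspaces of $H^2(X')$.

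I then repeat the contraction argument of Proposition \ref{prop:albanese} with $g:=a_{X'}$. If $F$ were an irreducible subvariety contained in a fibre of $\mu$ with $\dim a_{X'}(F)>0$, then for a Kähler class $\omega$ on $A_{X'}$ the class $a_{X'}^*\omega$ would lie in $\mu^*H^2(X,\C)$; restricting to a desingularization $F'\to F$ it would be simultaneously nonzero (being the pullback of a Kähler class under a morphism onto the positive-dimensional variety $a_{X'}(F)$) and zero (since $\mu$ contracts $F$ to a point), a contradiction. Hence $a_{X'}$ contracts every fibre of $\mu$, and the same rigidity argument as in Proposition \ref{prop:albanese} (using $\mu_*\cO_{X'}=\cO_X$) factors it as $a_{X'}=h\circ\mu$ with $h\colon X\to A_{X'}$. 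With the quotient $\pi\colon A_{X'}\to A$ and the finite morphism $f\colon X\to A$ of Proposition \ref{prop:albanese} one gets $f\circ\mu=\pi\circ a_{X'}=\pi\circ h\circ\mu$, so $f=\pi\circ h$ and $h$ is finite. Finally, any morphism $\phi\colon X\to T$ to a torus satisfies $\phi\circ\mu=\psi\circ a_{X'}$ for a unique homomorphism-up-to-translation $\psi$, and surjectivity of $\mu$ gives $\phi=\psi\circ h$ uniquely; thus $h$ enjoys the universal property of Remark \ref{rmk:albanese} and is the Albanese morphism of $X$.

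The routine part is 1). The main obstacle is 2): one must work with the genuine Albanese variety $A_{X'}$ of the resolution, not with the auxiliary quotient $A$ of Proposition \ref{prop:albanese}. This is exactly what the identity $q(X')=n$ secures, by forcing $\mu^*$ to be an isomorphism on $H^1$; the remaining points (finiteness of $h$ and its universal property) then follow formally from the already-available factorization $f=\pi\circ h$ and from the universal property of $a_{X'}$.
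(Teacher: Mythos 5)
Your proposal is correct, and for part 1) it is essentially the paper's proof: both compute $h^j(X,\cO_X)=\binom{n}{j}$ from Lemma \ref{lem:Gr0} together with the purity supplied by Proposition \ref{prop:albanese} (the paper reads off $\mathrm{Gr}_F^0H^j(X)\isom H^j(A,\cO_A)$ through the finite morphism $f$, you read it off as $\bw{j}{H^{0,1}}$), and then transfer the count to $X'$ and to $\omega_X$ (the paper via $R\mu_*\omega_{X'}=\omega_X$, you via Leray for $\cO_{X'}$ plus Serre and Grothendieck duality --- equivalent bookkeeping). For part 2) your route is genuinely different. The paper's argument is a two-line dimension count: $q(X')=n$ gives $\dim (A_{X'})=n=\dim (A)$, and since the quotient $\pi\colon A_{X'}\to A$ constructed in the proof of Proposition \ref{prop:albanese} has connected fibers, it is an isomorphism; the identity $f\circ\mu=\pi\circ a_{X'}$ then immediately exhibits $a_{X'}=\pi^{-1}\circ f\circ\mu$ as factoring through $\mu$, with induced morphism $\pi^{-1}\circ f$, which is finite and universal because $f$ is. You instead use $q(X')=n$ to conclude that $\mu^*$ is an isomorphism on $H^1$, propagate this to $H^2$ through the ring structure, and re-run the contraction argument of Proposition \ref{prop:albanese} with $a_{X'}$ in place of $g$, afterwards deducing finiteness of $h$ from $f=\pi\circ h$ and verifying the universal property by hand. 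Both arguments rest on the same pivot ($q(X')=n$) and on the data constructed inside the proof of Proposition \ref{prop:albanese}; the paper's identification $A=A_{X'}$ is considerably more economical, while your version does not need to know that $\pi$ has connected fibers and makes explicit the finiteness and universality of the induced morphism $X\to A_{X'}$, which the paper asserts without comment.
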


\begin{proof}
By Proposition \ref{prop:albanese}, there is a finite morphism $f\colon X\rightarrow A$ to an \av\ such that $X$ is an $f$-rational cohomology torus.
Hence we have isomorphisms of Hodge structures $f^*\colon H^k(A)\isomlra H^k(X)$ for all $k$.
In particular, $\mathrm{Gr}_F^0H^k(X)\isom \mathrm{Gr}_F^0H^k(A)=H^k(A, \cO_A)$.
Since $X$ has rational singularities, we have $H^k(X, \cO_X)\isom \mathrm{Gr}_F^0H^k(X)$ (Lemma \ref{lem:Gr0}).
Hence $h^k(X, \cO_X)=\binom{n}{k}$.

Let $\mu\colon X'\rightarrow X$ be a desingularization.
Since $R\mu_*\omega_{X'}=\omega_X$, we have
\[h^k(X, \omega_X)=h^k(X', \omega_{X'})=h^{n-k}(X', \cO_{X'})=h^{n-k}(X, \cO_X)=\binom{n}{k}.\]

For 2), note that $h^1(X', \cO_{X'})=n$, hence $\dim (A_{X'})=\dim (X')=\dim (X)=n $.
By Proposition \ref{prop:albanese}, there is a quotient morphism $A_{X'}\rightarrow A$ with connected fibers, hence $A_X=A_{X'}$ and $a_{X'}$
factors through $\mu$.
\end{proof}

This simple but important proposition allows us to use the many known properties
of smooth varieties $X'$ of maximal Albanese dimension with $\chi(X', \omega_{X'})=0$ and $p_g(X')=1$.

\section{Rational cohomology tori in lower dimensions}

Thanks to the work of Chen--Debarre--Jiang \cite{CDJ} on smooth varieties of maximal Albanese dimension with $p_g=1$,
we can give a classification of rational cohomology tori up to dimension $3$.
We first recall some important examples.

\begin{exam}[Ein--Lazarsfeld \& Chen--Hacon threefolds]\label{exam:el&ch}
For each $j\in\{1,2,3\}$, consider an elliptic curve $E_j$ and a bielliptic curve $C_j \xrightarrow{2:1} E_j$ of genus $g_j \ge 2$, with corresponding involution
$\tau_j$ of $C_j$.
Set $A := E_1 \times E_2 \times E_3$ and consider the quotient $C_1 \times C_2 \times C_3\tto Z$ by the involution $\tau_1\times \tau_2 \times \tau_3$ and the tower of Galois covers
\[C_1 \times C_2 \times C_3 \stackrel{g}{\lraa}Z \stackrel{f}{\lraa}A\]
of degrees $2$ and $4$ respectively.
 The threefold $Z$ is  of general type with rational singularities, and it  has  $2^3\prod_{j=1}^3 (g_j-1)$ isolated singular points.
We call $Z$ an \emph{Ein--Lazarsfeld threefold} (\cite[Example 1.13]{el}).
If $ X\tto Z$ is any desingularization, we have $\chi(X,\omega_X)=\chi(Z,\omega_Z)=0$.

A variant of the previous construction gives us varieties with $p_g=1$, as follows.
Keeping the same notation, choose points $\xi_j\in\PE_j$ of order 2 and consider the induced double \'etale covers $E'_j\tto E_j$ and $C'_j\tto C_j$, with associated involution $\sigma_j$ of $C'_j$.
The involution $\tau_j$ on $C_j$ pulls back to an involution $\tau'_j$ on $C'_j$ (with quotient $E'_j$).
Let $C'_1\times C'_2\times C'_3\tto Z'$ be the quotient by the group (isomorphic to $(\Z/2\Z)^{4}$) of automorphisms generated by $\id_1\times \tau'_2\times\sigma_3$, $\sigma_1\times\id_2\times \tau'_3$, $\tau'_1\times\sigma_2\times\id_3$, and $\tau'_1\times\tau'_2\times\tau'_3$ and consider the tower
\[C'_1 \times C'_2 \times C'_3 \stackrel{g'}{\lraa}Z' \stackrel{f'}{\lraa}A\]
of Galois covers of respective degrees $2^4$ and $4$.
 The threefold  $Z'$  is  of general type and has rational singularities.
We call $Z'$ a \emph{Chen--Hacon threefold} (\cite[\S4, Example]{ch-Iitaka}).
For any desingularization $ X'\tto Z'$, one has $p_g(X')=1$.

The \'etale cover $E'_1\times E'_2\times E'_3\tto E_1\times E_2\times E_3$ pulls back to an \'etale cover $Z''\to Z'$, where $Z''$ is an Ein--Lazarsfeld threefold; in particular, $Z'$ also has isolated singular points.
Moreover, the quotient of $C'_1\times C'_2\times C'_3$ by the group of automorphisms generated by $\id_1\times \tau'_2\times\sigma_3$, $\sigma_1\times\id_2\times \tau'_3$, $\tau'_1\times\sigma_2\times\id_3$ is a smooth double cover of $Z'$, since the group acts freely.

This terminology differs from that of \cite{CDJ}: there, Ein--Lazarsfeld and Chen--Hacon threefolds refer to any of their desingularizations.
Our singular threefolds can be obtained from their smooth versions by considering the Stein factorizations of their Albanese morphisms.
\end{exam}

We can now prove the classification of rational cohomology tori up to dimension $3$ stated in the introduction.

\begin{proof}[{Proof of Theorem \ref{thm:dim2-3}}]
Let $X$ be a compact K\"ahler space with rational singularities which is a rational cohomology torus.
By Lemma \ref{lem:char}, we may assume that $X$ is projective.
Let
$\mu\colon X'\rightarrow X$ be a desingularization.
By Proposition \ref{cor:pg1}, we have $\chi(X', \omega_{X'})=0$ and $p_g(X')=1$.

If $\dim (X)=2$ and $X$ is of general type, we have $\chi(X', \omega_{X'})>0$ by Riemann-Roch, which is a contradiction.
If $\kappa(X)=0$, $X$ is an abelian variety by \cite[Corollary 2]{kaw}.
If $\kappa(X)=1$, in the diagram \eqref{eqn:diag1} of Remark \ref{rmk:algebraic}, $Y$ is an elliptic curve.
Hence $X$ is an Iitaka torus tower.

Assume $\dim (X)=3$.
If $X'$ is of general type, we can apply the structure theorem \cite[Theorem 6.3]{CDJ}: there exists an abelian \'etale cover $\wA\tto A_{X'}$ such that, in the Stein factorization $X'\times_{A_{X'}}\wA\to \wX\to \wA$, the variety $\wX$
is a Chen--Hacon threefold (Example \ref{exam:el&ch}).
As noted in Proposition \ref{prop:sing-albanese}.2),
$X$ appears in the Stein factorization of the Albanese morphism of $X'$, hence $\wX$ is an \'etale cover of $X$, and $X$ is singular.
We then apply Lemma \ref{lem:char} and part 1) to get the first part of 2).

For the second part of 2), it suffices to show that a Chen--Hacon threefold is a rational cohomology torus.
This follows from the more general Proposition \ref{prop:construction1} below.

For 3), we note that there exists a smooth projective threefold $\hat{Y}$ with an involution $\tau$ such that $Y:=\hat{Y}/\langle\tau\rangle$ is a Chen--Hacon threefold (Example \ref{exam:el&ch}), hence   a rational cohomology torus.
Let $\sigma$ be a translation of order $2$ on any non-zero abelian variety $K$.
The involution $\tau\times \sigma$ acts freely on $\hat{Y}\times K$ and $X:=(\hat{Y}\times K)/\langle \tau\times \sigma\rangle$ is a smooth projective variety.
Moreover, the natural morphism $X\rightarrow Y $ is the Iitaka fibration in \eqref{eqn:diag1}.
Thus $X$ is also a rational cohomology torus.
Since $Y$ is of general type, $X$ is not an Iitaka torus tower.

Moreover, by Example \ref{exam-dim}, there exists a rational cohomology torus $Y$ with rational singularities and  of general type in any dimension $\geq 3$ with a smooth double  cover  $\hat{Y}$.
By the same construction, whenever $3\leq m\leq n-1$, there exists a smooth rational cohomology torus $X$ of dimension $n$ with $\kappa(X)=m$.
\end{proof}

The following proposition shows in particular that Chen--Hacon threefolds are rational cohomology tori.

\begin{prop} \label{prop:construction1}
For each $j\in\set{1,\dots,n}$, let $\rho_j\colon C_j\rightarrow E_j$ be an abelian Galois cover with group $G_j$, where $C_j$ is a smooth projective curve and $E_j$ an elliptic curve.
Take a subgroup $G$ of $G_1\times\cdots\times G_n$ and set
$X:=(C_1\times\cdots \times C_n)/G$.
Assume $h^0(X, \omega_X)=1$; then $X$ is a rational cohomology torus with rational singularities.
\end{prop}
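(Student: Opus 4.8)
The plan is to realize $X$ as an explicit finite quotient and to reduce the statement to a character computation on the curves $C_j$. Write $W:=C_1\times\cdots\times C_n$, let $\pi\colon W\to X=W/G$ be the quotient map, and set $A:=E_1\times\cdots\times E_n$. Since the product cover $\rho:=\rho_1\times\cdots\times\rho_n\colon W\to A$ is invariant under $G_1\times\cdots\times G_n$, hence under $G$, it factors as $\rho=f\circ\pi$ for a morphism $f\colon X\to A$; as $\rho$ and $\pi$ are finite and surjective, so is $f$. Because $W$ is smooth and $G$ is finite, $X$ is normal with quotient singularities, which are rational, so the singularity assertion is immediate. It remains to prove that $X$ is a rational cohomology torus; by Definition~\ref{def:f-torus} and the implication $(*_f)\Rightarrow(*_X)$, it suffices to show that $f^*\colon H^\bullet(A,\Q)\to H^\bullet(X,\Q)$ is an isomorphism.

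First I would translate this into invariant theory. By the transfer isomorphism \cite[Chapter III, Theorem 7.2]{bre}, $\pi^*$ identifies $H^\bullet(X,\Q)$ with $H^\bullet(W,\Q)^G$. Using the Künneth formula together with $\rho_j^*H^\bullet(E_j,\Q)=H^\bullet(C_j,\Q)^{G_j}$, the image of $\rho^*=\pi^*f^*$ is exactly $H^\bullet(W,\Q)^{G_1\times\cdots\times G_n}\subseteq H^\bullet(W,\Q)^G$. Hence $f^*$ is an isomorphism if and only if
\[ H^\bullet(W,\Q)^{G}=H^\bullet(W,\Q)^{G_1\times\cdots\times G_n}.\]
Working over $\C$ and decomposing into common eigenspaces, the characters of $G_j$ occurring in $H^\bullet(C_j,\C)$ are exactly the trivial one (from $H^0$ and $H^2$) together with those occurring in $H^1(C_j,\C)$; call this set $S_j\subseteq\widehat{G_j}$, which is stable under $\chi\mapsto\chi^{-1}$ because $H^1(C_j,\C)$ is defined over $\Q$. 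The displayed equality then holds precisely when the only tuple $(\chi_1,\dots,\chi_n)\in S_1\times\cdots\times S_n$ for which $\prod_j(\chi_j\circ\mathrm{pr}_j)$ restricts trivially to $G$ is the trivial tuple; call this condition (C).

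Next I would extract information from the single hypothesis $h^0(X,\omega_X)=1$. Since $X$ has rational singularities (Lemma~\ref{lem:Gr0} and purity of the Hodge structure on $H^n(X)=H^n(W)^G$), $p_g(X)$ equals the top Hodge number $\dim\mathrm{Gr}_F^n H^n(X)=\dim H^{n,0}(W)^G$, which by Künneth equals $\dim\bigl(\bigotimes_j H^0(C_j,\Omega^1_{C_j})\bigr)^G$. Let $T_j\subseteq\widehat{G_j}$ be the set of characters occurring in $H^0(C_j,\Omega^1_{C_j})$, with multiplicities $m_j(\chi)$; the trivial character occurs with multiplicity $h^0(E_j,\Omega^1_{E_j})=1$. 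Expanding the invariants gives $p_g(X)=\sum \prod_j m_j(\chi_j)$ over tuples $(\chi_j)$ with $\prod_j(\chi_j\circ\mathrm{pr}_j)$ trivial on $G$, and the trivial tuple already contributes $1$. Thus $h^0(X,\omega_X)=1$ is equivalent to: the only tuple $(\chi_1,\dots,\chi_n)\in T_1\times\cdots\times T_n$ with $\prod_j(\chi_j\circ\mathrm{pr}_j)$ trivial on $G$ is the trivial tuple; call this condition (P).

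The heart of the argument, and the step I expect to be the main obstacle, is to show that (P) and (C) coincide, that is, that $T_j=S_j$ for every $j$; equivalently, the set of characters appearing in $H^0(C_j,\Omega^1_{C_j})$ is stable under $\chi\mapsto\chi^{-1}$. This is where the hypothesis that the base is an \emph{elliptic} curve is essential. Writing $\rho_{j*}\cO_{C_j}=\bigoplus_{\chi}L_{j,\chi}^{-1}$ as in the theory of abelian covers \cite{par}, relative duality together with $\omega_{E_j}\cong\cO_{E_j}$ gives $m_j(\chi)=h^0(E_j,L_{j,\chi})$ with $\deg L_{j,\chi}\ge 0$, and $\deg L_{j,\chi}>0$ exactly when $\chi$ is ramified, i.e. $\chi|_{I_P}\neq 1$ for some inertia group $I_P$. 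Since $\chi$ and $\chi^{-1}$ are ramified at the same points, and since on an elliptic curve a positive-degree line bundle has sections while a nontrivial degree-$0$ bundle does not, one obtains $m_j(\chi)>0\Leftrightarrow m_j(\chi^{-1})>0$, whence $T_j=T_j^{-1}=S_j$. Combined with (P)$\,\Leftrightarrow\,$(C), this yields $H^\bullet(W,\Q)^G=H^\bullet(W,\Q)^{G_1\times\cdots\times G_n}$, so $f^*$ is an isomorphism and $X$ is a rational cohomology torus. I would isolate the symmetry $T_j=S_j$ as a separate lemma, reducing it to the cyclic quotients $C_j/\ker\chi\to E_j$, where the positivity of $\deg L_{j,\chi}$ for ramified $\chi$ is transparent.
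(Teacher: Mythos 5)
Your proof is correct, and it takes a genuinely different route from the paper's. The paper argues coherently and Hodge-theoretically: it works with the reflexive differentials $\widetilde{\Omega}_X^p=(f_*\Omega_V^p)^G$, uses the fact that for quotient singularities $H^q(X,\widetilde{\Omega}_X^p)=\Gr_F^qH^{p+q}(X,\C)$ (\cite{ps}), and computes the pushforward $g_*\widetilde{\Omega}_X^p$ on $A$ as a direct sum of exterior tensor products of the line bundles $\cL_{\chi_j}^{\pm1}$; the hypothesis $h^0(X,\omega_X)=1$ forces every nontrivial summand to contain a nontrivial torsion factor, which kills its cohomology by the K\"unneth formula on $A$. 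You instead stay entirely in topological cohomology: transfer (\cite{bre}) gives $H^\bullet(X,\Q)=H^\bullet(W,\Q)^G$ and identifies the image of $f^*$ with $H^\bullet(W,\Q)^{G_1\times\cdots\times G_n}$, so the statement reduces to your condition (C), a purely character-theoretic assertion about which characters occur in the $H^1(C_j,\C)$, while the hypothesis becomes the analogous condition (P) for characters occurring in the $H^0(C_j,\Omega^1_{C_j})$. The bridge between the two, your key lemma $T_j=S_j$, rests on exactly the same elliptic-curve dichotomy that drives the paper's proof (each $\cL_\chi$ with $\chi\ne1$ is either ample or nontrivial torsion, symmetrically in $\chi\leftrightarrow\chi^{-1}$; you phrase this via ramification of the intermediate cyclic cover), so the essential geometric input coincides. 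What your route buys: it avoids the Du Bois/$V$-manifold Hodge machinery for the main comparison (you invoke Lemma \ref{lem:Gr0} and purity only to compute $p_g(X)$, and even that detour could be shortened by the identity $\omega_X=(\pi_*\omega_W)^G$, which is what the paper uses), it makes completely transparent how the single hypothesis $h^0(X,\omega_X)=1$ enters, and it never needs the paper's preliminary reduction to surjective projections $G\to G_j$. What the paper's route buys: the computation $H^q(X,\widetilde{\Omega}_X^p)\cong H^q(A,\Omega_A^p)$ delivers all Hodge groups of $X$ at once, and the same coherent technique transfers to situations where the factors are higher-dimensional abelian varieties and Pardini's structure theory \cite{par} replaces the curve-by-curve analysis, as in Examples \ref{exam:rt-p2} and \ref{exam-dim}.
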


\begin{proof}
Set $V:=C_1\times \cdots \times C_n$ and $A:=E_1\times\cdots \times E_n$, and let
\[ \rho\colon V\stackrel{f}{\lthra} X\stackrel{g}{\lthra} A \]
be the quotient morphisms.
The variety $X$ has finite quotient singularities, which are rational singularities.
In particular, $H^k(X)$ has a pure Hodge structure for all $k\in\set{0,\dots,n}$ (\cite[Theorem 2.43]{ps}).
More precisely, if $\iota\colon X_{\mathrm{reg}}\hookrightarrow X$ is the smooth locus of $X$ and we set
$\widetilde{\Omega}_{X}^p:=\iota_*(\Omega_{X_{\mathrm{reg}}}^p)$,
we have $\widetilde{\Omega}_{X}^p=(f_*\Omega_V^p)^G$ and $\widetilde{\Omega}_{X}^\bullet$ is a resolution of the constant sheaf $\C_X$ (\cite[Lemma 2.46]{ps}).
Thus, we have $H^q(X,\widetilde{\Omega}_{X}^p)=\Gr^q_{F}H^{p+q}(X, \C)$ (\cite[Proof of Theorem 2.43]{ps}).

We may assume that each projection $G\rightarrow G_j$ is surjective.
Indeed, if we denote by $H_j$ the image of this projection, there are natural morphisms $X\rightarrow C_j/H_j\to E_j$.
Since $X$ has maximal Albanese dimension, the condition $h^0(X, \omega_X)=1$ implies $h^0(C_j/H_j, \omega_{C_j/H_j})=1$ (\cite[Lemma 2.3]{JLT1}), so $C_j/H_j$ is also an elliptic curve.
Then we simply replace $G_j$ by $H_j$ and $E_j$ by $C_j/H_j$.

Let $j\in\set{1,\dots,n}$.
Since $\rho_j$ is an abelian Galois covering, we may write
\begin{eqnarray*}
&& \rho_{j*}\omega_{C_j}=\cO_{E_j}\oplus \bigoplus_{1\neq \chi_j\in G_j^\vee}\cL_{\chi_j},\\\
&&\rho_{j*}\cO_{C_j}=\cO_{E_j}\oplus \bigoplus_{1\neq \chi_j\in G_j^\vee}\cL_{\chi_j}^{-1},
\end{eqnarray*}
where $G_j^\vee$ is the character group of $G_j$ and $\cL_{\chi_j}$ is the line bundle  on $E_j$ associated with the character $\chi_j\in G_j^\vee$.
Since $G\rightarrow G_j$ is surjective, its dual $G_j^\vee\to G^\vee$ is injective.

Since $\rho_{j*}\omega_{C_j}$ is a nef vector bundle on $E_j$, each line bundle $\cL_{\chi_j}$ is nef, and for
$ \chi_j\ne 1$, it is either ample or non-trivial torsion in $\PE_j$.
Moreover,
if $\cL_{\chi_j}$ is a torsion line bundle, so is $\cL_{\chi_j^m}=\cL_{\chi_j}^{\otimes m}$ for each $m\in \Z$.
Thus, $\cL_{\chi_j}$ is non-trivial torsion if, and only if, $\cL_{\chi_j^{-1}}=\cL_{\chi_j}^{-1}$ is non-trivial torsion.

We compute
\begin{eqnarray*}
g_*\omega_{X}&=&g_*\big((f_*\omega_V)^G\big)=(\rho_*\omega_V)^G\\
&=& \Big(\hskip-4mm\bigoplus_{\chi_j\in G_j^\vee,\ 1\leq j\leq n} (\cL_{\chi_1}\boxtimes\cdots \boxtimes \cL_{\chi_n})\Big)^G\\
&=& \bigoplus_{ \chi_j\in G_j^\vee,\ \chi_1\dots\chi_n=1\in G^\vee } (\cL_{\chi_1}\boxtimes\cdots \boxtimes \cL_{\chi_n}).
\end{eqnarray*}

Since $\cO_A$ is a direct summand of $g_*\omega_X$ and $h^0(X, \omega_X)=1$, we conclude that for any $ (\chi_1,\ldots, \chi_n)\in G_1^\vee\times\cdots \times G_n^\vee$ not all trivial such that $\chi_1\cdots\chi_n=1\in G^\vee$, at least one of the corresponding line bundles $\cL_{\chi_j}$ is non-trivial torsion.

For any subset $J=\{j_1, \ldots, j_p\}$ of $T:=\{1, \ldots, n\}$, we set $V_J:=C_{j_1}\times \cdots \times C_{j_p}$ and we let $p_J\colon V\rightarrow V_J$ be the projection.
We also denote by $q_j\colon A\rightarrow E_j$ the projections.
Then,
\begin{eqnarray*}
g_*\widetilde{\Omega}_X^p &=& g_*\big((f_*\Omega_V^p)^G\big)=(\rho_*\Omega_V^p)^G\\
&=& \big(\rho_*\hskip-2mm\bigoplus_{J\subset T,\ |J|=p}p_J^*\omega_{V_J}\big)^G\\
&=&
\Big(\bigoplus_{J\subset T,\ |J|=p}
\Big(\bigoplus_{\chi_j\in G_j^\vee{\rm\ for\ all\ }j\in J}\big(\bigotimes_{j\in J} q_j^*\cL_{\chi_j}\big)\Big)\otimes
\Big(\bigoplus_{\chi_k\in G_k^\vee{\rm\ for\ all\ }k\in J^c}\big(\bigotimes_{k\in J^c} q_k^*\cL_{\chi_j}^{-1}\big)\Big)^G\\
&=& \bigoplus_{\substack{J\subset T\\|J|=p}}\ \ \bigoplus_{\substack{\chi_j\in G_j^\vee,\ \chi_k\in G_k^\vee\\\prod_{j\in J}\chi_j\prod_{k\in J^c}\chi_k^{-1}=1\in G^\vee}}\big(\bigotimes_{\substack{\chi_j\in G_j^\vee\\j\in J}}q_j^*\cL_{\chi_j}\big)\otimes \big(\bigotimes_{\substack{\chi_k\in G_k^\vee\\k\in J^c}}q_k^*\cL_{\chi_k}^{-1}\big).
\end{eqnarray*}

For example, for $J=\set{1,\ldots,p}$, the fourth equality reads
\[\rho_*p_J^*\omega_{V_J}
=\rho_{1*}\omega_{C_1}\boxtimes\cdots\boxtimes \rho_{p*}\omega_{C_p}\boxtimes\rho_{p+1*}\cO_{C_{p+1}}\boxtimes\cdots\boxtimes \rho_{n*}\cO_{C_{n}}.\]

For any non-trivial solution of $\prod_{j\in J}\chi_j\prod_{k\in J^c}\chi_k^{-1}=1\in G^\vee$, we have already seen that the condition $h^0(X,\omega_X)=1$ implies that either there exists $j\in J$ such that $\cL_{\chi_j}$ is non-trivial torsion or there exists $k\in J^c$ such that $\cL_{\chi_k^{-1}}$ is non-trivial torsion, in which case $\cL_{\chi_k}^{-1}$ is also non-trivial torsion.
Thus, by the K\"unneth formula, only the trivial direct summands of $g_*\Omega_X^p$ have non-trivial cohomology groups, since all the others contain a non-trivial torsion line bundle.
Therefore, \[H^q(X, \widetilde{\Omega}_X^p)=H^q(A, g_*\widetilde{\Omega}_X^p)=H^q(A, \Omega_A^p).\]
Since $H^q(X,\widetilde{\Omega}_{X}^p)=\mathrm{Gr}^q_{F}H^{p+q}(X, \C)$, we conclude that $X$ is a rational cohomology torus.
\end{proof}

\section{Constraints on the Albanese morphism}

In this section, we study how the condition $\chi(X,\omega_X)=0$ gives restrictions on the degree of the Albanese morphism.
Note that $\chi(X,\omega_X)$ is a birational invariant.
It would be interesting to study the non-birational conditions $\chi(X,\Omega_X^p)=0$ for $0< p<\dim(X)$ to get further restrictions on the structure of rational cohomology tori.

We first recall some facts about varieties of general type, of maximal Albanese dimension, and with $\chi=0$ (for more details, see \cite{CDJ, CJ}).

Let $X$ be a smooth projective variety and let $f\colon X\rightarrow A$ be a generically finite morphism to an abelian variety.
We set
\[V^i(f_*\omega_X):=\{[P]\in \PA\mid H^i(A, f_*\omega_X\otimes P)\neq 0\}. \]
By \cite{gl1,gl2, sim, hac}, $V^i(f_*\omega_X)$ is a union of torsion translates of abelian subvarieties of $\PA$   of codimension $\ge i$.
The set
\begin{equation}\label{eqn:S_f}
S_f:=\set{  T\subset \PA\left|\begin{array}{c} \exists\,i\ge1
\text{ such that }   T \text{ is a component}\\\text{of }V^i(f_*\omega_X)\text{ with }\codim_\PA (  T)=i\end{array}\right.}
\end{equation}
controls the positivity of the sheaf $f_*\omega_X$ (\cite{CJ}, \cite[Section 3]{JLT1}).  
\medskip

We use the following notation: for any abelian subvariety $\PB\subset \PA$, we let
\begin{equation}\label{eqn:X_B}
X\lthra X_B\xrightarrow{\ f_B\ } B
\end{equation}
be the Stein factorization of the composition $X \xrightarrow{\ f\ } A\tto B$.
After birational modifications, we may assume that $X_B$ is also smooth.
Note that when $f$ is the Albanese morphism of $X$ and $\PB\in S_f$, the map $f_B$ is   the Albanese morphism of $X_B$.

\begin{lemm}[{\cite{gl1,el,CDJ,CJ}}]\label{lem:chi0}
Let $X$ be a smooth projective variety of general type with a generically finite morphism $f\colon X\rightarrow A$ to an abelian variety.
\begin{itemize}
\item[1)] We have $\chi(X, \omega_X)=0$ if, and only if, $V^0(f_*\omega_X)$ is a proper subset of $\PA$. If these properties hold, the \av\  $A $ has at least $3$ simple factors.  
\item[2)]  If $\chi(X, \omega_X)=0$ and  $T$ is an irreducible component of $V^0(f_*\omega_X)$, we have $T\in S_f$.
More precisely,   $T$ is an irreducible component of $V^i(f_*\omega_X)$, where $i=\codim_{\PA}(T)$.  
\item[3)] For any abelian variety $\PB\in S_f$, the variety $X_B$ is of general type and $\chi(X_B, \omega_{X_B})>0$.
\item[4)] If $\chi(X, \omega_X)=0$, there exists a quotient $A\tto B$ of abelian varieties such that the general fiber $F_B$ of the induced morphism
$X\tto X_B$ is primitive with $\chi(F_B, \omega_{F_B})=0$.
\end{itemize}
\end{lemm}

\begin{proof}
 The equivalence in  1) follows   from generic vanishing (\cite[Theorem 1]{gl1}, \cite[Remark 1.6, Theorem 1.2]{el})  and the other statement is \cite[Corollary 3.4]{CDJ}. 
For 2), see \cite[Claim (1.10)]{el}.
For 3), see \cite[Theorem 3.1]{CDJ}.
The last statement is \cite[Proposition 6.2]{CJ}.
\end{proof}

We introduce the notion of {\em primitive} and {\em minimal primitive} varieties.
In Section \ref{ssec:str_minimal_prim}, we study the structure of minimal primitive varieties and prove Theorem \ref{thm:deg-chi0}.
We provide examples in Section \ref{sec:examples}.

\begin{defi}[{\cite[Definition 6.1]{CJ}}]\label{def:primitive}
Let $X$ be a smooth projective variety of general type, of maximal Albanese dimension, and with $\chi(X,\omega_X)=0$.
We say that $X$ is \emph{primitive} if there exist no proper smooth subvarieties $F$ through a general point of $X$ such that $\chi(F,\omega_F)=0$.

We say that $X$ is \emph{minimal primitive} if it is primitive and, for any rational factorization $ X\stackrel{a}{\dashrightarrow} Y\rightarrow A_X$ of the Albanese morphism of $X$
through a smooth projective variety $Y$ of general type, the map $a$ is birational.
\end{defi}

This definition is different from the definition of primitive given in \cite[Definition 1.24]{cat-moduli}, but is equivalent to the one given in \cite[Definition 6.1]{CJ}.

We will use the following results concerning primitive varieties.

\begin{lemm}[{\cite{CDJ,CJ}}]\label{lem:prim}
Let $X$ be a smooth projective variety of general type, of maximal Albanese dimension, with $\chi(X,\omega_X)=0$.
Assume that $X$ is primitive.
\begin{itemize}
\item[1)] The Albanese morphism $a_X\colon X\to A_X$ is surjective.
\item[2)] For any quotient $A_X\tto B$  to a simple abelian variety, with connected fibers, the composition $X \xrightarrow{a_X} A_X\tto B$ is a fibration.
\item[3)] If the abelian variety $A$ has $m$ simple factors, $V^0(f_*\omega_X)$ has at least $m$ irreducible components; each component is a torsion translate of an abelian variety with $m-1$ simple factors, and the intersection of these components has dimension $0$.\label{lem(4):prim}
\end{itemize}

\end{lemm}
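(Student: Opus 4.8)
The plan is to deduce all three assertions from an analysis of the cohomological support locus $V^0(a_{X*}\omega_X)$, combined with Lemma~\ref{lem:chi0} and the defining feature of primitivity (Definition~\ref{def:primitive}): a primitive $X$ admits no proper smooth subvariety $F$ through its general point with $\chi(F,\omega_F)=0$. I will write $f=a_X$ and $A=A_X$, and use repeatedly that a smooth proper variety of maximal Albanese dimension has $\chi(\cdot,\omega)\ge 0$ by generic vanishing, so that primitivity in fact forces $\chi(F,\omega_F)>0$ for every such fibre $F$. The common mechanism is this: a nontrivial factorization of the Albanese morphism produces a fibration of $X$ whose general fibre is a proper subvariety through the general point, and one plays $\chi(X,\omega_X)=0$ against the subadditivity inequality $\chi(X,\omega_X)\ge\chi(F,\omega_F)\,\chi(Y,\omega_Y)$ for a fibration $X\to Y$ of maximal Albanese dimension (\cite{CJ}).

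For \emph{Part 1} I would argue by contradiction. Suppose $a_X$ is not surjective and set $Z:=a_X(X)\subsetneq A$, which generates $A$. After a modification $a_X$ factors through a resolution $\widetilde Z\to Z$, and the trace map makes $a_{\widetilde Z*}\omega_{\widetilde Z}$ a direct summand of $a_{X*}\omega_X$; by generic vanishing $0\le\chi(\widetilde Z,\omega_{\widetilde Z})\le\chi(X,\omega_X)=0$, so $\chi(\widetilde Z,\omega_{\widetilde Z})=0$. By Ein--Lazarsfeld (\cite{el}), $Z$ is then stable under a positive-dimensional subtorus $K\subsetneq A$. Set $A':=A/K$, let $Z'\subsetneq A'$ be the proper image of $Z$, which has trivial stabilizer, and let $X\to X'\to A'$ be the Stein factorization, so the general fibre $F$ of $X\to X'$ is a proper subvariety of $X$ through its general point dominating a translate of $K$. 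If $\chi(X',\omega_{X'})>0$, subadditivity forces $\chi(F,\omega_F)=0$, contradicting primitivity; if $\chi(X',\omega_{X'})=0$, the trace argument and Ein--Lazarsfeld applied to $X'\to Z'$ force $Z'$ to have a positive-dimensional stabilizer, contradicting its triviality. Hence $a_X$ is surjective, and being generically finite it satisfies $\dim A=n$. For \emph{Part 2}, by Part~1 the composite $X\to A\tto B$ is surjective; let $X\to X_B\xrightarrow{b}B$ be its Stein factorization. If $b$ were not birational it would be a nontrivial finite cover of the simple abelian variety $B$: an \'etale one would make $A\tto B$ factor through an isogeny, contradicting the connectedness of its kernel; a ramified one makes $X_B$ of general type (the branch divisor on a simple abelian variety is ample) with $\chi(X_B,\omega_{X_B})>0$ by Lemma~\ref{lem:chi0}.1), and subadditivity would then give $\chi(F,\omega_F)=0$ for the general fibre $F$ of $X\to X_B$, again contradicting primitivity. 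Thus $b$ is birational and $X\to B$ is a fibration.

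For \emph{Part 3} I would proceed factor by factor. Write $A\sim B_1\times\cdots\times B_m$ with the $B_i$ simple, so $\PA\sim\PB_1\times\cdots\times\PB_m$ with each $\PB_i$ simple. Fix $i$ and consider $A\tto B_i$; by Part~2 the induced $X\to B_i$ is a fibration, whose general fibre $F_i$ is a proper smooth subvariety through the general point mapping generically finitely \emph{onto} a translate of $\prod_{j\ne i}B_j$. By primitivity $\chi(F_i,\omega_{F_i})>0$, so the $V^0$ of $F_i$ relative to this map is \emph{all} of $\prod_{j\ne i}\PB_j$. Propagating these sections over the base $B_i$ (relative generic vanishing for $X\to B_i$, \cite{CDJ,CJ}) shows that $V^0(a_{X*}\omega_X)$ contains a full torsion translate of $\prod_{j\ne i}\PB_j$; this translate lies in an irreducible component $T_i$, which by Lemma~\ref{lem:chi0}.2) is a torsion translate of a subtorus $\PC_i\supseteq\prod_{j\ne i}\PB_j$. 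Since $\PB_i$ is simple, the only subtori of $\PA$ lying between $\prod_{j\ne i}\PB_j$ and $\PA$ are these two, and $\PC_i=\PA$ is excluded because $V^0\ne\PA$; hence $\PC_i=\prod_{j\ne i}\PB_j$, a product of the $m-1$ simple factors $\PB_j$ with $j\ne i$. As these subtori are pairwise distinct, the $T_i$ give at least $m$ components of the required shape. Finally, inside $\prod_j\PB_j$ one has $\bigcap_{i=1}^m\prod_{j\ne i}\PB_j=\{0\}$, so the intersection of the torsion translates $T_i$ is at most one point; a fortiori the intersection of all components of $V^0$, being contained in $\bigcap_i T_i$, has dimension $0$.

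I expect the main obstacle to be Part~3: precisely the relative generic vanishing step that upgrades the fibrewise nonvanishing into a genuine component of $V^0(a_{X*}\omega_X)$, together with the complementary classification (from \cite{CDJ,CJ}) ensuring that \emph{every} component of $V^0$ has exactly $m-1$ simple factors. By contrast, Parts~1 and~2 are comparatively formal, resting only on the Ein--Lazarsfeld theorem, subadditivity of $\chi$, and the exclusion of $\chi=0$ subvarieties provided by primitivity.
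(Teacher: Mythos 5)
The paper's own proof of this lemma is pure citation: part 1) is \cite[Lemma 4.6]{CDJ} plus the definition of primitivity, and parts 2) and 3) are \cite[Lemma 6.4]{CJ}, supplemented for 3) by the remark that the projection of $V^0(a_{X*}\omega_X)$ to \emph{every} non-trivial quotient of $\PA_X$ is surjective. Your parts 1) and 2) are reasonable reconstructions of the cited results: Ein--Lazarsfeld \cite{el} plus passage to the quotient by the stabilizer for 1), and for 2) the dichotomy over a simple base (an isogeny, excluded by connectedness of the kernel of $A_X\tto B$, versus a base of general type, excluded because Lemma \ref{lem:chi0}.1) would force the simple abelian variety $B$ to have at least three simple factors). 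The caveat there is the inequality $\chi(X,\omega_X)\ge\chi(F,\omega_F)\,\chi(Y,\omega_Y)$, which you attribute to \cite{CJ}: it is not available there as a quotable black box, and both prongs of your argument that use it would need it proved via the generic vanishing package.

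The genuine gap is in part 3), exactly at the step you flagged. Write $h_i\colon X\to B_i$ for the fibration, $q_i\colon A_X\tto B_i$ for the quotient, and $\PK_i$ for the quotient of $\PA_X$ dual to $\ker(q_i)$. What relative generic vanishing gives is: for each torsion $Q\in\PK_i$ with torsion lift $\widetilde Q\in\PA_X$, the sheaf $h_{i*}(\omega_X\otimes a_X^*\widetilde Q)$ is a non-zero GV-sheaf on $B_i$, hence has non-empty $V^0$, so there exists some $R\in\PB_i$, \emph{depending on} $Q$, with $\widetilde Q\otimes q_i^*R\in V^0(a_{X*}\omega_X)$. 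By density of torsion points this proves that the projection $V^0(a_{X*}\omega_X)\tto\PK_i$ is surjective; it does \emph{not} produce a fixed torsion translate of $\prod_{j\ne i}\PB_j$ inside $V^0$, because the twist $R$ moves with $Q$. Surjectivity onto the $m$ quotients $\PK_i$ is strictly weaker than the conclusion: if the $B_j$ are pairwise isogenous --- which is the case in all the relevant examples of this paper, products of elliptic curves --- a single torsion translate of a diagonal subtorus such as $\PC=\{(x_1,x_2,x_3)\in\PE^{\,3}\mid x_1=x_2+x_3\}$ surjects onto all three $\PK_i$, yet it is one component (not $\ge m$) whose self-intersection has positive dimension. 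So everything your argument actually establishes is consistent with a $V^0$ that violates the lemma; the containment $\PC_i\supseteq\prod_{j\ne i}\PB_j$, on which your sandwich argument and the dimension-$0$ claim rest, is unproved. Note that your own fiberwise mechanism, applied to the Stein factorization of $X\to A_X/K$ for \emph{every} non-trivial proper subtorus $K\subset A_X$ (primitivity forces $\chi>0$ for all these fibers), does yield the paper's stronger surjectivity statement, which excludes diagonal components when applied to $\PA_X\tto\PA_X/\PC$; but converting that into ``at least $m$ components, each with $m-1$ simple factors, with intersection of dimension $0$'' is precisely the content of \cite[Lemma 6.4]{CJ}, and it cannot be reached through the three coordinate projections alone.
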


\begin{proof}
Assertion 1) follows   from \cite[Lemma 4.6]{CDJ} and the definition of  primitive. 
For 2), see \cite[Lemma 6.4]{CJ}.
Statement 3) also follows from \cite[Lemma 6.4]{CJ}, since for any quotient $\PA_X\tto \PK$ of abelian varieties, the composition $V^0(f_*\omega_X)\hookrightarrow A_X\tto \PK$ is surjective.
\end{proof}

\subsection{The structure of minimal primitive varieties}\label{ssec:str_minimal_prim}
We describe the structure of minimal primitive varieties $X$:
we prove that each simple factor $K_j$ of the Albanese variety $A_X$ has a birational Galois cover $F_j\to K_j$ with Galois group $G_j$ such that
$X$ is a quotient of the product of the $F_j$ by a finite subgroup of the product of the $G_j$.
When the $F_j$ are curves, these quotient varieties already played an important role in Proposition \ref{prop:construction1}.

\begin{theo}\label{thm:minimal}
Let $X$ be a smooth projective variety of general type, of maximal Albanese dimension, with $\chi(X,\omega_X)=0$.
We assume that $X$ is minimal primitive.
For some $m\ge3$, there exist
\begin{itemize}
\item smooth projective varieties $F_1,\dots,F_m$ of general type,
\item non-trivial finite groups $G_j$ acting faithfully on $F_j$ such that the quotient $F_j/G_j$ is birational to a simple abelian variety $K_j$,
\item an isogeny $K_1\times\cdots\times K_m\rightarrow A_X$ which induces an \'etale cover $\widetilde{X}\to X$,
\item a subgroup $G$ of $G_1\times\cdots\times G_m$,
\end{itemize}
such that $\widetilde{X}$ is birational to $(F_1\times\cdots\times F_m)/G$.

Furthermore, we can assume that the projections\footnote{Here, the notation $\widehat{G}_i$ means that the factor $G_i$ is missing in the product.} $\pi_{ij}\colon G\rightarrow G_1\times\cdots\times \widehat{G}_i\times\cdots\times \widehat{G}_j\times \cdots \times G_m$ are injective and the projections $G\tto G_i$ are surjective whenever $1\leq i<j\leq m$.
\end{theo}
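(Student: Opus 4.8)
The plan is to reconstruct the product structure from the positivity data carried by $V^0(a_{X*}\omega_X)$, using minimality to rule out any further splitting or variation. First I would pass to the \'etale cover $\widetilde X\to X$ associated with an isogeny $K_1\times\cdots\times K_m\to A_X$ that splits the Albanese variety into its simple factors $K_1,\dots,K_m$; by Lemma \ref{lem:chi0}.1) one has $m\ge3$. The cover $\widetilde X$ is again of general type, of maximal Albanese dimension, with $\chi(\widetilde X,\omega_{\widetilde X})=0$ and minimal primitive (the transfer of primitivity and minimality along \'etale covers is a point I would verify separately), and $A_{\widetilde X}=K_1\times\cdots\times K_m$. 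Writing $f:=a_{\widetilde X}$ and $\widehat{A}_{\widetilde X}=\widehat{K}_1\times\cdots\times\widehat{K}_m$, Lemma \ref{lem:prim}.3) identifies the components of $V^0(f_*\omega_{\widetilde X})$ as the $m$ coordinate translates $\widehat{B}_j$ of $\prod_{i\ne j}\widehat{K}_i$; dually these correspond to the quotients $q_j\colon A_{\widetilde X}\to B_j:=\prod_{i\ne j}K_i$ with $\ker(q_j)=K_j$. For each $j$, Lemma \ref{lem:chi0}.2) gives $\widehat{B}_j\in S_f$, so the Stein factorization $g_j\colon \widetilde X\to X_j\xrightarrow{\,f_j\,}B_j$ has, by Lemma \ref{lem:chi0}.3), the variety $X_j$ of general type with $\chi(X_j,\omega_{X_j})>0$, and $f_j$ equal to its Albanese morphism, so that $A_{X_j}=B_j$.

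Next I would analyse the general fibre $F_j$ of $g_j$. By easy addition $\kappa(F_j)\ge\dim\widetilde X-\dim X_j=\dim F_j$, so $F_j$ is of general type; since a fibre of $g_j$ maps into a translate of $\ker(q_j)=K_j$ and $K_j$ is simple, the Albanese image of $F_j$ is all of $K_j$ and $a_{F_j}\colon F_j\to K_j$ is generically finite and surjective (in particular $F_j$ has maximal Albanese dimension). Finally $F_j$ passes through a general point of $\widetilde X$, so primitivity forces $\chi(F_j,\omega_{F_j})\ne0$, hence $\chi(F_j,\omega_{F_j})>0$: each $F_j$ is a $\chi>0$ building block lying over the simple factor $K_j$.

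The heart of the argument, and the step I expect to be the main obstacle, is to show that each $g_j$ is birationally isotrivial, with general fibre $F_j\to K_j$ a birational Galois cover of group $G_j$, and then to assemble the $m$ fibrations into a single product quotient. For isotriviality I would argue that a nontrivial variation of the family $\{F_j^t\}_{t\in X_j}$ would enrich the positivity of $f_*\omega_{\widetilde X}$: decomposing $f_*\omega_{\widetilde X}$ into torsion-twisted pullbacks from quotient tori via generic vanishing, any moduli in the fibres would produce a component of $V^0(f_*\omega_{\widetilde X})$ that is \emph{not} a coordinate translate $\widehat{B}_i$, contradicting Lemma \ref{lem:prim}.3); equivalently, such variation would yield an intermediate variety of general type strictly between $\widetilde X$ and $A_{\widetilde X}$, contradicting minimality. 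Granting isotriviality, \cite{bbb} trivializes $g_j$ after a finite base change, and the deck transformations of the generic fibre over $K_j$ give a faithful action of a finite group $G_j$ with $F_j/G_j\bir K_j$. Carrying out the base changes simultaneously (after enlarging $\widetilde X$ to a further \'etale cover if necessary) realizes $\widetilde X\bir(F_1\times\cdots\times F_m)/G$ for a subgroup $G\le G_1\times\cdots\times G_m$.

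It then remains to normalize the construction so that the stated projection conditions hold. Surjectivity of $G\tto G_i$ is equivalent to $\widetilde X\to K_i$ having connected fibres, which holds by Lemma \ref{lem:prim}.2); should some $G\to G_i$ fail to be surjective, I would replace $F_i$ by the intermediate cover of $K_i$ determined by the image of $G\to G_i$ and shrink $G_i$ accordingly. For the injectivity of the pairwise projections $\pi_{ij}$, I would invoke minimality: a nontrivial element of $G$ supported on the two coordinates $i,j$ would allow the Albanese morphism of $\widetilde X$ to factor through a strictly smaller, non-birational cover of $A_{\widetilde X}$ obtained by quotienting only the $F_i$- and $F_j$-factors, contradicting minimal primitivity; after this normalization every $\pi_{ij}$ is injective. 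Throughout, the delicate points are the generic-vanishing bookkeeping that pins $V^0(f_*\omega_{\widetilde X})$ down to exactly the coordinate components, and the passage of minimality and primitivity from $X$ to the covers $\widetilde X$ used above.
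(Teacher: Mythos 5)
Your proposal reproduces the preparatory step correctly (passing to the \'etale cover determined by the components of $V^0(a_{X*}\omega_X)$, so that $A_{\widetilde X}=K_1\times\cdots\times K_m$ and those components become coordinate subtori), but the two steps that constitute the heart of the theorem are not proved, and the mechanisms you propose for them would not work. For the birational isotriviality of the fibrations $\widetilde X\to X_j$, you argue that a nontrivial variation of the fibres would ``produce a component of $V^0(f_*\omega_{\widetilde X})$ that is not a coordinate translate'' or would ``yield an intermediate variety of general type strictly between $\widetilde X$ and $A_{\widetilde X}$''; neither implication is justified, neither follows from the lemmas you cite, and you construct neither the extra component nor the intermediate variety. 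The paper's mechanism is different, and it is the key missing idea: for each pair $i<j$ one forms $Y_{ij}$, a desingularization of the main component of the fibre product $X_{A_i}\times_{X_{A_{ij}}}X_{A_j}$. Since $X_{A_i}$ and $X_{A_j}$ are of general type by Lemma \ref{lem:chi0}.3), Viehweg's subadditivity theorem \cite{vie} makes $Y_{ij}$ of general type; the Albanese morphism factors through $Y_{ij}$ because $A_i\times_{A_{ij}}A_j\isom A_X$, and minimality then forces $X\to Y_{ij}$ to be \emph{birational}. Isotriviality is deduced from this birationality: the fibre of $f_1$ at $x$ is birational to $g_{21}^{-1}(g_{12}(x))$, hence is birationally constant along the fibres of each $g_{1j}$, and $X_{A_1}$ is chain-connected by such fibres. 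In effect, $Y_{ij}$ is exactly the ``intermediate variety'' your contrapositive needs, but it must be built and shown to be of general type before minimality can say anything.

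Second, even granting isotriviality of $\widetilde X\to X_j$, your assembly of the group data conflates two different fibrations. To produce $G_j$ one needs birational isotriviality of the maps $\widetilde X\to K_j$ to the \emph{simple factors} (so that a finite Galois base change $F_j'\to K_j$ with group $G_j$ trivializes it), together with the fact that $F_j\to K_j$ is birationally \emph{Galois}; the ``deck transformations'' of a generically finite map $F_j\to K_j$ do not form a group with quotient $K_j$ unless this Galois property is established first. In the paper this is precisely Lemma \ref{lem:str}, proved by induction on $m$ using two finiteness facts that never appear in your sketch: a fixed variety dominates only finitely many birational classes of varieties of general type (this is what makes $\widetilde X\to K_j$ isotrivial), and the birational automorphism group of a variety of general type is finite (this is what transfers the $G_j$-action to the fibre and identifies $F_j'$ with $F_j$). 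Finally, your argument for the injectivity of $\pi_{ij}$ runs in the wrong direction: a nontrivial element of $\Ker(\pi_{ij})$ produces a variety \emph{dominating} $\widetilde X$, not a factorization $\widetilde X\dashrightarrow Y\to A_{\widetilde X}$ through a variety of general type, so minimality cannot be invoked as you state. The correct deduction again uses the birationality $X\bir Y_{ij}$: it implies that the general fibre of $X\to X_{A_{ij}}$ is birational to $F_i\times F_j$, and since $\Ker(\pi_{ij})$ acts faithfully on $F_i\times F_j$ with quotient that fibre, it must be trivial.
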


We summarize part of the conclusions of the theorem in a commutative diagram:
\begin{equation*}\label{ndiag}
\xymatrix
@C=12mm@R=3mm{
F_1\times\dots\times F_m\ar@{->>}[rrdd]^-{\ \ \ /G_1\times\dots\times G_m}\ar@{-->>}[dd]^-{/G}\\\\
\widetilde X\ar@{->>}[rr]\ar@{->>}[dd]^{\rm\acute etale}&&K_1\times\dots\times K_m\ar@{->>}[dd]^{\rm\acute etale}\\
&\square\\
X\ar@{->>}[rr]^{a_X} && A_X.}
\end{equation*}

Minimal primitive varieties with $\chi=0$ will be constructed in Examples \ref{exam:chi0-p2} and \ref{exam:rt-p2}. 

\begin{proof}
The proof is divided into four steps. 
\phantom\qedhere
\end{proof}
\subsubsection*{\bf Step 1} {\em Reduction via \'etale covers.}\\
Taking if necessary an \'etale cover of $A_X$ (which induces an \'etale cover of $X$), we may assume that each element of $S_{a_X}$ (see \eqref{eqn:S_f}), which by Lemma \ref{lem:chi0}.1) and 2) is nonempty, contains the origin $0_{\PA_X}$.

Assume that $A_X$ has $m$ simple factors.
Since we are assuming that $X$ is {\em primitive}, by Lemma \ref{lem:prim}.3), there exist irreducible components $\PA_1, \ldots, \PA_m$ of $V^0(a_{X*}\omega_X)$ such that each $\PA_j$ has $m-1$ simple factors and
\begin{equation}\label{eqn:int=0}
\dim \Big(\bigcap_{1\leq j\leq m}\PA_j\Big)=0.
\end{equation}
The quotient $\PK_j:=\PA_X/\PA_j$ is a simple abelian variety and we have the dual injective morphism $K_j\hookrightarrow A_X$.
By \eqref{eqn:int=0}, the sum morphism
\[\pi\colon  A':=K_1\times\cdots\times K_m\rightarrow A_X\]
is an isogeny.

If $X'\tto X$ is the \'etale cover induced by $\pi$, we have $\pi^*(\PA_j)=\PK_1\times\cdots \times\{0_{\PK_j}\}\times\cdots \times\PK_m$.
Thus $V^0(a_{X'*}\omega_{X'})$ contains at least the $m$ components $ \PK_1\times\cdots \times\{0_{\PK_j}\}\times\cdots\times \PK_m$.
Moreover, $A_{X'}=K_1\times\cdots\times K_m$.

\smallskip

Thus, we have constructed the following elements in the statement of the theorem:
the simple abelian varieties $K_i$ and the isogeny $K_1\times\cdots\times K_m\rightarrow A_X$.
We still need to identify the fibers $F_j$ and the groups $G_j$ and $G$.

\subsubsection*{\bf Step 2} {\em A special property of fiber products.}\\
By Step 1, we can suppose that $\PA_j:=\PK_1\times\cdots \times\{0_{\PK_j}\}\times\cdots\times \PK_m$ is a component of $V^0(a_{X*}\omega_{X})$ and $A_{X}=K_1\times\cdots\times K_m$.

For each $1\leq i< j\leq m$, set $\PA_{ij}:=\PA_i\cap\PA_j$.
Using the notation \eqref{eqn:X_B}, we have a commutative diagram
\begin{equation}\label{eqn:morphisms}
\xymatrix
@!=.8cm
@C=20mm@R=5mm{
& X\ar@{->>}[ddl]_{f_i}\ar@{->>}[ddr]^{f_j}\ar@{->>}[d]^-{f_{ij}}\\
&Y_{ij}\ar@{->>}[ld]\ar@{->>}[rd]\\
X_{A_i}\ar@{->>}[dr]^{g_{ij}} && X_{A_j}\ar@{->>}[dl]_{g_{ji}}\\
& X_{A_{ij}},}
\end{equation}
where $Y_{ij}$ is a desingularization of the main component of $X_{A_i}\times_{X_{A_{ij}}}X_{A_j}$.
Since $A_{X}=K_1\times\cdots\times K_m$, we have $A_i\times_{A_{ij}}A_j\isom A_X$.
Hence, the Albanese morphism factors as
\begin{equation}
a_X\colon  X\xrightarrow{\ f_{ij}\ } {Y}_{ij}\rightarrow A_X.
\end{equation}
Since $X_{A_i}$ and $X_{A_j}$ are of general type by Lemma \ref{lem:chi0}.3), $Y_{ij}$ is of general type by Viehweg's sub-additivity theorem (\cite[Corollary IV]{vie}).
Moreover, the assumption that $X$ is \emph{minimal} implies that $f_{ij}$ is birational.

In other words, $X$ is birational to the fiber product of any two of the fibrations induced by the simple factors of the Albanese variety.

\subsubsection*{\bf Step 3} {\em The $f_j$ are all birationally isotrivial fibrations.}\\
We use the notation of diagram \eqref{eqn:morphisms}.
Since $f_{12}$ is birational, for a general point $x\in X_{A_1}$, the fiber $F_x$ of $f_1$ is birational to $g_{21}^{-1}(g_{12}(x))$.
Hence $F_x$ is birational to $F_y$ for $y\in g_{12}^{-1}(g_{12}(x))$ general.
Similarly, $F_x$ is birational to $F_y$ for $y\in g_{1j}^{-1}(g_{1j}(x))$ general, for all $j\in\set{2,\dots,m}$.
Any two points of $X_{A_1}$ can be connected by a chain of fibers of $g_{12}$, $g_{13},\dots,$ or $g_{1m}$.
For general points $x$ and $y$ of $X_{A_1}$, $F_x$ is therefore birational to $F_y$ and $f_1$ is a birationally isotrivial fibration.

By the same argument, we see that $f_j$ is a birationally isotrivial fibration for each $j\in\set{1,\dots,m}$.
We denote by $F_j$ its general fiber; since $X$ is of general type, so is $F_j$.
\smallskip

We have now constructed the varieties $F_j$ in the statement of the theorem.
It remains to see that there are finite groups $G_j$ acting faithfully on $F_j$ such that $F_j/G_j$ is birational to $K_j$ and $X$ is birational to the quotient $(F_1\times\cdots\times F_m)/G$ for some subgroup $G$ of $G_1\times\cdots \times G_m$.

\subsubsection*{\bf Step 4}
{\em The finite groups $G_1,\dots , G_m$  and  the subgroup $G$ of  $G_1\times\cdots \times G_m$.}\\
The following lemma allows us to characterize varieties which are finite group quotients of a product of varieties and finishes the proof of Theorem \ref{thm:minimal}.

\begin{lemm}\label{lem:str}
Let $f\colon X\to V_1\times \cdots \times V_m$ be a generically finite and surjective morphism between normal projective varieties.
Assume that $X$ is of general type and that, for each $j\in\set{1,\dots,m}$, there is a commutative diagram\footnote{In this lemma, $\widehat{V_j}$ means that the factor $V_j$ is omitted in the cartesian product.}
\[
\xymatrix{
X\ar@{->>}@/^1.5pc/[rr]^{g_j}\ar@{->>}[r]^(.45)f\ar@{->>}[d]_{f_j}\ar@{->>}[rd]_(.35)\varphi & V_1\times \cdots \times V_m\ar@{->>}[d]\ar@{->>}[r]& V_j\\
X_j \ar@{->>}[r] &V_1\times \cdots\times\widehat{V}_j\times\cdots \times V_m,
}
\]
where $X\stackrel{f_j}{\tto} X_j$ is the Stein factorization of $\varphi$, the variety $X_j$ is of general type, $f_j$ is a birationally isotrivial fibration with general fiber $F_j$, and $g_j$ is a fibration.

Then, there exists a finite group $G_j$ acting faithfully on $F_j$ such that $F_j/G_j$ is birational to $V_j$.
Moreover, $X$ is birational to a quotient $(F_1\times\cdots\times F_m)/G$, where $G$ is a subgroup of $G_1\times\cdots \times G_m$ with surjective projections $G\tto G_j$.
\end{lemm}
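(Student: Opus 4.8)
The plan is to reconstruct $X$ as a quotient of $F_1\times\cdots\times F_m$ by assembling, through the Galois theory of function fields, the separate isotrivial structures of the $m$ fibrations $f_j$.

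\emph{Monodromy groups.} Since $X$ is of general type, so is each general fibre $F_j$ of $f_j$, and therefore $\Bir(F_j)$ is a finite group. Reading the birational isotriviality of $f_j$ through the base change in the definition recalled in the Notation, the family $f_j$ is classified by a monodromy homomorphism into $\Bir(F_j)$; I would let $G_j\subseteq\Bir(F_j)$ be its image, acting faithfully on a fixed birational model of $F_j$, and let $X_j'\to X_j$ be the associated connected Galois cover of group $G_j$ for which $X\times_{X_j}X_j'$ is birational over $X_j'$ to $F_j\times X_j'$, with $G_j$ acting diagonally and $X$ birational to $(F_j\times X_j')/G_j$.

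\emph{Identifying $F_j/G_j$ with $V_j$.} The restriction $h_j:=g_j|_{F_j}\colon F_j\to V_j$ is generically finite (because $f$ is) and $G_j$-invariant (because $g_j$ is a morphism on $X$). Using that $g_j$ is a fibration, hence has connected general fibres, I would pass to the Stein factorization of the $G_j$-invariant morphism $F_j\times X_j'\to V_j$ induced by $g_j$ and take the $G_j$-quotient of its base to identify that base with $V_j$; this forces $G_j$ to act transitively on the general fibre of $h_j$, so that $F_j/G_j\to V_j$ is birational. Thus $F_j\to V_j$ is Galois with group $G_j$ and $F_j/G_j$ birational to $V_j$, which is the first assertion. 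Care is needed here, since maps to $V_j$ may move in positive-dimensional families (for instance by translation when $V_j$ is abelian), so the transitivity has to be extracted from the connectedness of the fibres of $g_j$ rather than from any rigidity of $h_j$.

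\emph{Reduction to a domination statement.} Working inside an algebraic closure of $K:=\C(V_1\times\cdots\times V_m)$, with $K_j:=\C(V_j)$, the previous step makes each $F_j\to V_j$ a $G_j$-Galois cover, so the product cover $F_1\times\cdots\times F_m\to V_1\times\cdots\times V_m$ is Galois with group $G_1\times\cdots\times G_m$; set $P:=\C(F_1\times\cdots\times F_m)=\C(F_1)\cdots\C(F_m)$. It then suffices to prove the inclusion $L\subseteq P$, where $L:=\C(X)$. Granting it, $G:=\mathrm{Gal}(P/L)$ is a subgroup of $G_1\times\cdots\times G_m$, the equality $L=P^{G}$ shows that $X$ is birational to $(F_1\times\cdots\times F_m)/G$, and the surjectivity of each projection $G\tto G_j$ follows from the Galois correspondence together with the fact that, by construction, the general fibre of $f_j$ realizes the full $G_j$-cover $F_j\to V_j$.

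\emph{The main obstacle: proving $L\subseteq P$.} From the trivialization of the first step one has $L\subseteq\C(F_j)\cdot\C(X_j')$ for every $j$, and since $\C(F_j)\subseteq P$ it is enough to show $\C(X_j')\subseteq P$, i.e. that the trivializing base $X_j'$ — a finite cover of $X_j$, which is itself finite over $\prod_{k\neq j}V_k$ — is dominated by $\prod_{k\neq j}F_k$. This is a statement involving $m-1$ factors, so I would argue by induction on $m$ (applying the lemma to $X_j$), the base case $m=1$ being the observation that a generically finite fibration is birational. The genuine difficulty, which I expect to be the heart of the proof, is the descent of the hypotheses to the base: one must verify that $X_j\to\prod_{k\neq j}V_k$ again satisfies the lemma's assumptions, namely that the Stein factorizations of the maps $X_j\to\prod_{l\neq j,k}V_l$ are birationally isotrivial (with general fibre a quotient of $F_k$, hence still dominated by it) and that each $X_j\to V_k$ is a fibration. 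Establishing that the several isotrivial fibrations of $X$ induce isotrivial fibrations on $X_j$ — once more via the finiteness of $\Bir(F_k)$ and the mutual compatibility of the $f_k$ — is the crux; granting it, the induction yields $\C(X_j')\subseteq P$, hence $L\subseteq P$, and the proof concludes.
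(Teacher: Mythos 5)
Your Step 1 (the monodromy construction: finiteness of $\Bir(F_j)$ plus the base-change definition of birational isotriviality gives a Galois cover $X_j'\to X_j$ with group $G_j$ acting on $F_j$ and $X$ birational to $(F_j\times X_j')/G_j$) is sound, and your Galois-theoretic reduction in Step 3 would work if the surrounding steps were in place. But there are two genuine gaps. The first is in Step 2: the assertion that $h_j=g_j|_{F_j}$ is $G_j$-invariant does not follow from ``$g_j$ is a morphism on $X$''. What the construction actually gives is invariance of the induced map $\tilde g_j\colon F_j\times X_j'\dashrightarrow V_j$ under the \emph{diagonal} action: writing $h_{x'}:=\tilde g_j(\cdot,x')$, one gets $h_{gx'}=h_{x'}\circ\rho(g)^{-1}$, which is a statement about the family, not $G_j$-invariance of any individual $h_{x'}$. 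To get invariance (equivalently, to identify the Stein base of $\tilde g_j$ with $F_j$, so that connectedness of the fibres of $g_j$ yields the transitivity you want) you need the family $x'\mapsto h_{x'}$ to be \emph{constant}. For a target of general type this would follow from rigidity of dominant maps, but $V_j$ is precisely not of general type in the intended application (it is a simple abelian variety, and maps to it move in families by translation). This is exactly the difficulty you flag in your own caveat, and the caveat does not resolve it: connectedness of the fibres of $g_j$ only gives transitivity of $G_j$ on the fibres of the Stein base $W\to V_j$, and converting that into transitivity on fibres of $h_j$ again requires $W\cong F_j$, i.e.\ constancy of the family. The second gap is in Step 4: you explicitly leave the descent of the hypotheses to $X_j$ (``the crux'') unproven, so the induction is not available; and even granting it, induction would only dominate $X_j$ by $\prod_{k\neq j}F_k$, whereas your reduction needs $\C(X_j')\subseteq P$, i.e.\ domination of the \emph{trivializing cover} $X_j'$, which is a further compatibility you never establish.

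It is worth seeing how the paper avoids both problems, because it inducts in a different direction. It restricts all the data to a general fibre $\wX_1$ of $g_1\colon X\tto V_1$ (there the hypotheses descend with the \emph{same} fibres $F_j$, after taking Stein factorizations $V_j'\to V_j$, and general type of the new bases follows because their images cover $X_j$), concludes by induction that $\wX_1$ is birational to a quotient of $F_2\times\cdots\times F_m$, and then uses the finiteness of the set of birational classes of general type varieties dominated by a fixed variety to deduce that $g_1$ itself is birationally isotrivial --- a step your proposal skips entirely. The Galois cover is then built on the base side, $F_1'\to V_1$ with group $G_1$ acting by monodromy on $\wX_1$ and $X$ birational to $(F_1'\times\wX_1)/G_1$, and $F_1'$ is identified with the fibre $F_1$ of $f_1$ by rigidity of the family of dominant maps $\wX_1\dashrightarrow X_1$, whose target $X_1$ \emph{is} of general type, so rigidity does apply. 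In short, the paper arranges every rigidity argument to have a general type target; your route needs rigidity for maps to $V_j$, which is exactly where it breaks down.
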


Before giving the proof of the lemma, note that it applies to our situation 
\[
	f\colon X\to K_1\times\cdots\times K_m,
\]
thanks to Step 3 and Lemma \ref{lem:prim}.2), which  ensures that the $g_j\colon X\to K_j$ are fibrations.

\begin{proof}
Let $\wX_1$ be a general fiber of $g_1 \colon  X\tto V_1$ and let $\res{f}{\wX_1}\colon  \wX_1\tto V_2\times\cdots\times V_m$ be the induced generically finite morphism.
For $j\in\set{2,\dots,m}$, denote by $\wX_1\tto V_j'\tto V_j$ the Stein factorization of the natural morphism $\wX_1\tto V_j$.
The induced morphism $f'\colon  \wX_1\to V'_2\times\cdots\times V'_m$ is generically finite and surjective.
For each $j\in\set{2,\dots,m}$, let $\wX_1 \stackrel{f'_j}{\tto} Y_j\tto V'_2\times \cdots\times\widehat{V'_j}\times\cdots \times V'_m$ be the Stein factorization of the natural morphism.
We summarize these constructions in the commutative diagram
\[
\xymatrix
@M=4pt{
\wX_1\ar@{_{(}->}[d]\ar@{->>}[r]^{f'_j}& Y_j\ar@{->>}[r]\ar[d] & V'_2\times \cdots\times\widehat{V'_j}\times\cdots \times V'_m\ar@{->>}[r]\ar[d] & \set{*}\ar@{_{(}->}[d]\\
X\ar@{->>}@/_1.5pc/[rrr]_{g_1}\ar@{->>}[r]^{f_j}&X_j\ar@{->>}[r] &V_1\times \cdots\times\widehat{V}_j\times\cdots \times V_m\ar@{->>}[r]& V_1,
}
\]
where the second and third vertical arrows are finite.
Since the images of the $Y_j$ in $X_j$ cover $X_j$ and $X_j$ is of general type by hypothesis, $Y_j$ is also of general type; similarly, $\wX_1$ is also of general type.
Moreover, $f_j'$ is also a birationally isotrivial fibration with general fiber $F_j$.

The morphism $f'\colon \wX_1\rightarrow V'_2\times\cdots\times V'_m$ satisfies again the hypotheses of the lemma.
Thus, by induction on $m$, we obtain that $\wX_1$ is birational to a quotient of $F_2\times\cdots\times F_m$.
Since a fixed variety can only dominate finitely many birational classes of varieties of general type, $g_1\colon X\to V_1$ is birationally isotrivial.

Thus, after a suitable finite Galois base change $F'_1\rightarrow V_1$ with Galois group $G_1$, where $F_1'$ is normal, we have a birational isomorphism $F'_1\times \wX_1\isomdra F'_1\times_{V_1}X$.
Since $\wX_1$ is of general type, its birational automorphism group
is finite.
The action of $G_1\times \id$ on $F'_1\times_{V_1}X$ therefore induces a birational action of $G_1 $ on $\wX_1$ such that $X$ is birationally the quotient of $F'_1\times \wX_1$ by the diagonal action of $G_1$.

Note that $G_1$ acts on the canonical models of $\wX_1$ and $F'_1$.
After an equivariant resolution of singularities \cite[Theorem 0.1]{aw}, we may assume that $\wX_1$ and $F'_1$ are smooth, still with $G_1$-actions, and that $G_1$ acts faithfully on $F'_1$.
We have the commutative diagram
\begin{equation*}
\xymatrix@M=4pt{
F'_1\times \wX_1\ar@{->>}[r] \ar@/_1.7pc/[rr]_-{\pi} &(F'_1\times \wX_1)/G_1\ar@{-->}[r]^-\sim & X \ar@{->>}[r]^-f \ar@{->>}[d]^-{f_1}& V_1\times\cdots\times V_m\ar@{->>}[d]\\
&& X_1\ar@{->>}[r] &V_2\times\cdots\times V_m.}
\end{equation*}
Let $x\in F'_1$ be a general point; there is a dominant rational map $f_{1x}=f_1\circ\res{\pi}{\set{x}\times \wX_1}\colon  \wX_1\dashrightarrow X_1$.
Since any family of dominant maps between varieties of general type is locally constant, we have $f_{1x}= f_{1y}$ for $x$ and $y$ general points of $F'_1$.
Thus, $f_1$ contracts the image of $F_1'$ in $X$ and $F_1'\isomdra F_1$, hence $F_1\rightarrow V_1$ is a birational Galois cover with Galois group $G_1$.

Similarly, for each $j\in\set{1,\dots,m}$, the map $F_j\rightarrow V_j$ is a birational Galois cover with Galois group $G_j$ and there are dominant maps
\begin{equation*}
\xymatrix{
F_1\times\cdots\times F_m\ar@{-->}[r]\ar@/^2pc/[rr]^{G_1\times\cdots\times G_{m}}_{\mathrm{Galois}} & X \ar[r]& V_1\times\cdots \times V_m.
}
\end{equation*}
Thus there is a subgroup $G\subset G_1\times\cdots\times G_m$ such that $X$ is birational to $(F_1\times\cdots\times F_m)/G$.
Since $g_j\colon X\rightarrow V_j$ is a fibration, the projection $G\rightarrow G_j$ is surjective for each $j\in\set{1,\dots,m}$.
\end{proof}

To finish the proof of Theorem \ref{thm:minimal}, it only remains to prove the injectivity assertion of the projection to the product with two factors missing.
This follows from the minimality assumption: the morphisms $f_{ij}\colon  X\rightarrow Y_{ij}$ are birational, thus a general fiber of $X\rightarrow X_{A_{ij}}$ is birational to $F_i\times F_j$.
This implies that the projections $ \pi_{ij} $ are injective.\qed

\subsection{Divisibility  properties  of the degree of the Albanese morphism}\label{ssec:deg}
The main result of this section is the following theorem.

\begin{theo}\label{thm:deg-chi0}
Let $X$ be a smooth projective variety of general type and maximal Albanese dimension and let $a_X$ be its Albanese morphism.
If $\chi(X, \omega_X)=0$, there exists a prime number $p$ such that $p^2$ divides the degree of $a_X$ onto its image.
\end{theo}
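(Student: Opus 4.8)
The plan is to reduce to the case where $X$ is \emph{minimal primitive} (Definition \ref{def:primitive}) and then read off the divisibility from the explicit group-quotient description provided by Theorem \ref{thm:minimal}.

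First I would reduce to the primitive case. Since $\chi(X,\omega_X)=0$, Lemma \ref{lem:chi0}.4) produces a quotient $A_X\tto B$ whose associated fibration $X\tto X_B$ has general fibre $F_B$ primitive with $\chi(F_B,\omega_{F_B})=0$. Restricting $a_X$ to $F_B$ recovers the Albanese morphism of $F_B$, and comparing fibres in the square relating $X\to A_X$ and $X_B\to B$ gives the multiplicativity $\deg(a_X)=\deg(a_{X_B})\cdot\deg(a_{F_B})$; in particular $\deg(a_{F_B})\mid\deg(a_X)$. Thus it suffices to treat primitive $X$. To pass from primitive to minimal primitive, I would choose, among all varieties $Y$ of general type and maximal Albanese dimension dominated by $X$ through a factorization $X\dashrightarrow Y\to A_X$ of the Albanese morphism, one minimizing $\deg(a_Y)$. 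For any such $Y$ the sheaf $a_{Y*}\omega_Y$ is a direct summand of $a_{X*}\omega_X$, so $V^0(a_{Y*}\omega_Y)\subset V^0(a_{X*}\omega_X)$ stays a proper subset and $\chi(Y,\omega_Y)=0$ by Lemma \ref{lem:chi0}.1); moreover $\deg(a_Y)\mid\deg(a_X)$. The minimal such $Y$ is then minimal primitive, so it is enough to prove $p^2\mid\deg(a_Y)$ for $Y$ minimal primitive. The bookkeeping here -- in particular checking that minimality of the Albanese degree really yields a \emph{primitive}, and not merely factorization-minimal, variety -- is the most delicate part of the reduction.

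Now assume $X$ is minimal primitive, with $a_X$ surjective by Lemma \ref{lem:prim}.1), so that the degree onto the image equals $\deg(a_X)$. Theorem \ref{thm:minimal} gives an integer $m\ge3$, simple abelian varieties $K_1,\dots,K_m$, finite groups $G_j\neq 1$, and a subgroup $G\subset G_1\times\cdots\times G_m$ such that an \'etale cover $\widetilde X\to X$, induced by an isogeny $K_1\times\cdots\times K_m\to A_X$, is birational to $(F_1\times\cdots\times F_m)/G$ with $F_j/G_j$ birational to $K_j$. Since $\widetilde X\to X$ and the isogeny have the same degree, $\deg(a_{\widetilde X})=\deg(a_X)$. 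Comparing the two factorizations of $F_1\times\cdots\times F_m\to K_1\times\cdots\times K_m$ (degree $\prod_j|G_j|$) through $(F_1\times\cdots\times F_m)/G$ (degree $|G|$) yields
\[ \deg(a_X)=\deg(a_{\widetilde X})=\frac{\prod_{j=1}^m|G_j|}{|G|}. \]

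The heart of the proof is then elementary group theory, using the two properties of $G$ in Theorem \ref{thm:minimal}. Surjectivity of each projection $G\tto G_i$ gives $|G_i|\mid|G|$. Injectivity of $\pi_{ij}\colon G\to G_1\times\cdots\times\widehat G_i\times\cdots\times\widehat G_j\times\cdots\times G_m$ gives $|G|\cdot|G_i|\,|G_j|\mid\prod_{l}|G_l|$, that is, $|G_i|\,|G_j|\mid\deg(a_X)$ for all $i<j$. Suppose $\deg(a_X)$ were squarefree. Then for every pair $i<j$ the product $|G_i|\,|G_j|$ is squarefree, forcing $\gcd(|G_i|,|G_j|)=1$; hence the orders $|G_1|,\dots,|G_m|$ are pairwise coprime. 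Combined with $|G_i|\mid|G|$ for all $i$, this gives $\prod_j|G_j|\mid|G|$, while $|G|\mid\prod_j|G_j|$ since $G\subset\prod_jG_j$. Therefore $|G|=\prod_j|G_j|$ and $\deg(a_X)=1$, so $a_X$ is birational and $X$ is birational to the abelian variety $A_X$ -- contradicting that $X$ is of general type. Hence $\deg(a_X)$ is not squarefree, and some prime $p$ satisfies $p^2\mid\deg(a_X)$. The conceptual weight of the argument rests entirely on Theorem \ref{thm:minimal}; given it, the extraction of $p^2$ is this short interplay between the surjectivity of $G\tto G_i$ and the ``two-factors-missing'' injectivity of $\pi_{ij}$.
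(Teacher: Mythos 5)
Your proposal is correct and follows essentially the same route as the paper: reduce to minimal primitive varieties via Lemma \ref{lem:chi0}.4) and induction on dimension, invoke Theorem \ref{thm:minimal}, and extract $p^2$ from the surjectivity of $G\tto G_j$ together with the injectivity of $\pi_{ij}$ (the paper argues directly with a prime factor of $|G_1|$ instead of by contradiction from squarefreeness, but the divisibility relations used are identical). The delicate point you flag is resolved in the paper exactly as your own setup permits: the factorization-minimal $Y$ need not be primitive, and one re-applies the reduction of Lemma \ref{lem:chi0}.4) to $Y$, passing to a primitive fiber of strictly smaller dimension whose Albanese degree divides $\deg(a_Y)$, so the dimension induction and the degree minimization must be interleaved rather than performed once each.
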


By Proposition \ref{prop:sing-albanese}, if $X$ is a rational cohomology torus of general type with rational singularities, we have $\chi(X, \omega_X)=\chi(X', \omega_{X'})=0$ for any desingularization $ X'\rightarrow X$.
Thus Theorem \ref{thm:deg-rct} in the introduction directly follows Theorem \ref{thm:deg-chi0}.

\begin{proof}
We first reduce to the case of minimal primitive varieties (see Definition \ref{def:primitive}) using induction on the dimension.
Then,  we apply   Theorem \ref{thm:minimal} and study the numerical properties  of the degree of the Albanese morphism.\phantom\qedhere
\end{proof}

\subsubsection*{\bf Step 1} {\em Reduction to minimal primitive varieties.}\\
We may assume that $X$ is primitive with $\chi(X, \omega_X)=0$.
Otherwise, by Lemma \ref{lem:chi0}.4) (see also \eqref{eqn:X_B}), there exists a quotient $A_X\tto B:=A_X/K$ such that the general fiber $F$ of the induced fibration $X\tto X_B$ is primitive with $\chi(F ,\omega_{F})=0$.
The restriction $\res{a_X}{F}\colon  F\rightarrow K$ factors through the (surjective) Albanese morphism of $F$ and we can argue by induction on the dimension.

Moreover, if $a_{X} $ factors as $X\dashrightarrow Y\to A_X$ through a variety of general type $Y$, after birational modifications, we may assume that we have morphisms between smooth projective varieties
\[a_{X}\colon  X\tto Y\xrightarrow{\ a_Y\ } A_{X}=A_Y.\]
Therefore, we can replace $X$ by $Y$ and study the structure of $a_Y$.
Note that $Y$ may not be {primitive} and we need to reapply induction on the dimension as before.
Finally, we get an $X$ which is a minimal primitive variety of general type.

\medskip

The structure of $a_X$ remains the same after taking abelian \'etale covers of $X$.
Thus, using Theorem \ref{thm:minimal}, we can assume that $X$ is birational to
\[(F_1\times\cdots\times F_m)/G,\]
where $F_j$ is a smooth projective variety acted on faithfully by the finite group $G_j$ such that $F_j/G_j$ is birational to a simple abelian variety $K_j$,
$G$ is a subgroup of $G_1\times\cdots\times G_m$,
and
\[A_X= K_1\times\cdots\times K_m.\]
Furthermore, we can assume that for all $1\leq i<j\leq m$, the projection $G\rightarrow G_1\times\cdots\times \widehat{G}_j\times\cdots\times \widehat{G}_j\times \cdots \times G_m$ is injective and the projection $G\tto G_j$ is surjective.

\subsubsection*{\bf Step 2} {\em Computation of $\deg (a_X)$.}\\
Set $g:=|G|$.
Let $j\in\set{1,\dots,m}$ and set
$g_j:=|G_j|$.
Since $A_X=K_1\times\cdots\times K_m$ and $K_j$ is birational to $F_j/G_j$, we have
\[\deg (a_X) =\frac{1}{g}\prod_{1\leq j\leq m}g_j.\]
Moreover, since the projection $G\tto G_j$ is surjective, we have $g_j\mid g$, hence
\begin{equation}\label{formp}
\deg (a_X)\,\Big|\prod_{2\leq j\leq m}g_j.
\end{equation}
Finally, since the projections $G\rightarrow G_1\times\cdots\times \widehat{G}_j\times\cdots\times \widehat{G}_k\times \cdots \times G_m$ are injective, we have
\begin{equation}\label{formp1}
g_jg_k\mid \deg (a_X) \qquad \text{ for all }1\leq j< k\leq m.
\end{equation}

Let now $p$ be a prime factor of $g_1$.
Then $p\mid \deg (a_X)$ by \eqref{formp1}, hence $p\mid g_j$ for some $j\in\set{2,\dots,m}$ by \eqref{formp}, and $p^2\mid g_1g_j\mid \deg (a_X)$ by \eqref{formp1} again.
This finishes the proof of Theorem \ref{thm:deg-chi0}.\qed

\bigskip
Given a smooth projective variety $X$ of general type, of maximal Albanese dimension, and primitive with $\chi(X,\omega_X)=0$, we know by Theorem \ref{thm:deg-chi0} that $p^2\mid \deg (a_X)$ for some prime number $p$.
Using the proofs of Theorems \ref{thm:deg-chi0} and   \ref{thm:minimal}, we study the extremal case   $\deg (a_X)=p^2$.

\begin{theo}\label{thm:extremal-chi}
Let $X$ be a smooth projective variety of general type, of maximal Albanese dimension, and $\chi(X,\omega_X)=0$.
If $\deg (a_X)=p^2$ for some prime number $p$, the morphism $a_X$ is birationally a $( \Z/p\Z)^2$-cover of its image.
\end{theo}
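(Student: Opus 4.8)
The plan is to reuse the reduction and the structure theorem from the proofs of Theorems \ref{thm:deg-chi0} and \ref{thm:minimal}, and to feed the extremal hypothesis $\deg(a_X)=p^2$ into the group-theoretic estimates \eqref{formp} and \eqref{formp1}. First I would reduce to the case where $X$ is minimal primitive. In a birational factorization $X\to Y\to A_X$ through a variety of general type one has $\deg(a_X)=\deg(X\to Y)\cdot\deg(a_Y)$, while Theorem \ref{thm:deg-chi0} forces $\deg(a_Y)$ to be divisible by the square of a prime; since $\deg(a_X)=p^2$, this is only compatible with $\deg(a_Y)=p^2$ and $X\to Y$ birational, so $X$ is automatically minimal once the degree equals $p^2$. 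Likewise, passing to the primitive general fiber $F$ of the fibration $X\to X_B$ produced by Lemma \ref{lem:chi0}.4), the same divisibility shows $\deg(a_F)=p^2=\deg(a_X)$, so that $a_X$ restricts to $a_F$ without any extra degree and the cover structure of $a_X$ is determined by that of $a_F$; this lets me assume $X$ itself is minimal primitive.

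Next I would apply Theorem \ref{thm:minimal} to write, after an abelian \'etale cover $\widetilde X\to X$ induced by an isogeny $K_1\times\cdots\times K_m\to A_X$, that $\widetilde X$ is birational to $(F_1\times\cdots\times F_m)/G$ with $G\subset G_1\times\cdots\times G_m$, $g_j:=|G_j|\ge2$, $g:=|G|$, and $\deg(a_X)=\tfrac1g\prod_{j=1}^m g_j$. Feeding $\deg(a_X)=p^2$ into \eqref{formp1} gives $g_jg_k\mid p^2$ for all $j<k$; combined with $g_j\ge2$ and $m\ge3$ this forces $g_j=p$ for every $j$. Hence each $G_j\cong\Z/p\Z$, the ambient group is $G_1\times\cdots\times G_m\cong(\Z/p\Z)^m$, and $g=p^{m-2}$. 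Since the projection $\pi_{ij}\colon G\to(\Z/p\Z)^{m-2}$ (omitting two factors) is injective between groups of equal order $p^{m-2}$, it is an isomorphism; in particular the quotient $(G_1\times\cdots\times G_m)/G$ is an elementary abelian $p$-group of order $p^2$, hence isomorphic to $(\Z/p\Z)^2$.

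Finally, because $G_1\times\cdots\times G_m$ is abelian, $G$ is normal, so the Galois cover $F_1\times\cdots\times F_m\to K_1\times\cdots\times K_m$ descends to a Galois cover $\widetilde X\to A_{\widetilde X}$ with group $(G_1\times\cdots\times G_m)/G\cong(\Z/p\Z)^2$; thus $a_{\widetilde X}$ is birationally a $(\Z/p\Z)^2$-cover. Since $\widetilde X=X\times_{A_X}A_{\widetilde X}$, this $(\Z/p\Z)^2$-structure should descend through the finite \'etale map $\widetilde X\to X$ to show that $a_X$ itself is birationally a $(\Z/p\Z)^2$-cover of its image $A_X$. I expect the main obstacle to be precisely this bookkeeping of the reductions: verifying that the extremal hypothesis propagates through the primitive-fiber reduction and through the \'etale cover of Theorem \ref{thm:minimal}, so that the $(\Z/p\Z)^2$-cover structure established on $a_{\widetilde X}$ (or on $a_F$) genuinely descends to $a_X$, and not merely to an \'etale cover or to a fiber of it.
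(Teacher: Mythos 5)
Your proposal is correct and is essentially the paper's own proof: the same reduction to the minimal primitive case via the divisibility statement of Theorem \ref{thm:deg-chi0} (which, as in the paper, forces $\deg(a_F)=p^2$, makes $a_{X_B}$ birational onto its image, and makes $X$ automatically minimal), followed by Theorem \ref{thm:minimal} and the relations \eqref{formp}--\eqref{formp1}, which give $g_jg_k\mid p^2$ and hence $G_j\simeq \Z/p\Z$ for every $j$ and a quotient group $(G_1\times\cdots\times G_m)/G\simeq(\Z/p\Z)^2$. The ``main obstacle'' you flag at the end --- descending the $(\Z/p\Z)^2$-cover structure from $a_{\widetilde{X}}$ (or from the geometric generic fiber) back to $a_X$ itself --- is treated no more explicitly in the paper, whose proof simply asserts that $a_X$ is a $(\Z/p\Z)^2$-cover, relying on the claim made in Step 1 of the proof of Theorem \ref{thm:deg-chi0} that the structure of $a_X$ is unchanged under abelian \'etale covers; so your careful bookkeeping identifies exactly the point where the paper's own argument is tersest, rather than a step you are missing relative to it.
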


\begin{proof}
We use the same notation as in Theorem \ref{thm:minimal}.

There exists by Lemma \ref{lem:chi0}.4) a quotient $A_X\tto B=A_X/K$ such that the general fiber $F$ of the induced fibration $ X\tto X_B$ is primitive with $\chi(F,\omega_F)=0$.
There is a factorization
$$\res{a_X}{F} \colon F\stackrel{a_F}{\lthra} A_F\stackrel{h}{\lra} K \hookrightarrow A_X.
$$
On the other hand, we have
$$p^2=\deg(a_X)=\deg(h a_F)\deg(X_B\xrightarrow{\ a_{X_B}\ } B).$$
By Theorem \ref{thm:deg-chi0}, $\deg(a_F)$ is divisible by the square of a prime number.
It follows that this prime number must be $p$
and that $a_{X_B}$ is birational onto its image.

Let $\eta\in X_B$ be the generic point.
The geometric generic fiber $X_{\bar{\eta}}$ of $X\tto X_B$ is then primitive and satisfies $\chi(X_{\bar{\eta}},\omega_{X_{\bar{\eta}}})=0$ and $\deg(X_{\bar{\eta}}\to A_{\bar{\eta}})=p^2$.
We are therefore reduced to the case where $X$ is primitive.

Note that $a_X\colon  X\rightarrow A_X$ is {\em minimal} (see Definition \ref{def:primitive}).
We can therefore apply Theorem \ref{thm:minimal}.
Keeping its notation, we see $G$ has index $\deg(a_X)=p^2$ in $G_1\times\cdots\times G_m$; since $\pi_{ij}$ is injective whenever $1\leq i<j\leq m$, we obtain that each $G_j$ is isomorphic to $ \Z/p\Z$ and that $a_X\colon  X\rightarrow A_X$ is a $( \Z/p\Z)^2$-cover.
\end{proof}

By Proposition \ref{prop:sing-albanese}, we get the following corollary.

\begin{coro} \label{cor:extremal-rct}
Let $X$ be a projective variety of general type with rational singularities.
If $X$ is a rational cohomology torus and $\deg (a_X)=p^2$ for some prime number $p$, then $a_X$ is a $( \Z/p\Z)^2$-cover.
\end{coro}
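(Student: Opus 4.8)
The plan is to reduce to the smooth case treated in Theorem~\ref{thm:extremal-chi} by passing to a desingularization, and then to promote the resulting \emph{birational} description of the Albanese morphism to an honest Galois cover, using the normality of $X$ together with the finiteness of $a_X$.

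First I would fix a desingularization $\mu\colon X'\to X$. By Proposition~\ref{prop:sing-albanese}, we have $\chi(X',\omega_{X'})=0$, the Albanese morphism factors as $a_{X'}=a_X\circ\mu$, and $A_{X'}=A_X$. Since $X$ is of general type, so is $X'$ (by our convention on general type). Moreover $a_X$ is finite by Proposition~\ref{prop:albanese}, so $a_{X'}=a_X\circ\mu$ is generically finite and $X'$ has maximal Albanese dimension. Because $\mu$ is birational, the degree of $a_{X'}$ onto its image equals that of $a_X$, namely $p^2$; and since the finite morphism $a_X$ is surjective, the image of $a_{X'}$ is all of $A_X$.

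Applying Theorem~\ref{thm:extremal-chi} to $X'$, the morphism $a_{X'}$ is birationally a $(\Z/p\Z)^2$-cover of $A_X$. As $X'$ and $X$ have the same function field over $A_X$, this says precisely that the field extension $\C(X)/\C(A_X)$ is Galois with group $G:=(\Z/p\Z)^2$, where $\abs{(\Z/p\Z)^2}=p^2=\deg(a_X)$.

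The main obstacle is the final step: upgrading this birational statement to the assertion that $a_X$ itself is a $(\Z/p\Z)^2$-cover. Here I would use that $a_X\colon X\to A_X$ is finite and $X$ is normal, so that $a_{X*}\cO_X$ is the integral closure of $\cO_{A_X}$ in $\C(X)$ and $X=\cSpec_{A_X}(a_{X*}\cO_X)$. The group $G$ acts on $\C(X)$ fixing $\C(A_X)$, hence preserves this integral closure and therefore acts on $X$ by automorphisms over $A_X$; this action is faithful. Taking $G$-invariants, $(a_{X*}\cO_X)^G$ is the integral closure of $\cO_{A_X}$ in $\C(X)^G=\C(A_X)$, which is $\cO_{A_X}$ itself since the abelian variety $A_X$ is normal. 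Thus $X/G=\cSpec_{A_X}\bigl((a_{X*}\cO_X)^G\bigr)=A_X$, so that $a_X$ is the quotient morphism by $G=(\Z/p\Z)^2$, i.e.\ a $(\Z/p\Z)^2$-cover, as claimed.
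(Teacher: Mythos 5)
Your proof is correct and follows essentially the same route as the paper, which deduces the corollary in one line from Theorem \ref{thm:extremal-chi} applied to a desingularization, via Proposition \ref{prop:sing-albanese}. Your final step --- upgrading the birational $(\Z/p\Z)^2$-cover to an honest one by noting that the finite morphism $a_X$ from the normal variety $X$ realizes $X$ as the normalization of $A_X$ in the Galois extension $\C(X)/\C(A_X)$ --- is precisely the detail the paper leaves implicit, and you carry it out correctly.
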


\subsection{Simple factors of the Albanese variety}
Using the description of minimal primitive varieties of general type with $\chi=0$, we obtain restrictions on the number of simple factors of their Albanese varieties (which we already know is $\ge3$ by  Lemma \ref{lem:chi0}.1)).

\begin{prop}\label{prop:factors}
Let $X$ be a smooth projective variety of general type, of maximal Albanese dimension, with $\chi(X, \omega_X)=0$.
If $p\mid \deg (a_X)$ is the smallest prime divisor of the degree of the Albanese map $a_X$, the Albanese variety $A_X$ has at least $p+1$ simple factors.
\end{prop}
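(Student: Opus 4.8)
The plan is to reduce to the case where $X$ is minimal primitive (Definition~\ref{def:primitive}) and then feed the normal form of Theorem~\ref{thm:minimal} into an elementary counting argument; the smallest prime $p$ will enter only through the divisibility $g_l\mid\deg(a_X)$.

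I would first run the reduction of Theorem~\ref{thm:deg-chi0}, keeping track of the number of simple factors. If $X$ is not primitive, Lemma~\ref{lem:chi0}.4) gives a fibration $X\to X_B$ whose general fibre $F$ is primitive with $\chi(F,\omega_F)=0$ and $\dim F<\dim X$; as $\res{a_X}{F}$ is generically finite onto the fibre $K$ of $A_X\to B$ and $F$ is primitive, the map $A_F\to K$ is an isogeny, so by Poincaré reducibility $A_X\sim A_F\times B$ and $m\ge m_F$, while $\deg(a_F)\mid\deg(a_X)$ gives $p\le p_F$. Thus the statement for $F$ (by induction on the dimension) yields $m\ge m_F\ge p_F+1\ge p+1$. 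If instead $X$ is primitive but not minimal, I replace $X$ by the general type variety $Y$ with $A_Y=A_X$ through which $a_X$ factors; here $m$ is unchanged, $\deg(a_Y)\mid\deg(a_X)$, and $\chi(Y,\omega_Y)=0$ since $a_{Y*}\omega_Y$ is a direct summand of $a_{X*}\omega_X$, so $V^0(a_{Y*}\omega_Y)\subseteq V^0(a_{X*}\omega_X)\subsetneq\widehat{A_X}$. As the degree strictly drops at each such step (re-entering the primitive reduction if $Y$ is not primitive), we are reduced to $X$ minimal primitive.

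Now I would apply Theorem~\ref{thm:minimal}: after an abelian étale cover, $A_X=K_1\times\cdots\times K_m$ with $K_j$ simple, $X$ is birational to $(F_1\times\cdots\times F_m)/G$ with $G\le G_1\times\cdots\times G_m$, and, writing $h_j\colon F_j\to K_j$ for the induced cover,
\[
a_{X*}\omega_X=\bigoplus_{\lambda\in P}\bigotimes\nolimits_j\mathcal F_{j,\lambda_j},\qquad P:=\bigl\{(\chi_j)\in\textstyle\prod_j G_j^\vee:\ \prod_j\chi_j\big|_G=1\bigr\},
\]
where $\mathcal F_{j,\chi}$ is the $\chi$-isotypic part of the generic vanishing sheaf $h_{j*}\omega_{F_j}$ and $|P|=\deg(a_X)$. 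The injectivity of the projections $\pi_{ij}$ forces $G\cap G_l=\{1\}$, hence $P\to G_l^\vee$ is surjective; so $g_l:=|G_l|=[P:Z_l]$ with $Z_l:=\ker(P\to G_l^\vee)$, and in particular $g_l\mid|P|=\deg(a_X)$. Consequently every prime factor of $g_l$ is $\ge p$, so $g_l\ge p$.

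The decisive step is to rewrite $\chi(X,\omega_X)=0$ as a covering of $P$. By Lemma~\ref{lem:chi0}.1) it means $V^0(a_{X*}\omega_X)\subsetneq\widehat{A_X}$, and since $\widehat{A_X}=\prod_j\widehat{K_j}$ and $V^0$ of an external tensor product is a product, $V^0(a_{X*}\omega_X)=\bigcup_{\lambda\in P}\prod_j V^0(\mathcal F_{j,\lambda_j})$. Here $V^0(\mathcal F_{j,1})=\{0_{\widehat{K_j}}\}$, and because $K_j$ is simple every nontrivial summand is either positive (with $V^0=\widehat{K_j}$) or flat; the flat case would mean $h_j$ factors through a nontrivial étale cover of $K_j$, which I may absorb into the étale reduction without changing $m$, $\deg(a_X)$ or $p$. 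Granting this, $V^0(\mathcal F_{j,\chi})=\widehat{K_j}$ for all $\chi\ne1$, so $V^0(a_{X*}\omega_X)\subsetneq\widehat{A_X}$ is exactly the statement that no $\lambda\in P$ has all coordinates nontrivial, i.e. $P=\bigcup_{l=1}^m Z_l$. This positivity/flatness dichotomy, resting on the simplicity of the $K_j$ and on generic vanishing, is the main obstacle. The conclusion is then immediate: all $Z_l$ are proper and contain $0$, so
\[
|P|\le 1+\sum_{l=1}^m\bigl(|Z_l|-1\bigr)=1-m+|P|\sum_{l=1}^m\frac1{g_l},
\]
whence $\sum_{l=1}^m 1/g_l>1$; with $g_l\ge p$ this gives $m/p>1$, that is $m\ge p+1$.
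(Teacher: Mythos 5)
Your proposal has the same skeleton as the paper's proof: reduce to the minimal primitive case, apply Theorem~\ref{thm:minimal}, translate $\chi(X,\omega_X)=0$ into the statement that every character $\lambda\in P$ has a trivial coordinate, and count. Your final inequality $|P|\le 1+\sum_{l}\bigl(|Z_l|-1\bigr)$ is literally the paper's inequality \eqref{eqn:dega}, since $|P|=\deg(a_X)$ and $|Z_l|=\deg(a_{X_{B_l}})$; the reduction step (which you actually carry out with more care about how $m$ and $p$ behave under the induction than the paper does) and the group theory ($G\cap G_l=\set{1}$ from injectivity of the $\pi_{ij}$, hence $P\tto G_l^\vee$ surjective, $g_l\mid\deg(a_X)$ and so $g_l\ge p$) are correct. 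The step you yourself flag as ``the main obstacle'' is, however, a genuine gap, and your proposed repair does not work as described.

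The statement you need is that, for the model produced by Theorem~\ref{thm:minimal}, $V^0(\cF_{j,\chi})=\PK_j$ for every $\chi\ne 1$. First, the ``positive or flat'' dichotomy for the pieces $\cF_{j,\chi}$ on the simple $K_j$ is not a formal consequence of simplicity plus Green--Lazarsfeld/Simpson; it is exactly the Decomposition Theorem of \cite[Theorem 1.1 and Theorem 3.5]{CJ} that the paper invokes (and the either/or form is only available because each $\cF_{j,\chi}$ has rank one, $F_j/G_j$ being birational to $K_j$; in general a summand mixes both types), so at a minimum this must be cited, as the paper does. Second, and more seriously, the absorption of the flat case fails: if $\cF_{j,\chi}=Q$ is nontrivial torsion, then indeed $h_j^*Q\isom\cO_{F_j}$ (Serre duality together with $R^{i}h_{j*}\omega_{F_j}=0$ for $i>0$), so $h_j$ factors through the isogeny $K'_j\to K_j$ trivializing $Q$ --- but for precisely that reason $F_j\times_{K_j}K'_j$ is \emph{disconnected}, so the base change of $X$ by $K_1\times\cdots\times K'_j\times\cdots\times K_m\to A_X$ that you want to ``absorb'' may be disconnected; and when it is connected, the subgroup $G$, the surjectivity of $G\tto G_l$, the injectivity of the $\pi_{ij}$, the identification of the Albanese variety of the cover, and the degree of its Albanese map all change and must be re-established. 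Nothing in your sketch shows that $m$, $\deg(a_X)$ and $p$ are preserved, nor that the process terminates. In fact no further covers are needed: the \'etale reduction already built into Theorem~\ref{thm:minimal} (after which every element of $S_{a_X}$ contains $0_{\PA_X}$) rules the flat case out. Indeed, if some $\cF_{j,\chi}$ with $\chi\ne1$ were a torsion line bundle $Q$, then choosing $\lambda\in P$ with $\lambda_j=\chi$ and using the K\"unneth description of the cohomological support loci of the corresponding summand of $a_{X*}\omega_X$, one produces a component of some $V^{i}(a_{X*}\omega_X)$ of codimension $i$ passing through a point with $j$-coordinate $Q^{-1}$, i.e.\ an element of $S_{a_X}$, which forces $Q=\cO_{K_j}$; and $\cF_{j,\chi}=\cO_{K_j}$ with $\chi\ne1$ is impossible because $h^{\dim K_j}(K_j,h_{j*}\omega_{F_j})=h^0(F_j,\cO_{F_j})=1$ by connectedness of $F_j$. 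This is, in substance, what the paper's appeal to \cite[Lemma 3.7]{CJ} accomplishes; without this (or an equivalent argument) your covering statement $P=\bigcup_l Z_l$, and hence the whole counting step, is unproven.
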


\begin{proof}
We argue by induction on $\dim (X)$.
As in Step 1 of the proof of Theorem \ref{thm:deg-chi0}, we can assume that $X$ is minimal primitive (in the sense of Definition \ref{def:primitive}).
Then, applying Theorem \ref{thm:minimal}, we may assume, after taking an \'etale cover of $X$, that $X$ is birational to a quotient $(F_1\times\cdots\times F_m)/G$, the abelian variety $A_X$ has $m$ simple factors, $V^0(\omega_X)$ has $m+1$ irreducible components: $\PB_1:=\{0_{\PK_1}\}\times \PK_2\times\cdots\times\PK_m$, $\PB_2:=\PK_1\times\{0_{\PK_2}\}\times\cdots\times\PK_m,\ldots,\PB_m:=\PK_1\times\cdots\times \PK_{m-1}\times\{0_{\PK_m}\}$, and all elements of $S_{a_X}$ contain the origin $0_{\PA_X} $.

By the Decomposition Theorem \cite[Theorem 1.1 and Theorem 3.5]{CJ}, we have
\[a_{X*}\omega_X=\bigoplus_{\PB\in S_{a_X}} p_B^*\cF_B,\]
where $p_B\colon A_X\tto B$ is the natural quotient and $\cF_B$ is a coherent sheaf supported on $B $.

On the other hand, by \cite[Lemma 3.7]{CJ}, we have, for each $j\in\set{1,\dots,m}$,
\[
a_{X_{B_j}*}\omega_{X_{B_j}}=\bigoplus_{\PB\in S_{a_X},\, \PB\subset\PB_j} p_{B,j}^*\cF_B ,
\]
where $p_{B,j}\colon B_j\tto B$ is the natural quotient, hence $\deg (a_{X_{B_j}})=1+\sum_{\PB\in S_{a_X},\,\PB\subset\PB_j}\rank( \cF_B)$.
Since all elements of $S_{a_X}$ are contained in $\bigcup_{1\leq j\leq m}\PB_j$,
we have 
\begin{equation}\label{eqn:dega}
\deg (a_X)-1\le \sum_{1\leq j\leq m}\bigl( \deg(a_{X_{B_j}})-1\bigr).
\end{equation}
With the notation of the proof of Theorem \ref{thm:minimal} ($g =|G|$ and
$g_j =|G_j|$), we get
\[\deg (a_X)=\frac{1}{g}\prod_{1\leq j\leq m}g_j\qquad\text{ and }\qquad\deg(a_{X_{B_k}})=\frac{1}{g}\prod_{j\neq k}g_j.\]
We may assume that $\deg(a_{X_{B_1}})$ is maximal among all $ \deg(a_{X_{B_k}})$.
Using \eqref{eqn:dega}, we then obtain $m\deg(a_{X_{B_1}})\ge \deg (a_X)+m-1>g_1 \deg(a_{X_{B_1}})$.
Hence $m\geq g_1+1\geq p+1$.
\end{proof}

By Proposition \ref{prop:sing-albanese}, we obtain Theorem \ref{thm:factors-rct} in the introduction as a direct corollary.

\section{Construction of examples}\label{sec:examples}
We show that the varieties in Theorem \ref{thm:extremal-chi} and Corollary \ref{cor:extremal-rct} actually occur.
The lower bounds on the degree of the Albanese morphisms in Theorem \ref{thm:deg-rct}, Theorem \ref{thm:deg-chi0} and Proposition \ref{prop:factors} are therefore optimal.

We first construct, for every prime $p$, a series of examples of minimal primitive varieties $X$ of general type with $\chi(X,\omega_X)=0$, such that $X$ is a finite quotient of a product of  $p+1$  curves and the Albanese morphism $a_X$ is a   $(\Z/p\Z )^2$-cover.
Then we show that a slight modification of this construction leads to rational cohomology tori.

\begin{exam}[Minimal primitive varieties  of general type with $\chi =0$ whose Albanese morphisms are   $(\Z/p\Z )^2 $-covers of a product of $p+1$ elliptic curves]\label{exam:chi0-p2}
Let $p$ be a prime number.
For each $j\in\set{1,\dots, p+1}$, let $\rho_j\colon  C_j\rightarrow E_j$ be a $(\Z/p\Z)$-cover, where $C_j$ is a smooth projective curve of genus $g_j\geq 2$ and $E_j$ an elliptic curve.
 For each $j\in\set{1,\dots, p+1}$, write 
\[\rho_{j*}\omega_{C_j}=\cO_{E_j}\oplus\bigoplus_{1\leq m\leq p-1}\cL_{\chi^m_j},\]
where $\chi_j\in (\Z/p\Z)^\vee$ is a generator and $\cL_{\chi^m_j}=\cL_{\chi_j}^{\otimes m}$ is an ample line bundle on $E_j$, for each   $m\in\set{1,\dots, p-1}$.
Let $\rho\colon  V=C_1\times\cdots\times C_{p+1}\rightarrow A=E_1\times\cdots \times E_{p+1}$ be the corresponding $G $-cover, where $G:= (\Z/p\Z)^{p+1}$.

We now construct a subgroup $H$ of $G$ isomorphic to $ (\Z/p\Z)^{p-1} $.
Actually, we will describe dually the quotient morphism of character groups.
Let $p_j\colon  G\rightarrow \Z/p\Z$ be the projection to the $j$-th factor, let $\chi\in (\Z/p\Z)^\vee$ be a generator, and set $\chi_j:=\chi\circ p_j\colon  G\rightarrow \C^*$.
Then $ (\chi_1,\dots,\chi_{p+1})$ is a generating family of $G^\vee\isom (\Z/p\Z)^{p+1}$.
On the other hand, let $ (e_1,\dots,e_{p-1}) $ be the canonical basis for $(\Z/p\Z)^{p-1}$.
Define the quotient morphism $G^\vee\tto H^\vee$ as follows
\begin{eqnarray*}
\pi\colon  G^\vee&\lra& (\Z/p\Z)^{p-1}\\
\chi_j&\longmapsto&
\begin{cases}
e_j& \mathrm{ for } \quad1\leq j\leq p-1;\\
\sum_{1\leq j\leq p-1}e_j& \mathrm{ for } \quad j=p;\\
\sum_{1\leq j\leq p-1}je_j & \mathrm{ for } \quad j=p+1.
\end{cases}
\end{eqnarray*}
Let $H$ be the corresponding subgroup of $G$ and set $X:=V/H$.
We consider
\[\rho\colon  V\xrightarrow{\ f\ } X\xrightarrow{\ g\ } A=E_1\times \cdots\times E_{p+1}\]
and compute
$$
\rho_*\omega_V =\ \ \bigoplus_{\tau\in G^\vee}\cL_{\tau}\ \
= \hskip-5mm \bigoplus_{(m_1,\ldots,m_{p+1})\in (\Z/p\Z)^{p+1}} (\cL_{\chi^{m_1}_1}\boxtimes\cdots \boxtimes\cL_{\chi^{m_{p+1}}_{p+1}}).
$$
Moreover, as in the proof of Proposition \ref{prop:construction1}, we have
\begin{eqnarray*}
g_*\omega_X &=&(\rho_*\omega_V)^H = \bigoplus_{\substack{\tau\in G^\vee\\\pi(\tau)=1\in H^\vee}}\cL_{\tau}\\
&=& \hskip-5mm \bigoplus_{\substack{(m_1,\ldots,m_{p+1})\in (\Z/p\Z)^{p+1}_{}\\
m_j+m_p+jm_{p+1}=0 {\rm\ for\ all\ } j\in\set{1,\dots, p-1} }} \hskip-10mm (\cL_{\chi^{m_1}_1}\boxtimes\cdots \boxtimes\cL_{\chi^{m_{p+1}}_{p+1}})\\
&=&\bigoplus_{(a, b)\in (\Z/p\Z)^2} (\cL_{\chi^{-a-b}_1} \boxtimes\cL_{\chi^{-a-2b}_2}\boxtimes\cdots \boxtimes\cL_{\chi^{-a-(p-1)b}_{p-1}}\boxtimes \cL_{\chi^a_p}\boxtimes \cL_{\chi^b_{p+1}}).
\end{eqnarray*}
For $a$ and $b$ both non-trivial, there exists a unique $j\in\set{1,\dots, p-1}$ such that $a+jb=0\in \Z/p\Z$.
Thus, $\chi(X, \omega_X)=\chi(A, g_*\omega_X)=0$.

Moreover, we see that
\begin{equation}\label{eqn:v0}
V^0(g_*\omega_X)=\bigcup_{1\leq j\leq p+1}\PE_1\times\cdots\times\{0_{\PE_j}\}\times \cdots\times \PE_{p+1}.
\end{equation}
By \cite[Theorem 1]{ch-pluri}, $X$ is of general type.

By studying the fibrations induced by the components of $V^0(g_*\omega_X)$, we deduce that $X$ is primitive by Lemma \ref{lem:chi0}.4).
Since $\deg (a_X)=p^2$, Theorem \ref{thm:deg-chi0} implies that $X$ is minimal.
\end{exam}

\begin{rema}\label{rmk:exam2}
We saw that $X$ is primitive in the sense of Definition \ref{def:primitive} (see also \cite[Section 6]{CJ}).
However, $g\colon  X\rightarrow A$ is a $(\Z/p\Z)^2$-cover.
This shows that \cite[Conjecture 6.6]{CJ} is false.
Thus, the structure of primitive varieties with $\chi=0$ is more complicated than expected.
\end{rema}

\begin{exam}[Rational cohomology tori of general type with finite quotient singularities    whose Albanese morphisms are   $(\Z/p\Z )^2$-covers of a product of $p+1$ elliptic curves]\label{exam:rt-p2}
Let $G$ be an abelian group and let $A$ be a smooth variety.
Pardini described the building data for $G$-covers from normal varieties to $A$.
A $G$-cover of $A$ corresponds to a family $(\cL_{\tau})_{\tau\in G^\vee}$ of line bundles on $A$ and effective divisors $(D_{\tau,\tau'})_{(\tau,\tau')\in G^\vee\times G^\vee}$ such that  $\cL_{\tau}\otimes\cL_{\tau'} \isom\cL_{\tau\cdot\tau'}(D_{\tau,\tau'})$  (see \cite[Theorem 2.1]{par} for more details).

 Let $p$ be a prime number, set $G:=(\Z/p\Z)^2$, and consider  the $G$-cover $g\colon  X\rightarrow A$ from Example \ref{exam:chi0-p2}.
We have $$g_*\cO_X=\bigoplus_{\tau\in G^\vee} \cL_{\tau} ^{-1} $$
and effective divisors $D_{\tau,\tau'}$ as above.

Let $P_j$ and $P_j'$ be $p$-torsion line bundles on $E_j$ such that their classes $[P_j]$ and $[P_j']$ generate the group $\PE_j[p]\isom (\Z/p\Z)^2$ of $p$-torsion line bundles on $E_j$.
Set $P:=P_1\boxtimes\cdots\boxtimes P_{p+1}$ and $P':=P_1'\boxtimes\cdots\boxtimes P_{p+1}'$.

We pick generators $\tau_1$ and $\tau_2$ of $G^\vee\isom (\Z/p\Z)^2$ and, for $(a, b)\in (\Z/p\Z)^2$, we define
\[\cL'_{\tau_1^a\cdot\tau_2^b}:=\cL_{\tau_1^a\cdot\tau_2^b}\otimes P^{\otimes a}\otimes P^{\prime\otimes b}.\]
The relations  $\cL_{\tau}\otimes\cL_{\tau'} \isom\cL_{\tau\cdot\tau'}(D_{\tau,\tau'})$  still hold for all $(\tau,\tau')\in G^\vee\times G^\vee$.
Thus, by \cite[Theorem 2.1]{par}, we get a $G$-cover $g'\colon  X'\rightarrow A$ such that
\[g_*\cO_{X'}=\bigoplus_{\tau\in G^\vee}\cL_{\tau}^{\prime-1}.\]
Both covers have the same branch divisors $D_{\tau, \tau'}$, so $X'$ has the same singularities as $X$.
Therefore, $X'$ has finite quotient singularities and we have
\[g_*'\omega_{X'_{}}=\bigoplus_{\tau\in G^\vee}\cL'_{\tau}.\]
Hence, $V^0(g_*\omega_{X'_{}})$ still generates $A$, so $X'$ is also of general type by \cite[Theorem 1]{ch-pluri}.

We saw in Example \ref{exam:chi0-p2} that for any $\tau$ non-trivial, $\cL_{\tau}$ is trivial restricted to some $E_j$.
However, since $P_j$ and $P_j'$ generate $\PE_j[p]\isom (\Z/p\Z)^2$, $\cL'_{\tau}$ restricted to $E_j$ is a non-trivial torsion line bundle.
Therefore, $H^k(A, \cL_{\tau}')=0$ for any $k\in \Z$ and $\tau$ non-trivial.
This implies $h^0(X', \omega_{X'})=1$.

Let $A'\rightarrow A$ be the abelian cover induced by $P$ and $P'$.
Since they have the same building data, the Galois covers $X\times_AA'\rightarrow A'$ and $X'\times_AA'\rightarrow A'$ are isomorphic (\cite[Theorem 2.1]{par}).
This shows that $X'\times_AA' $ is a quotient of products of curves
as in Proposition \ref{prop:construction1} and hence so is $X'$.
It follows from Proposition \ref{prop:construction1} that $X'$ is a rational cohomology torus.
\end{exam}

\medskip

We now show that rational cohomology torus of general type with mild singularities exist in any dimensions $\geq 3$.

\begin{exam}[Rational cohomology tori of general type with finite quotient singularities of any dimension $\ge 3$]\label{exam-dim}
For each $j\in\{1,2,3\}$, consider a non-zero \av\  $A_j$,
an ample line bundle $L_j$ on $A_j$, a smooth  divisor $D_j\in |2L_j|$, and a non-trivial order-$2$   line bundle $P_j$ on $A_j$.
Let $X_j\rightarrow A_j$ be the double cover associated with the data $L_j$ and $D_j$, with involutions $\tau_j$, and let $X_j'\rightarrow X_j$ be the \'etale cover defined by $P_j$, with involution $\sigma_j$.
Moreover, let $\tau_j'$ be the lifting of $\tau_j$ to $X_j'$.

Set $G:=(\Z/2\Z)^2$.
Let $H_1$, $H_2$, and $H_3$ be the three non-trivial cyclic subgroups of $G$ and let $\chi_1$, $\chi_2$, and $\chi_3$ be the three non-trivial characters of $G$, with  $\Ker(\chi_j)=H_j$.
We now define the building data to construct a $G$-cover of $A:=A_1\times A_2\times A_3$.
Let $p_j\colon  A\rightarrow A_j$ be the   projections.
We define
\begin{equation*}
\begin{aligned}
L_{\chi_1}&:=P_1\boxtimes L_2\boxtimes (L_3\otimes P_3),\\
L_{\chi_2}&:=(L_1\otimes P_1)\boxtimes P_2\boxtimes L_3,\\
L_{\chi_3}&:=L_1\boxtimes (L_2\otimes P_2)\boxtimes P_3,
\end{aligned}
\qquad\qquad
\begin{aligned}
D_{H_1}&:=p_1^*D_1,\\
D_{H_2}&:=p_2^*D_2,\\
D_{H_3}&:=p_3^*D_3.
\end{aligned}
\end{equation*}
For $\{i, j, k\}=\{1, 2, 3\}$, we have $L_{\chi_i}\otimes L_{\chi_i}\isom \cO_A(D_{H_j}+D_{H_k})$ and $L_{\chi_i}\otimes L_{\chi_j}\isom L_{\chi_k}(D_k)$.
By \cite[Theorem 2.1]{par}, there exists a $G$-cover $f\colon  X\rightarrow A$ such that 
\[f_*\cO_X=\cO_A\oplus L_{\chi_1}^{-1}\oplus L_{\chi_2}^{-1}\oplus L_{\chi_3}^{-1},\]
  with branch locus $D=D_{H_1}+D_{H_2}+D_{H_3}$.
Since $D$ is a normal crossing divisor,   a local computation shows that $X$ has finite group quotient singularities.
In particular, $X$ has rational singularities.
Actually, $X$ is isomorphic to the quotient of $X_1'\times X_2'\times X_3'$ by the automorphism group generated by $\id_1\times \tau'_2\times\sigma_3$, $\sigma_1\times\id_2\times \tau'_3$, $\tau'_1\times\sigma_2\times\id_3$, and $\tau'_1\times\tau'_2\times\tau'_3$.

As in Proposition \ref{prop:construction1}, we set $\Omega_X^s=\iota_*(\Omega_{X_{\mathrm{reg}}}^s)$, where $\iota\colon  X_{\mathrm{reg}}\hookrightarrow X$ is the   open subset of smooth points.
By \cite[Theorem 4.1]{par} or \cite[Proposition 1.2]{pardini2}, we have
\begin{equation*}
\begin{aligned}
f_*\Omega_X^s =\Omega_A^s &\oplus ({\Omega}_A^s(\mathrm{log} (D_{H_2}+D_{H_3}))\otimes L_{\chi_1}^{-1})\\
&\oplus (\Omega_A^s(\mathrm{log} (D_{H_3}+D_{H_1}))\otimes L_{\chi_2}^{-1})\\
&\oplus (\Omega_A^s(\mathrm{log} (D_{H_1}+D_{H_2}))\otimes L_{\chi_3}^{-1}),
\end{aligned}
\end{equation*}
 hence  
\[H^t(X, \Omega_X^s)\simeq H^t(A, \Omega_A^s)  \qquad \text{for all }s, t\geq 0,\] 
and $X$ is a rational cohomology torus of general type with finite quotient singularities.
Moreover, let $Y$ be the quotient of $X_1'\times X_2'\times X_3'$ by the automorphism group generated by $\id_1\times \tau'_2\times\sigma_3$, $\sigma_1\times\id_2\times \tau'_3$, and $\tau'_1\times\sigma_2\times\id_3$.
Then $Y\rightarrow X$ is a double cover and $Y$ is smooth.
\end{exam}

The following example exhibits primitive fourfolds with $\chi=0$ whose Albanese varieties have $4$ simple factors and whose Albanese morphisms have degree $8$, which is not a square.
Thus, we have essentially two different series of examples of primitive varieties with $\chi=0$ whose Albanese varieties have $4$ simple factors:
the minimal varieties provided by Example \ref{exam:chi0-p2} taking $p=3$ and the following non-minimal primitive varieties.

\begin{exam}[Non-minimal primitive fourfolds of general type with $\chi=0$ whose Albanese morphisms are   $(\Z/2\Z )^3 $-covers of a product of $4$ elliptic curves]
\label{exam:nonP2}
Let $\rho_1\colon  C_1\rightarrow E_1$ be a $(\Z/2\Z )^2$-cover, where $C_1$ is a smooth projective curve of genus $g_1\geq 2$ and $E_1$ is an elliptic curve.
Let $\sigma$ and $\tau$ be generators of the Galois group and let $\sigma^\vee,\tau^\vee\in G_1^\vee$ be the dual characters.
For $j\in\set{2, 3, 4}$, let $\rho_j\colon  C_j\rightarrow E_j$ be a $(\Z/2\Z)$-cover with associated involution $\iota_j$, where $C_j$ is a smooth projective curve of genus $g_j\geq 2$ and $E_j$ an elliptic curve.

Thus, we are considering the case where $G_1=(\Z/2\Z )^2$ and $G_2=G_3=G_4=\Z/2\Z$ in Proposition \ref{prop:construction1} or Theorem \ref{thm:minimal}.
We have
\begin{eqnarray*}
\rho_{1*}\omega_{C_1}&=&\cO_{E_1}\oplus \cL_{\sigma^\vee}\oplus \cL_{\tau^\vee}\oplus \cL_{\sigma^\vee\tau^\vee},\\
\rho_{j*}\omega_{C_j}&=&\cO_{E_j}\oplus \cL_{\iota_j^\vee} \qquad \text{for }j\in\{2,3,4\},
\end{eqnarray*}
where $\cL_{\chi^\vee}$ is an ample line bundle corresponding to the character $\chi^\vee$.

We then set $X:= (C_1\times C_2\times C_3\times C_4)\big/ \langle\sigma\times \iota_1\times \iota_2\times \mathrm{id}_{F_3}, \tau\times \iota_1\times \mathrm{id}_{F_2}\times \iota_3\rangle$ and consider the $(\Z/2\Z )^2$-quotient
\[f\colon C_1\times C_2\times C_3\times C_4 \lra X.
\]
With the notation of Theorem \ref{thm:minimal} or Proposition \ref{prop:construction1}, we have $G=(\Z/2\Z )^2$.

Let $g\colon  X\rightarrow A=E_1\times E_2\times E_3\times E_4$ be the morphism such that the composition $\rho$
\[\rho\colon  Z=C_1\times C_2 \times C_3 \times C_4\xrightarrow[\ 4:1\ ]{f} X\xrightarrow[\ 8:1\ ]{g} A,\]
is the quotient by $G_1\times G_2\times G_3\times G_4$.

Abusing the notation, we can describe the quotient ${(G_1\times G_2\times G_3\times G_4)}^\vee\to G^\vee$ by
\begin{equation*}
\begin{aligned}
&\sigma^\vee\mapsto (1,0) \\ &\tau^\vee\mapsto (0,1)
\end{aligned}
\qquad\qquad
\begin{aligned}
&\iota_1^\vee\mapsto (1,0) \\ &\iota_2^\vee\mapsto (0,1)\\ &\iota_3^\vee\mapsto(1,1).
\end{aligned}
\end{equation*}
One checks that
$$
\begin{array}{rccccl}
g_*\omega_X&\isom&(\rho_*\omega_Z)^{G} &\isom&&\cO_A\oplus ( \cO_{E_1}\boxtimes \cL_{\iota_1^\vee}\boxtimes \cL_{\iota_2^\vee}\boxtimes \cL_{\iota_3^\vee})\\
&&&&{}\oplus &( \cL_{\sigma^\vee}\boxtimes\cL_{\iota_1^\vee}\boxtimes \cO_{E_2}\boxtimes \cO_{E_3}) \oplus (\cL_{\sigma^\vee}\boxtimes \cO_{E_1}\boxtimes \cL_{\iota_2^\vee}\boxtimes \cL_{\iota_3^\vee})\\
&&&&\oplus & (\cL_{\tau^\vee}\boxtimes \cO_{E_1}\boxtimes \cL_{\iota_2^\vee}\boxtimes \cO_{E_3}) \oplus (\cL_{\tau^\vee}\boxtimes \cL_{\iota_1^\vee}\boxtimes\cO_{E_2}\boxtimes \cL_{\iota_3^\vee})\\
&&&&\oplus & (\cL_{\sigma^\vee\tau^\vee}\boxtimes \cO_{E_1}\boxtimes \cO_{E_2}\boxtimes \cL_{\iota_3^\vee})\oplus (\cL_{\sigma^\vee\tau^\vee}\boxtimes \cL_{\iota_1^\vee}\boxtimes\cL_{\iota_2^\vee}\boxtimes \cO_{E_3}).
\end{array}
$$
Thus, $\chi(X, \omega_X)=\chi(A, g_*\omega_X)=0$.
Moreover, we obtain
\begin{equation*}
V^0(g_*\omega_X)=\bigcup_{1\leq j\leq 4}\PE_1\times\cdots\times\{0_{\PE_j}\}\times \cdots\times \PE_{4}.
\end{equation*}
By \cite[Theorem 1]{ch-pluri}, $X$ is of general type and, by
studying the fibrations induced by the components of $V^0(g_*\omega_X)$, we obtain that $X$ is primitive by Lemma \ref{lem:chi0}.4).

Note that $X$ is not minimal primitive: if we consider the quotient ${(G_1\times G_2\times G_3\times G_4)}^\vee\to H^\vee\isom (\Z/2\Z )^3$ defined by
\begin{equation*}
\begin{aligned}
&\sigma^\vee\mapsto (1,0,0) \\ &\tau^\vee\mapsto (0,1,0)
\end{aligned}
\qquad\qquad
\begin{aligned}
&\iota_1^\vee\mapsto (1,0,1) \\ &\iota_2^\vee\mapsto (0,1,1)\\ &\iota_3^\vee\mapsto(1,1,1),
\end{aligned}
\end{equation*}
and the varieties $Z:=C_1\times C_2\times C_3\times C_4 $ and $Y:= Z / H$, we have
\[\rho\colon  Z \xrightarrow[\ 4:1\ ]{f} X\xrightarrow[\ 2:1\ ] {}Y \xrightarrow[\ 4:1\ ]{h} A,\]
and one checks that
$$
\begin{array}{rcccl}
h_*\omega_Y&=&(\rho_*\omega_{Z})^{H} &=&\cO_A\oplus (\cL_{\sigma^\vee}\boxtimes \cO_{E_1}\boxtimes \cL_{\iota_2^\vee}\boxtimes \cL_{\iota_3^\vee})\\
&&&&\quad {}\oplus  (\cL_{\tau^\vee}\boxtimes \cL_{\iota_1^\vee}\boxtimes\cO_{E_2}\boxtimes \cL_{\iota_3^\vee})\oplus (\cL_{\sigma^\vee\tau^\vee}\boxtimes \cL_{\iota_1^\vee}\boxtimes\cL_{\iota_2^\vee}\boxtimes \cO_{E_3}).
\end{array}
$$
Thus, $\chi(Y, \omega_Y)=\chi(A, h_*\omega_Y)=0$.
Moreover, we have $V^0(g_*\omega_X)=V^0(h_*\omega_Y)$, so by \cite[Theorem 1]{ch-pluri}, $Y$ is of general type.
\end{exam}

\addresseshere

\newpage

\pagestyle{empty}

\appendix

\section*{Appendix\\ \textbf{\uppercase{Non-existence of smooth rational cohomology tori of general type}}}

\begin{center}
\footnotesize{WILLIAM F. SAWIN}
\end{center}

\begin{abstract}
In this appendix, we resolve the smooth general type case of Catanese's question by
showing that there are no smooth general type rational cohomology tori. The key technical ingredient is a result of Popa and Schnell on one-forms on smooth general type varieties.
\end{abstract}

\begin{theoA}\label{thm:app}
Let $f \colon  X \to A$ be a finite morphism from a smooth projective variety of general type $X$ to an abelian variety $A$, all over $\C$.
Let $n$ be the dimension of $X$. Then
\[
(-1)^n {\chi}_{\rm top}(X) > 0.
\]
\end{theoA}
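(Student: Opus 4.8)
The plan is to detect the sign of ${\chi}_{\rm top}(X)$ through the zero locus of a sufficiently general holomorphic one-form pulled back from $A$, computing it as a top Chern number. After possibly replacing $A$ by the abelian subvariety generated by $f(X)$ (which leaves $f$ finite and $X$ of general type), I may assume $f(X)$ generates $A$, so that $f^*\colon W:=H^0(A,\Omega^1_A)\hookrightarrow H^0(X,\Omega^1_X)$ is injective. Writing $g:=\dim A$ and dualizing, the differential gives a bundle map $df\colon T_X\to f^*T_A=W^*\otimes\cO_X$ which is injective at a general point, since the finite morphism $f$ is generically immersive; equivalently the evaluation $W\otimes\cO_X\to\Omega^1_X$ is generically surjective.

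First I would establish that for a general $\omega\in W$ the one-form $f^*\omega$ has a \emph{finite} zero locus. Consider the incidence variety
\[
Z:=\{(x,[\omega])\in X\times\P(W)\mid (f^*\omega)(x)=0\in\Omega^1_{X,x}\}
\]
with projections $p\colon Z\to\P(W)$ and $q\colon Z\to X$. If some component $Z'$ of $Z$ dominated $\P(W)$ with positive-dimensional general fibre, then $\dim Z'\ge g$. Setting $Y:=\overline{q(Z')}$, the fibre of $q|_{Z'}$ over a general point $x\in Y$ lies in $\P(\ker(df^*_x))$, of dimension $g-1-\rank(df_x)$, whence $\rank(df_x)\le\dim Y-1$. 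But $f|_Y\colon Y\to A$ is again finite, hence generically immersive, so $df_x$ is injective on $T_{Y,x}$ and $\rank(df_x)\ge\dim Y$ --- a contradiction. Thus $p$ has finite general fibre, i.e. $Z(f^*\omega)$ is $0$-dimensional for general $\omega$.

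Next I would invoke the Popa--Schnell theorem: on a smooth projective variety of general type every nonzero holomorphic one-form has a nonempty zero locus. Since $f^*\omega\ne0$ for $\omega\ne0$, the locus $Z(f^*\omega)$ is nonempty for every nonzero $\omega$, so for the general $\omega$ produced above it is finite \emph{and} nonempty. This is the only step where the general type hypothesis enters, and it is exactly what fails for tori, where every one-form is nowhere vanishing and ${\chi}_{\rm top}=0$.

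Finally, for such a general $\omega$ the section $s:=f^*\omega$ of the rank-$n$ bundle $\Omega^1_X$ has $0$-dimensional zero scheme, so $Z(s)$ represents $c_n(\Omega^1_X)$ and
\[
\deg Z(s)=\int_X c_n(\Omega^1_X)=(-1)^n\int_X c_n(T_X)=(-1)^n{\chi}_{\rm top}(X).
\]
As $s$ is holomorphic, its zero scheme has positive length at each of its (at least one) points, so $\deg Z(s)>0$, giving $(-1)^n{\chi}_{\rm top}(X)>0$. The main obstacle is the finiteness statement of the second paragraph: one must rule out that the general one-form vanishes along a positive-dimensional locus, and the key leverage is that the restriction of the finite map $f$ to any subvariety remains finite, hence generically immersive.
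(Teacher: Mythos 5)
Your proof is correct and takes essentially the same route as the paper's: the same incidence-variety dimension count, hinging on the fact that $f$ restricted to any subvariety $Y\subseteq X$ remains finite and hence generically unramified in characteristic $0$, then the Popa--Schnell theorem to guarantee the zero locus is nonempty, and finally positivity of the local contributions to $c_n(\Omega^1_X)$. The only differences are cosmetic (you projectivize $W$ and argue by contradiction on a dominating component, where the paper bounds $\dim Z$ directly by fibering over $X$).
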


\begin{proof}
Recall that $(-1)^n {\chi}_{\rm top}(X)$ is the top Chern class of the cotangent bundle, or, equivalently,
the intersection number of a section of the cotangent bundle and the zero section.
We will compute this by taking a generic $1$-form of $A$ and pulling it back to $X$.
We will show that its vanishing locus is zero-dimensional and nonempty, which implies that the
intersection number is positive.

First we will show that the vanishing locus is $0$-dimensional. Let
\[Z \subseteq X \times H^0 (A, \Omega^1_A)\]
be the locus of pairs of a point $x \in X$ and a one-form $\omega$ on $A$ such that $f^*\omega$ vanishes at $x$.
Let $m$ be the dimension of $A$.
Then the dimension of $Z$ is at most $m$: because it is a closed subset, it is sufficient to check that for each subvariety $Y \subseteq X$ of dimension $k$
with generic point $\eta$, the fiber $Z_\eta$ has dimension at most $m - k$. The map $f$ remains finite
when restricted to $Y$, and finite morphisms in characteristic $0$ are generically unramified, so the map\[H^0 (A, \Omega^1_A) \otimes_{\C}  \C(\eta) = \Omega^1_{A,f(\eta)} \otimes_{\C(f(\eta)) } \C(\eta)
 \to \Omega^1_{Y,\eta}\]
 from the cotangent space of $f(\eta)$ to the cotangent space of $\eta$
is surjective, hence its kernel has dimension $m - k$.
Then the kernel of the natural map
\[H^0 (A, \Omega^1_A) \otimes_{\C}  \C(\eta) \to \Omega^1_{X,\eta}\]
has dimension at most $m - k$, because it is contained in the previous kernel.
But $Z_\eta$ is precisely the affine space corresponding to this kernel, viewed as a vector space over $\C(\eta)$. So the dimension of $Z_\eta$ equals the dimension of the kernel and is at most $m-k$, and thus the dimension of $Z$ is at most $m$, as desired. Hence the vanishing locus of a generic $1$-form from $A$ is zero-dimensional.

By a result of Popa and Schnell \cite[Conjecture 1]{popaschnell}, any $1$-form on $X$ vanishes at some point.
So the vanishing locus is nonempty.
Now the Chern number $c_n (\Omega^1_X )$ is the intersection number of the zero section with this generic $1$-form.
Because the intersection consists of finitely many points, the intersection number is a sum of contributions at those points, which is $1$ if they are transverse but is always positive in general, so the total intersection number is positive.
Thus
\[(-1)^n {\chi}_{\rm top}(X) = c_n (\Omega^1_X ) > 0.\qedhere \]
\end{proof}

\begin{corA} \label{cor:smoothrtgt}
Let $X$ be a smooth projective variety of general type. Then $X$ is not a rational cohomology torus.
\end{corA}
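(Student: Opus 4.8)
The plan is to deduce Corollary \ref{cor:smoothrtgt} directly from Theorem \ref{thm:app}, exploiting the elementary fact that a rational cohomology torus has vanishing topological Euler characteristic. I would argue by contradiction: suppose $X$ is a smooth projective variety of general type, of dimension $n$, that is also a rational cohomology torus.

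First I would compute the topological Euler characteristic. Since $X$ is a rational cohomology torus, we have $H^\bullet(X,\Q)\isom\bw{\bullet}{H^1(X,\Q)}$ with $\dim_\Q H^1(X,\Q)=2n$, so the rational Betti numbers $b_k(X):=\dim_\Q H^k(X,\Q)$ coincide with those of an $n$-dimensional complex torus, namely $b_k(X)=\binom{2n}{k}$ for $0\le k\le 2n$. Hence
\[
\chi_{\rm top}(X)=\sum_{k=0}^{2n}(-1)^k\binom{2n}{k}=(1-1)^{2n}=0.
\]

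Next I would produce the finite morphism to an abelian variety needed to invoke Theorem \ref{thm:app}. By Proposition \ref{prop:albanese}, since $X$ is a rational cohomology torus there is a finite morphism $f\colon X\to A$ onto a complex torus; as $X$ is projective, $A$ is an abelian variety. Because $X$ is of general type, Theorem \ref{thm:app} applies to $f$ and yields $(-1)^n\chi_{\rm top}(X)>0$, so in particular $\chi_{\rm top}(X)\neq 0$. This contradicts the vanishing computed above, and therefore $X$ cannot be a rational cohomology torus.

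The substantive work has already been done in Theorem \ref{thm:app} (via the Popa--Schnell one-form result), so I do not expect a serious obstacle here; the only remaining inputs are the combinatorial vanishing of $\chi_{\rm top}$ for a rational cohomology torus and the existence of the finite morphism to an abelian variety, both immediate. The role of the corollary is thus simply to package the clash between the purely topological constraint $\chi_{\rm top}(X)=0$ forced by the torus cohomology ring and the positivity $(-1)^n\chi_{\rm top}(X)>0$ forced by general type.
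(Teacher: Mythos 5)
Your proposal is correct and follows essentially the same route as the paper's proof: both derive a contradiction between the vanishing of $\chi_{\rm top}(X)$ forced by the torus cohomology ring and the strict positivity $(-1)^n\chi_{\rm top}(X)>0$ from Theorem \ref{thm:app}. The only cosmetic difference is that you invoke Proposition \ref{prop:albanese} for the finite morphism to an abelian variety, while the paper cites Catanese's remark directly; both are valid sources for the same fact.
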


\begin{proof}
If it is, then by a remark of Catanese \cite[Remark 72]{Catan}, its Albanese morphism 
$ X \to A_X$
 is finite.
So by Theorem \ref{thm:app}, its topological Euler characteristic is nonzero.
But because its rational cohomology is the same as that of an abelian variety, its Euler characteristic must be the same as that of an abelian variety, which is zero.
This is a contradiction, so $X$ is not a rational cohomology torus.
\end{proof}

\addtocontents{toc}{\SkipTocEntry}
\subsection*{Acknowledgements}William F. Sawin was supported by NSF Grant No.\ DGE-1148900 when writing this appendix and would like to thank the organizers of the 2015 AMS Summer Institute on Algebraic Geometry for providing the venue where he worked on this problem.

\vspace{1cm}

\noindent \small{\textsc{Department of Mathematics, Princeton University, Fine Hall, Washington Road, NJ, USA}}\\
\small{\textit{E-mail address:} \href{mailto:wsawin@math.princeton.edu}{wsawin@math.princeton.edu}
\end{document}